\numberwithin{equation}{section}
\newcommand{\itref}[1]{\eqref{#1}}
\theoremstyle{plain}
\newtheorem{theorem}[equation]{Theorem}
\newtheorem{proposition}[equation]{Proposition}
\newtheorem{lemma}[equation]{Lemma}
\newtheorem{corollary}[equation]{Corollary}
\theoremstyle{definition}
\newtheorem{definition}[equation]{Definition}
\theoremstyle{remark}
\newtheorem{Example}[equation]{Example}
\newtheorem{remark}[equation]{Remark}
\newenvironment{example}{\begin{Example}\pushQED{\qee}}{\popQED\end{Example}}
\DeclareRobustCommand{\qee}{%
  \ifmmode \mathqee
  \else
    \leavevmode\unskip\penalty9999 \hbox{}\nobreak\hfill
    \quad\hbox{\qeesymbol}%
  \fi
}
\newcommand{\mathqee}{\quad\hbox{\qeesymbol}}
\newcommand{\qeesymbol}{\ensuremath\diamondsuit}
\newcommand{\Lie}[1]{\operatorname{\textsl{#1}}}
\newcommand{\lie}[1]{\operatorname{\mathfrak{#1}}}
\newcommand{\GL}{\Lie{GL}}
\newcommand{\gl}{\lie{gl}}
\newcommand{\sln}{\lie{sl}}
\newcommand{\SO}{\Lie{SO}}
\newcommand{\un}{\lie u}
\newcommand{\bmf}{\lie b}
\newcommand{\kf}{\lie k}
\newcommand{\n}{\lie n}
\newcommand{\p}{\lie p}
\newcommand{\tf}{\lie t}
\newcommand{\SL}{\Lie{SL}}
\newcommand{\SU}{\Lie{SU}}
\newcommand{\Un}{\Lie{U}}
\newcommand{\SUr}{\SU(2)_\textup{rotate}}
\newcommand{\ii}{\mathbf i}
\newcommand{\jj}{\mathbf j}
\newcommand{\C}{{\mathbb C}}
\newcommand{\HH}{{\mathbb H}}
\newcommand{\PP}{{\mathbb P}}
\newcommand{\R}{{\mathbb R}}
\newcommand{\Z}{{\mathbb Z}}
\newcommand{\cO}{\mathcal O}
\newcommand{\tH}{\tilde H}
\newcommand{\tT}{\tilde T}
\newcommand{\tmu}{\tilde \mu}
\newcommand{\hks}{\textup{hks}}
\newcommand{\impl}{\textup{impl}}
\newcommand{\reg}{\textup{reg}}
\newcommand{\surj}{\textup{surj}}
\newcommand{\symp}{{\sslash}} 
\newcommand{\hkq}{{\sslash\mkern-6mu/}}
\DeclareMathOperator{\diag}{diag}
\DeclareMathOperator{\dom}{dom}
\DeclareMathOperator{\End}{End}
\DeclareMathOperator{\Hom}{Hom}
\DeclareMathOperator{\im}{im}
\DeclareMathOperator{\LIE}{Lie}
\DeclareMathOperator{\rank}{rank}
\DeclareMathOperator{\Span}{Span}
\DeclareMathOperator{\Spec}{Spec}
\DeclareMathOperator{\Stab}{Stab}
\DeclareMathOperator{\tr}{tr}
\DeclareMathOperator{\vol}{vol}
\DeclarePairedDelimiter{\abs}{\lvert}{\rvert}
\newcommand{\eqbreak}[1][2]{\\&\hskip#1em}
\begin{document}

\title{Implosion for hyperk\"ahler manifolds}

\author{Andrew Dancer}
\address[Dancer]{Jesus College\\
Oxford\\
OX1 3DW\\
United Kingdom} \email{dancer@maths.ox.ac.uk}

\author{Frances Kirwan}
\address[Kirwan]{Balliol College\\
Oxford\\
OX1 3BJ\\
United Kingdom} \email{kirwan@maths.ox.ac.uk}

\author{Andrew Swann}
\address[Swann]{Department of Mathematics\\
Aarhus University\\
Ny Munkegade 118, Bldg 1530\\
DK-8000 Aarhus C\\
Denmark\\
\textit{and}\\
CP\textsuperscript3-Origins,
Centre of Excellence for Cosmology and Particle Physics Phenomenology\\
University of Southern Denmark\\
Campusvej 55\\
DK-5230 Odense M\\
Denmark} \email{swann@imf.au.dk}

\subjclass[2000]{53C26, 53D20, 14L24}

\begin{abstract}
  We introduce an analogue in hyperk\"ahler geometry of the symplectic
  implosion, in the case of $SU(n)$ actions. Our space is a stratified
  hyperk\"ahler space which can be defined in terms of quiver
  diagrams.  It also has a description as a non-reductive geometric
  invariant theory quotient.
\end{abstract}

\maketitle

\section*{Introduction}

Guillemin, Jeffrey and Sjamaar \cite{Guillemin-JS:implosion}
introduced the idea of symplectic implosion of a symplectic manifold
\( M \) with a Hamiltonian action of a compact group \( K \).  The
implosion \( M_\impl \) carries an action of a maximal torus \( T \)
of \( K \), such that the symplectic reductions of \( M \) by \( K \)
agree with the symplectic reductions of the implosion by \( T \). In
this sense the implosion is an abelianisation of the original
Hamiltonian action; the price to be paid for this is that the
implosion is usually quite singular, although it has a stratified
symplectic structure.

The construction of symplectic implosions can be reduced to the
problem of imploding the cotangent bundle \( T^*K \), which thus acts
as a universal implosion.  The imploded space \( (T^*K)_\impl \)
carries a torus action such that the symplectic reductions are the
coadjoint orbits of \( K \). The universal symplectic implosion \(
(T^*K)_\impl \) also has an algebro-geometric description as the
canonical affine completion of the quotient \( K_\C/N \) of the
complexified group \( K_\C \) by a maximal unipotent subgroup~\( N \).

Our aim here is to explore a hyperk\"ahler analogue of the universal
implosion.  In this paper we concentrate on the case of \( \SU(n) \)
actions, where there is a construction involving quiver diagrams,
leaving the case of other compact groups \( K \) to a future paper.
We produce a stratified hyperk\"ahler space \( Q \) whose strata
correspond to quiver diagrams of suitable types. These strata are
hyperk\"ahler manifolds which can be described in terms of open sets
in complex symplectic quotients of the cotangent bundle of \( K_\C =
\SL(n,\C) \) by subgroups containing commutators of parabolic
subgroups.  There is a maximal torus action, and hyperk\"ahler
quotients by this action give the Kostant varieties, which are the
closures in \( \sln(n,\C)^* \) of coadjoint orbits of \( K_\C =
\SL(n,\C) \). We recall that by the work of
Kronheimer~\cite{Kronheimer:nilpotent,Kronheimer:semi-simple},
Biquard~\cite{Biquard} and Kovalev~\cite{Kovalev} all coadjoint orbits
of complex reductive groups admit hyperk\"ahler structures.

We are led to our construction by algebro-geometric considerations,
namely, the wish to produce a variety that is an affine completion of
a complex symplectic quotient of the cotangent bundle \( T^* K_\C \)
by the maximal unipotent subgroup \( N \) of \( K_\C \).  The upshot
is that torus quotients yield not single complex coadjoint orbits but
rather their canonical affine completions which are Kostant varieties.
In particular, the torus reduction at a triple \( (0, \tau_2, \tau_3)
\), where \( \tau_2 + i \tau_3 \) is regular, will give Kronheimer's
hyperk\"ahler structure on the coadjoint orbit of \( \tau_2 + i \tau_3
\) \cite{Kronheimer:semi-simple}. However torus reduction at the
origin yields not a point (as in the symplectic case) but rather the
nilpotent variety.

Given a hyperk\"ahler manifold \( M \) with a hyperk\"ahler
Hamiltonian \( \SU(n) \) action, we can construct its hyperk\"ahler
implosion as the hyperk\"ahler quotient \( (M \times Q) \hkq \SU(n)
\), by analogy with symplectic implosion.  However the connection
between non-abelian and abelian quotients is more involved than in the
symplectic case.  While the torus quotient of the hyperk\"ahler
implosion at a triple \( (0, \tau_2, \tau_3) \) with \( \tau_2 +
i\tau_3 \) regular coincides with Kronheimer's definition of the
hyperk\"ahler reduction of \( M \) by \( \SU(n) \) at this level
\cite{Kronheimer:semi-simple}, reducing at level zero will give the
hyperk\"ahler quotient of the product of \( M \) and the nilpotent
variety.  To recover the usual hyperk\"ahler quotient we must take
just the closed stratum in this space, corresponding to the semisimple
stratum (i.e.\ the point zero) in the nilpotent variety.

We now describe the plan of the paper. In
\S \ref {sec:symplectic-implosion} we briefly review the theory of
symplectic implosion.  In \S \ref {sec:towards-hyperk-impl} we recall
some relevant points from hyperk\"ahler geometry, and introduce
various complex-symplectic spaces which will arise as ingredients for
building the hyperk\"ahler implosion.  In \S \ref {sec:kostant-varieties}
we recall the theory of Kostant varieties and the
Grothendieck-Springer resolution.  Section~\ref {sec:symplectic-quivers} shows
that we may use symplectic quivers associated to actions of products
of special linear groups to give a new model of the symplectic
implosion when \( K = \SU(n) \).  In \S \ref {sec:hyperk-quiv-diagr},
motivated by this construction, we consider hyperk\"ahler quiver
varieties and in \S \ref {sec:strat-quiv-diagr} we stratify them in terms
of quiver diagrams. This gives an approach to hyperk\"ahler implosion
in the case of \( \SU(n) \).  We analyse the structure of these strata
in terms of parabolic subgroups, and identify the implosion with a
non-reductive quotient, in \S \ref {sec:structure-strata}.  Finally in
\S8 we work out various examples and show how Kostant varieties arise
as torus quotients.

\subsubsection*{Acknowledgements.} The work of the second author was
supported by a Senior Research Fellowship of the Engineering and
Physical Sciences Research Council (grant number GR/T016170/1) during
much of this project. The third author is partially supported by the
Danish Council for Independent Research, Natural Sciences. The first
and third authors would like to thank the Banff International Research
Station for its hospitality, during the final stages of this work, at
the workshop \enquote{Geometric Structures on Manifolds}.

\section{Symplectic implosion}
\label{sec:symplectic-implosion}

We first review the theory of symplectic implosion, due to Guillemin,
Jeffrey and Sjamaar \cite{Guillemin-JS:implosion}.  Given a symplectic
manifold \( M \) with a Hamiltonian symplectic action of a compact Lie
group \( K \) with maximal torus \( T \), the imploded space \(
M_\impl \) is a stratified symplectic space with a Hamiltonian action
of the maximal torus \( T \) of \( K \).  For convenience we fix an
invariant inner product on the Lie algebra \( \kf \) of \( K \), which
allows us to identify \( \kf \) with its dual \( \kf^* \), and we fix
a positive Weyl chamber in the Cartan algebra \( \tf \) of \( T \).
Then we have an identification of reduced spaces
\begin{equation}
  M \symp_\lambda^s K   = M_\impl \symp^s_\lambda T
\end{equation}
for all \( \lambda \) in the closure of the fixed positive Weyl
chamber in \( \tf^* \), where \( \symp_\lambda^s \) denotes symplectic
reduction at level \( \lambda \).  Note that \( \lambda \) need not be
central for \( K \); we recall that for general \( \lambda \) the
symplectic reduction \( M \symp_\lambda^s K \) is the space \( (M
\times \mathsf O_{-\lambda}) \symp_0^s K \), where \( \mathsf
O_\lambda \) is the coadjoint orbit of \( K \) through \( \lambda \)
with its canonical symplectic structure. This reduction may be
identified with \( \mu^{-1}(\lambda)/{\Stab_K(\lambda)} \) where \(
\mu\colon M \to \kf^* \) is the moment map for the \( K \)-action on
\( M \) and \( \Stab_K(\lambda) \) is the stabiliser in \( K \) of \(
\lambda \in \kf^* \) under the coadjoint action of \( K \).

A particularly important example of implosion is when we take \( M \)
to be the cotangent bundle \( T^*K \) (which may be identified with \(
K_\C \)). Now \( T^*K \symp_\lambda^s K \) is just the coadjoint orbit
\( \mathsf O_\lambda \) so the imploded space must satisfy
\begin{equation*}
  (T^*K)_\impl\symp_\lambda^s T = \mathsf O_\lambda
\end{equation*}
for \( \lambda \) in the closed positive Weyl chamber \( \tf_{+}^* \).
Explicitly, \( (T^*K)_\impl \) is obtained from \( K \times \tf_{+}^*
\), by identifying \( (k_1, \xi) \) with \( (k_2, \xi) \) if \( k_1,
k_2 \) are related by the translation action of an element of the
commutator subgroup of \( \Stab_K(\xi)\). So if \( \xi \) is in the
interior of the chamber, its stabiliser is a torus and we do not
perform any identifications.  In particular an open dense subset of \(
(T^*K)_\impl \) is just the product of \( K \) with the interior of
the Weyl chamber.

In fact this example gives us a universal imploded space.  As \( T^*K
\) has a Hamiltonian \( K \times K \)-action its implosion inherits a
Hamiltonian \( K \times T \)-action.  Now for a general symplectic
manifold \( M \) with a Hamiltonian \( K \)-action we obtain the
imploded space \( M_\impl \) as the symplectic reduction \( (M \times
(T^*K)_\impl) \symp_0^s K \), which has an induced Hamiltonian \( T
\)-action as required.

Implosion also has an interpretation as an algebro-geometric quotient.
More precisely, \( (T^*K)_\impl \) can be identified with an affine
variety which is the quotient
\begin{equation*}
  K_\C \symp N = \Spec(\cO(K_\C)^N),
\end{equation*}
in the sense of geometric invariant theory (GIT), of the complex
reductive group \( K_\C \) (which is the complexification of \( K \))
by its maximal unipotent subgroup \( N \) \cite{DK,MFK}. This variety
has a stratification by quotients of \( K_\C \) by commutators of
parabolic subgroups; the open stratum is just \( K_\C/N \) and \( K_\C
\symp N \) is the canonical affine completion of the quasi-affine
variety \( K_\C / N \).

\section{Towards hyperk\"ahler implosion}
\label{sec:towards-hyperk-impl}

We now discuss some issues in constructing a hyperk\"ahler analogue of
the symplectic implosion.

\subsection{\relax} In the symplectic case the key example for
implosion was the cotangent bundle \( T^*K \) of a compact Lie group.
It was shown by Kronheimer \cite{Kronheimer:cotangent} that the
cotangent bundle \( T^* K_\C \) of the complexification of \( K \)
admits a hyperk\"ahler structure. (For further aspects of the geometry
of this space, especially the moment geometry,
see~\cite{Dancer-Swann:compact-Lie}).  In fact Kronheimer showed that
\( T^* K_\C \) may be identified with the moduli space \( \mathcal
M_{(K)} \) of solutions to the Nahm equations
\begin{equation*}
  \frac{dT_i}{dt} + [T_0 , T_i] = [T_j, T_k], \qquad \text{\( (ijk) \)
  cyclic permutation of \( (123) \),}
\end{equation*}
(that is, the anti-self-dual-Yang-Mills equations with \( \R^3 \)
translation invariance imposed) where \( T_i \) (for \( i=0,1,2,3 \))
takes values in \( \kf \) and is smooth on the interval \( [0,1] \).
Two solutions are identified if they are equivalent under the gauge
action
\begin{equation*}
  T_0 \mapsto g T_0 g^{-1} - \dot g g^{-1},\qquad 
  T_i \mapsto g T_i g^{-1} \ (i=1,2,3),
\end{equation*}
where \( g \colon [0,1] \mapsto K \) with \( g(0) = g(1) = 1 \in K
\). The Nahm equations may be viewed as the vanishing condition for a
hyperk\"ahler moment map for the action of this group of gauge
transformations on an infinite-dimensional flat quaternionic space of
\( \kf \)-valued functions on \( [0,1] \). In this way \( \mathcal
M_{(K)} \) acquires a hyperk\"ahler structure.  The complex-symplectic
structure defined by the hyperk\"ahler structure on \( \mathcal
M_{(K)} \) is just the standard complex-symplectic form on \( T^* K_\C
\).

We have an action of \( K \times K \) on \( \mathcal M_{(K)} \cong
T^* K_\C \), defined by taking gauge transformations which are no
longer constrained to be the identity at \( t=0,1 \). This action
preserves the hyperk\"ahler structure, and complexifies to a
holomorphic action of \( K_\C \times K_\C \).  Viewing \( T^*K_\C \)
as \( K_\C \times \kf_\C^* \), the left and right actions are given by
\begin{equation}
  \label{eq:action}
  (g, \xi) \mapsto (h_L g h_R^{-1}, Ad(h_R)^* \xi).
\end{equation}

There is also an action of \( \SO(3)) \), given for \( (a_{ij}) \in
\SO(3) \) by
\begin{equation*}
  T_i \mapsto \sum_{j=1}^3 a_{ij}T_j,
\end{equation*} 
which is isometric but rotates the two-sphere of complex structures
associated to the hyperk\"ahler structure, so that all the complex
structures are equivalent.  In terms of the above procedure for
identifying the moduli space with \( T^* K_\C \), the choice of
complex structures corresponds to the choice in the splitting of the
Nahm equations into a real and a complex equation. For the standard
choice of complex structures \( (\mathsf I, \mathsf J, \mathsf K) \)
with corresponding symplectic forms \( (\omega_{\mathsf
I},\omega_{\mathsf J},\omega_{\mathsf K}) \) the complex-symplectic
form is \( \omega_{\mathsf J} + \ii \omega_{\mathsf K} \).

\subsection{\relax} We have already recalled that the symplectic
implosion of \( T^*K \cong K_\C \) can be identified with the
non-reductive GIT quotient \( K_\C \symp N \) and has a stratification
into strata \( K_\C/[P,P] \) where \( P \) ranges over the standard
parabolic subgroups of \( K_\C \). The open stratum, for which \( P \)
is the Borel subgroup \( B \) of \( K_\C \) associated to the choice
of positive Weyl chamber \( \tf_+ \), is \( K_\C/N \) where \( N=[B,B]
\) is a maximal unipotent subgroup of \( K_\C \).

In the hyperk\"ahler setting, it is natural therefore to consider
complex-symplectic quotients of \( T^*K_\C \) by \( N \) and more
generally by commutators of parabolic subgroups \( P \supseteq B \),
acting on the right.

The complex-symplectic moment map for the right action of \( K_\C \)
on \( T^* K_\C \) is just the \( I \)-holomorphic moment map
\begin{equation*}
  (g, \xi) \mapsto \xi,
\end{equation*}
which of course is equivariant for the right action and invariant for
the left action as described in \eqref{eq:action}.  So the
complex-symplectic quotient by the maximal unipotent group \( N \) at
level \( 0 \) is \( K_\C \times_N \n^\circ \), where the annihilator
\( \n^\circ \) in \( \kf_\C^* \) of the Lie algebra \( \n \) of \( N
\) may be identified with the Borel subalgebra \( \bmf \) of \( \kf_\C
\). Here we use a fixed invariant inner product on \( \kf \) to
identify \( \kf \) with \( \kf^* \) and to identify \( \kf_\C \) with
\( \kf_\C^* \); when \( K=\SU(n) \) this identification is given by
the pairing \( (A,B) \mapsto \tr(AB) \) on the Lie algebra of \(
\SL(n,\C) \).

The complex-symplectic form \( \omega_{\mathsf J} + \ii
\omega_{\mathsf K} \) on \( T^* K_\C \) descends to the
complex-symplectic quotient \( K_\C \times_N \n^\circ \). We can also
perform complex-symplectic quotients by commutators \( [P,P] \) of
general parabolics \( P \).  We obtain quotients \( K_\C
\times_{[P,P]} [\p,\p]^\circ \), which may be identified with the
cotangent bundles \( T^* (K_\C/[P,P]) \) of the strata of the
symplectic implosion.

Of course, these quotients carry a complex-symplectic action of \(
K_\C \) induced from the left action on \( T^* K_\C \).  As the
maximal torus \( T_\C \) normalises \( N \) (and \( [P,P] \)) we also
have a surviving right action of \( T_\C \). So these quotients have a
complex-symplectic action of \( K_\C \times T_\C \).

\subsection{\relax} On the other hand the symplectic implosion is
given by the non-reductive GIT quotient \( K_\C \symp N \) which
contains \( K_\C/N \) as an open subset. So when searching for a
candidate for the universal hyperk\"ahler implosion we might look for
a quotient in the sense of GIT of \( K_\C \times \n^\circ \) by the
action of \( N \).  However classical GIT \cite{MFK} only deals with
actions of reductive groups, and the unipotent group \( N \) is not
reductive. There is no difficulty in constructing a non-reductive GIT
quotient \( K_\C \symp N \) of \( K_\C \) by \( N \), since the
algebra \( \cO(K_\C)^N \) of \( N \)-invariant regular
functions on \( K_\C \) is finitely generated and so we can define \(
K_\C \symp N \) to be the associated affine variety
\begin{equation*}
  K_\C \symp N = \Spec(\cO(K_\C)^N).
\end{equation*}
This means that if \( X \) is any complex affine variety on which \(
K_\C \) acts then the algebra of invariants
\begin{equation*}
  \cO(X)^N \cong (\cO(X) \otimes
  \cO(K_\C)^N)^{K_\C} 
\end{equation*}
is finitely generated and we have a non-reductive GIT quotient
\begin{equation*}
  X \symp N = \Spec(\cO(X)^N) \cong (X \times (K_\C \symp N))
  \symp K_\C.   
\end{equation*}
Unfortunately the \( N \) action
\begin{equation*}
  (g, \xi) \mapsto (g n^{-1}, n \xi n^{-1})
\end{equation*}
on \( K_\C \times \n^\circ \) does not extend to a \( K_\C \) action,
so constructing a non-reductive quotient \( (K_\C \times \n^\circ)
\symp N \) is not so straightforward (although see~\cite{DK}).
However we will prove in this paper that when \( K= \SU(n) \) the
algebra \( \cO(K_\C \times \n^\circ)^N \) is finitely
generated, and that
\begin{equation*}
  (K_\C \times \n^\circ ) \symp N = \Spec(\cO(K_\C \times
  \n^\circ)^N) 
\end{equation*}
can be identified with a hyperk\"ahler quotient of a flat space \(
\HH^m \) by a compact group action using quiver diagrams.  This
hyperk\"ahler quotient is a stratified hyperk\"ahler space with a
hyperk\"ahler torus action, and is a complex affine variety for any
choice of complex structure.  It also includes the quotients \( K_\C
\times_{[P,P]} [\p,\p]^\circ \) discussed above, in particular \( K_\C
\times_N \n^\circ \). It is also the canonical affine completion of \(
K_\C \times_N \n^\circ \).

\section{Kostant varieties}
\label{sec:kostant-varieties}

Let us now discuss some links with geometric representation theory
(cf.~\cite{Chriss-G:representation} for background).

We have already observed that we expect the universal hyperk\"ahler
implosion to be a non-reductive GIT quotient
\begin{equation*}
  (K_\C \times \n^\circ) \symp N = \Spec(\cO(K_\C \times
  \n^\circ)^N) 
\end{equation*}
with the actions of the maximal torus \( T \) of \( K \) and its
complexification \( T_\C = B/N \) induced from the action of the Borel
subgroup \( B \) on \( K_\C \times \n^\circ \). We also will be
concerned with the geometric quotient \( K_\C \times_N \n^\circ \).
In this section we shall study quotients and complex-symplectic
reductions of these spaces by tori.

First consider the space \( \tilde{\kf}_\C=K_\C \times_B \n^\circ =
K_\C \times_B \bmf \), where \( B \) is the Borel subgroup with Lie
algebra \( \bmf \).  Now \( \tilde{\kf}_\C \) may be identified via \(
(Q, X) \mapsto (QXQ^{-1}, QB/B) \) with the correspondence space
\begin{equation*}
  \{ (X, \bmf) \in \kf_\C \times {\mathcal B} : X \in \bmf \},
\end{equation*}
where \( {\mathcal B}= K_\C/B \) is the variety of Borel
subalgebras in \( \kf_\C \). Projection onto the second factor
realises \( \tilde{\kf}_\C \) as a vector bundle over \( {\mathcal B}
\).  Projection onto the first factor, on the other hand, gives a map
\begin{gather*}
  \mu \colon \tilde{\kf}_\C = K_\C \times_B \bmf \rightarrow \kf_\C, \\
  \mu \colon (Q, X) \mapsto QXQ^{-1},
\end{gather*}
called the Grothendieck simultaneous resolution.  This map is a closed
and proper surjection (since \( \mathcal B \) is compact).  Over
regular elements of~\( \kf_\C \) it is finite-to-one, of degree~\(
\abs W \), where \( W \) is the Weyl group.

We also have a map
\begin{equation*}
  \rho \colon \kf_\C \rightarrow \C^r \simeq \tf_\C / W,
\end{equation*}
where \( r = \) rank \( \kf_\C \).  This map is defined by choosing
generators \( p_1, \dots, p_r \) for the ring of invariant polynomials
on \( \kf_\C \) and setting
\begin{equation*}
  \rho (X) = (p_1 (X), \dots, p_r(X)).
\end{equation*}

Now let us fix \( X_0 \in \tf_\C \) and consider the subset of \(
\tilde{\kf}_\C \) given by
\begin{equation*}
  \tilde{\kf}_\C (X_0)= K_\C \times_B (X_0 + \n).
\end{equation*}
This has the structure of an affine bundle over \( {\mathcal B} = K_\C
/ B \) (when \( X_0 =0 \) it is the cotangent bundle \( T^*{\mathcal
B} \)).

\begin{lemma}
  \label{lem:muimage}
  We have a surjection
  \begin{equation*}
    \mu \colon  \bigcup_{w \in W}  \tilde{\kf}_\C (w. X_0) \rightarrow
    \rho^{-1} (\chi), 
  \end{equation*}
  where \( \chi = \rho(X_0) \).
\end{lemma}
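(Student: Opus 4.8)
The plan is to identify the source $\bigcup_{w\in W}\tilde{\kf}_\C(w.X_0)$ with the full preimage $\mu^{-1}(\rho^{-1}(\chi))$, and then deduce surjectivity onto $\rho^{-1}(\chi)$ from the surjectivity of the Grothendieck simultaneous resolution $\mu\colon\tilde{\kf}_\C\to\kf_\C$ recalled above.

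First I would show that $\rho\circ\mu$ depends only on the diagonal part of a Borel element. Write $\bmf=\tf_\C\oplus\n$ and let $\pi\colon\bmf\to\tf_\C$ denote the projection. For $X\in\bmf$ I claim $\rho(X)=\rho(\pi(X))$: picking $h\in\tf_\C$ with $\alpha(h)$ real and negative on every positive root, one has $\exp(sh)\,X\,\exp(-sh)\to\pi(X)$ as $s\to+\infty$, so each invariant polynomial $p_i$ agrees on $X$ and on $\pi(X)$ by continuity. Since $\mu(Q,X)=QXQ^{-1}$ and the $p_i$ are $K_\C$-invariant, this gives $\rho(\mu(Q,X))=\rho(X)=\rho(\pi(X))$. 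By the Chevalley restriction theorem the map $\tf_\C\to\C^r$ induced by the $p_i$ is precisely the quotient $\tf_\C\to\tf_\C/W$, so $\rho(\pi(X))=\chi=\rho(X_0)$ exactly when $\pi(X)\in W.X_0$, that is, when $X\in w.X_0+\n$ for some $w\in W$. As each $w.X_0+\n$ is $B$-invariant (the torus fixes $\pi(X)$ and preserves $\n$, while $[\n,\bmf]\subseteq\n$ takes care of the unipotent part), so is their union, and hence
\begin{equation*}
  \mu^{-1}(\rho^{-1}(\chi)) \;=\; K_\C\times_B\Bigl(\,\bigcup_{w\in W}(w.X_0+\n)\,\Bigr) \;=\; \bigcup_{w\in W}\tilde{\kf}_\C(w.X_0).
\end{equation*}

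With this identification in hand, surjectivity is immediate: given $Y\in\rho^{-1}(\chi)$, surjectivity of $\mu$ onto $\kf_\C$ yields $(Q,X)\in\tilde{\kf}_\C$ with $QXQ^{-1}=Y$; then $X=Q^{-1}YQ\in\bmf$ satisfies $\rho(\pi(X))=\rho(X)=\rho(Y)=\chi$, so $X\in w.X_0+\n$ for some $w$, i.e.\ $(Q,X)\in\tilde{\kf}_\C(w.X_0)$. Running the same computation in reverse shows that $\mu$ maps each $\tilde{\kf}_\C(w.X_0)$ into $\rho^{-1}(\chi)$, so the image is in fact exactly $\rho^{-1}(\chi)$.

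I do not expect a genuine obstacle here: the mathematical content is carried by two standard facts, namely that a $K_\C$-invariant polynomial is determined by the semisimple part of its argument (here, the diagonal part $\pi(X)$ of a Borel element), and the Chevalley identification of $\C[\kf_\C]^{K_\C}$ with $\C[\tf_\C]^W$, which makes $\rho$ restricted to $\tf_\C$ the quotient by $W$. The only points needing care are the $B$-equivariance used to lift the statement from $\bmf$ to the bundle $\tilde{\kf}_\C$, and the appeal to the already-established surjectivity of $\mu$, without which the statement would be a tautology.
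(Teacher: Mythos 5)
Your proof is correct and follows essentially the same route as the paper: show $\rho\circ\mu$ depends only on the Cartan component of the Borel element, then use surjectivity of $\mu\colon\tilde{\kf}_\C\to\kf_\C$ and Weyl-equivalence of Cartan parts with the same $\rho$-value for the converse. The only cosmetic differences are that you prove $p_i(X)=p_i(\pi(X))$ directly by a one-parameter limit argument where the paper cites Chriss--Ginzburg, and you make explicit the Chevalley separation of $W$-orbits and the identification of the source with the full preimage $\mu^{-1}(\rho^{-1}(\chi))$, which the paper leaves implicit.
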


\begin{proof}
  This follows from the commutative
  diagram~\cite[(3.1.41)]{Chriss-G:representation}:
  \begin{equation*}
    \begin{tikzpicture}
      \matrix [matrix of math nodes, row sep=0.8cm] {
      & |(T)| \tilde{\kf}_\C      &                \\
      |(L)| \kf_\C &                             & |(R)| \tf_\C \\
      & |(B)| \tf_\C / W = \C^r &                \\
      }; \tikzstyle{every node} = [midway,auto,font=\scriptsize]
      \draw[->] (T) -- node {\( \mu \)} (L); \draw[->] (T) --
      node[swap] {\( \nu \)} (R); \draw[->] (L) -- node {\( \rho \)}
      (B); \draw[->] (R) -- node[swap] {\( \pi \)} (B);
    \end{tikzpicture}
  \end{equation*}
  Here \( \nu \) is the map that sends \( X \) to its component \( X_0
  \) in the Cartan algebra, and \( \pi \) is just the quotient by the
  Weyl action.

  Explicitly, we can argue as follows.  If \( p_i \) is an invariant
  polynomial as above, then, letting \( X= X_0 + Y \) where \( Y \in
  \n \), we have
  \begin{equation*}
    p_i \circ \mu(Q, X) = p_i (QXQ^{-1}) = p_i(X) = p_i (X_0)
  \end{equation*}
  where the last equality comes
  from~\cite[Corollary~3.1.43]{Chriss-G:representation}. So \( \mu
  (\tilde{\kf}_\C (X_0)) \) is contained in the fibre \( \rho^{-1}
  (\chi) \) of \( \rho \), where \( \chi = \rho (X_0) \). This
  argument shows \( \mu (\tilde{\kf}_\C (w.X_0)) \) is also contained
  in \( \rho^{-1}(\chi) \), where \( w \) is an element of the Weyl
  group \( W \).

  Conversely, if \( \rho (X) = \chi \), write \( X = \mu (Q, X') = Q
  X' Q^{-1} \) for some \( (Q, X') \in K_\C \times \bmf \).  Write \(
  X' = X_{ss} + Y \), where \( X_{ss} \in \tf_\C \) and \( Y \in \n
  \). As above \( \rho(X_{ss}) = \rho(X') = \rho(X) = \chi = \rho(X_0)
  \), so \( X_{ss}, X_0 \in \tf_\C \) are equivalent under the Weyl
  group action.
\end{proof}

\medbreak We recall some facts, due to Kostant
\cite{Kostant:polynomial}, concerning the {\em Kostant varieties} \(
V_{\chi} = \rho^{-1} (\chi) \)
(see~\cite[\S6.7]{Chriss-G:representation}):
\begin{asparaenum}
\item \( V_{\chi} \) is an irreducible normal affine variety of
  complex dimension \( \dim_\C \kf_\C - r = \dim_\C \kf_\C - \dim
  \tf_\C \),
\item \( V_{\chi} \) is a union of finitely many orbits for the
  adjoint action of \( K_\C \),
\item\label{item:open-dense} there is a unique open dense orbit \(
  V_{\chi}^\reg \), and this consists of the regular elements in \(
  V_{\chi} \),
\item the complement of the open dense orbit \( V_{\chi}^\reg \) is of
  complex codimension \( \geqslant 2 \) in \( V_{\chi} \)
  \cite[Theorem 0.8]{Kostant:polynomial}, and hence \(
  \cO(V_{\chi}^\reg) \) is finitely generated and \( V_{\chi}
  \) is the canonical affine completion \(
  \Spec(\cO(V_{\chi}^\reg)) \) of the quasi-affine variety \(
  V_{\chi}^\reg \),
\item\label{item:closed-orbit}
  there is a unique closed orbit; this consists of the semisimple
  elements in \( V_{\chi} \) and has minimal dimension among the
  orbits in \( V_{\chi} \),
\item \( V_{\chi} \) consists of a single orbit if and only if it
  contains a regular semisimple element.
\end{asparaenum}

Note also that \( \dim_\C \tilde{\kf}_\C(X_0) = 2 \dim_\C \n = \dim_\C
\kf_\C - r = \dim V_{\chi} \).

\bigbreak We know \( \mu \) maps \( \tilde{\kf}_\C (X_0) \) into \(
V_{\chi} \).  But the above discussion of Lemma\nobreakspace \ref {lem:muimage} shows its
image is not contained in a proper subvariety of \( V_{\chi} \). Hence
the image contains a Zariski-open and hence dense set in \( V_{\chi}
\).  But \( \mu \) is closed on \( \tilde{\kf}_\C \) and hence on the
closed subset \( \tilde{\kf}_\C (X_0) \), so the image is all of \(
V_{\chi} \).  This map is in fact injective over regular elements (the
fibre for \( \mu \colon \tilde{\kf}_\C \rightarrow \kf_\C \) over
regulars is finite-to-one with the fibres coming from the Weyl
group). So we have

\begin{lemma}
  \label{lem:mu-resolution}
  We have a surjection
  \begin{equation*}
    \mu \colon \tilde{\kf}_\C (X_0) \rightarrow V_{\chi}
  \end{equation*}
  onto the Kostant variety, which is injective over regular elements.
\end{lemma}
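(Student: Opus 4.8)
The plan is to deduce both assertions from a single computation of the fibres of $\mu$ over the regular locus, using the material already assembled. First I would check that $\mu$ does map $\tilde{\kf}_\C(X_0)$ into $V_{\chi}$: this is the forward direction of the proof of Lemma~\ref{lem:muimage}, the invariant polynomials $p_i$ being constant equal to $p_i(X_0)$ on $X_0+\n$ and invariant under $K_\C$-conjugation. Next, since $\tilde{\kf}_\C(X_0)$ is a closed sub-bundle of $\tilde{\kf}_\C$ and the Grothendieck simultaneous resolution $\mu\colon\tilde{\kf}_\C\to\kf_\C$ is closed and proper, the image $Z:=\mu(\tilde{\kf}_\C(X_0))$ is a closed subvariety of $V_{\chi}$. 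As $V_{\chi}$ is irreducible with $\dim_\C V_{\chi}=\dim_\C\kf_\C-r=2\dim_\C\n=\dim_\C\tilde{\kf}_\C(X_0)$, and as its set $V_{\chi}^{\reg}$ of regular elements is a single dense open orbit, it then suffices to prove that $Z\supseteq V_{\chi}^{\reg}$ --- and injectivity over regular elements will fall out of the same argument.

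So the heart of the matter is the fibre of $\mu$ over a regular $X\in V_{\chi}$. The fibre of $\mu\colon\tilde{\kf}_\C\to\kf_\C$ over a point $X$ is canonically identified with the set of Borel subalgebras $\bmf'$ of $\kf_\C$ containing $X$, and such a point lies in $\tilde{\kf}_\C(X_0)$ exactly when the image of $X$ in $\bmf'/[\bmf',\bmf']$ equals $X_0$, under the identification $\bmf'/[\bmf',\bmf']\cong\tf_\C$ ($[\bmf',\bmf']$ the nilradical of $\bmf'$) that is canonical once $\bmf'$ is fixed; the constraint $\rho(X)=\chi$ only forces this image to lie in the Weyl orbit $W.X_0$. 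Writing the Jordan decomposition $X=X_{ss}+X_n$, the Borels containing $X$ are exactly those containing both $X_{ss}$ and $X_n$. The Borels containing $X_{ss}$ form a disjoint union of copies of the flag variety of the centraliser of $X_{ss}$, indexed by $W.X_0$ with the index equal to the image just discussed (this uses that $X_{ss}$ is $K_\C$-conjugate to $X_0$, since $\rho(X_{ss})=\rho(X)=\chi$). As $X_n$ is a regular nilpotent of that centraliser, imposing $X_n\in\bmf'$ cuts each of these flag varieties down to a single point, the Springer fibre of a regular nilpotent being a point. Hence, for $X$ regular, sending a Borel $\bmf'\supseteq X$ to the image of $X$ in $\bmf'/[\bmf',\bmf']$ is a bijection onto $W.X_0$; exactly one Borel containing $X$ realises $X_0$, so $\mu|_{\tilde{\kf}_\C(X_0)}$ has exactly one point over each regular $X\in V_{\chi}$. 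This gives at once that $\mu$ is injective over regular elements and that $Z\supseteq V_{\chi}^{\reg}$; being closed and containing the dense set $V_{\chi}^{\reg}$, $Z$ equals $V_{\chi}$.

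The step I expect to be the main obstacle is exactly this fibrewise analysis over the regular locus, and in particular over regular elements that are not semisimple: it rests on the description of the set of Borels containing $X_{ss}$ as a union of flag varieties of the centraliser of $X_{ss}$ indexed by $W.X_0$, and on the triviality of the Springer fibre of a regular nilpotent. Both are standard facts about the Grothendieck simultaneous resolution and Springer fibres, but they are what pins down the $\abs W$-to-one behaviour of $\mu$ over the regular elements of $\kf_\C$, on which both conclusions of the lemma rest. If one only wants surjectivity, there is a shortcut closer in spirit to the discussion before the statement: the same abstract-Cartan picture shows $\mu(\tilde{\kf}_\C(w.X_0))=\mu(\tilde{\kf}_\C(X_0))$ for every $w\in W$ --- given $Y\in\mu(\tilde{\kf}_\C(X_0))$, witnessed by a Borel containing $Y$ whose abstract Cartan sees $X_0$, one modifies this Borel, using the Jordan decomposition of $Y$ and the fact that a nilpotent always lies in some Borel of a reductive Lie algebra, to one whose abstract Cartan sees $w.X_0$ --- so that $V_{\chi}=\bigcup_{w\in W}\mu(\tilde{\kf}_\C(w.X_0))=\mu(\tilde{\kf}_\C(X_0))$ by Lemma~\ref{lem:muimage}.
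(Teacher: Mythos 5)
Your proposal is correct and follows essentially the same route as the paper: the image lies in $V_{\chi}$ by the invariant-polynomial computation, closedness of $\mu$ upgrades coverage of a dense subset to all of $V_{\chi}$, and injectivity over regular elements comes from the Weyl-group structure of the Grothendieck fibres. The only difference is one of detail: where the paper cites the discussion around Lemma~\ref{lem:muimage} for density and notes parenthetically that the regular fibres "come from the Weyl group", you make this explicit via the Jordan decomposition, the components of the set of Borels through $X_{ss}$ indexed by $W.X_0$, and the fact that the Springer fibre of a regular nilpotent is a point — a legitimate and slightly more self-contained filling-in of the same argument.
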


By analogy with symplectic implosion we expect that hyperk\"ahler
quotients of the universal hyperk\"ahler implosion by the maximal
torus \( T \) in \( K \) should be closely related to coadjoint orbits
of the complexified group \( K_\C \). Reduction at \( (0, \zeta_2,
\zeta_3) \) should correspond to the complex-symplectic quotient by \(
T_\C \) at level \( X_0 = \zeta_2 + \ii \zeta_3 \).  So let us
consider the quotients \( \tilde{\kf}_\C(X_0)= K_\C \times_B (X_0 +
\n) \) and also the GIT quotient
\begin{equation}
  \label{eq:hkq}
  (K_\C \times (X_0 + \n)) \symp B.
\end{equation}

From above \( \hat{\mu}\colon (Q, X) \mapsto QXQ^{-1} \) gives a \( B
\)-invariant map from \( K_\C \times (X_0 + \n) \) onto \( V_{\chi} \)
where \( \chi = \rho(X_0) \). This descends to the map \( \mu \colon
\tilde{\kf}_\C(X_0) = K_\C \times_B (X_0 + \n) \rightarrow V_{\chi}
\).

\medbreak If \( X_0 \) is regular semisimple, then, from
\itref{item:closed-orbit} above, \( V_{\chi} \) is just the orbit
through \( X_0 \). Moreover in this case we have an isomorphism
\begin{equation*}
  \mu \colon K_\C \times_B (X_0 + \n) \rightarrow V_\chi,
\end{equation*}
hence the fibres of \( \hat{\mu} \) are exactly the \( B \)-orbits. So
\eqref{eq:hkq} is a geometric quotient and our hyperk\"ahler quotient
is just \( V_{\chi} = K_\C /T_\C \).

\medbreak If \( X_0 \) is not regular semisimple, then \( V_{\chi} \)
will contain more than one orbit. As mentioned above in
\itref{item:open-dense}, the regular elements will form an open dense
set, but the smaller strata in its closure will consist of non-regular
elements.

Now \( \hat{\mu} \colon (Q, X) \mapsto QXQ^{-1} \) still defines a \(
B \)-invariant map from \( K_\C \times (X_0 + \n) \) onto \( V_{\chi}
\), inducing \( \mu \colon \tilde{\kf}_\C(X_0) \rightarrow V_{\chi}
\). This time \( \mu \) is no longer an isomorphism, but it is
injective over the set \( V_{\chi}^\reg \) of regular elements, hence
injective over the complement of a set of codimension \( \geqslant 2
\) in \( V_{\chi} \).

Clearly a polynomial \( f \) on \( V_{\chi} \) induces a \( B
\)-invariant polynomial \( \tilde f \) on \( K_\C \times (X_0 + \n)
\) via \( \tilde f(Q,X) = f(QXQ^{-1}) \).  Conversely, a \( B
\)-invariant polynomial on \( K_\C \times (X_0 + \n) \) will induce a
polynomial on \( \tilde{\kf}_\C(X_0) = K_\C \times_B(X_0 + \n) \)
and hence a polynomial on \( V_{\chi}^\reg \).  This will extend over
the codimension \( \geqslant 2 \) locus of non-regular points to give
a polynomial on \( V_{\chi} \)

Hence \( V_{\chi} \) is the GIT quotient \( (K_\C \times (X_0 + \n))
\symp B \). Thus if, as we expect, the universal hyperk\"ahler
implosion can be identified with a non-reductive GIT quotient \( (K_\C
\times \bmf) \symp N \), then its complex-symplectic GIT reduction by
the torus \( T_\C \) at level \( X_0 \) is \( V_\chi \).

\bigbreak The space \( \tilde{\kf}_\C (X_0) = K_\C \times_B (X_0 +
\n) \), on the other hand, is an affine bundle over \( \mathcal B \)
and is a desingularisation of the Kostant variety \( V_\chi \).

\begin{example}
  In the particular case \( X_0=0 \), then \( \tilde{\kf}_\C (X_0) \)
  is just \( K_\C \times_B \n \), which is the cotangent bundle \(
  T^* {\mathcal B} = T^* (K_\C/B) \). Now the restriction of \( \mu \)
  to \( \tilde{\kf}_\C(0) \) is the Springer resolution
  \begin{equation*}
    \mu \colon T^* {\mathcal B} \rightarrow {\mathcal N},
  \end{equation*}
  where \( {\mathcal N} = V_0 = \rho^{-1}(0) \) is the nilpotent
  variety in \( \kf_\C \).  Both these spaces appear as hyperk\"ahler
  spaces in the work of Nakajima~\cite{Nakajima:ale}.

  If \( K=\SU(2) \) then \( T^*{\mathcal B} \) is \( T^* \PP^1 \), the
  resolution of the nilpotent cone which is the GIT quotient at level
  zero (see \protect \MakeUppercase {E}xample\nobreakspace \ref {ex:SU2} below).
\end{example}

\begin{remark}
  If our candidate for the universal hyperk\"ahler implosion were the
  geometric quotient \( K_\C \times_N \n^\circ = K_\C \times_N \bmf \)
  instead of the non-reductive GIT quotient \( (K_\C \times \bmf)
  \symp N \), then a naive complex-symplectic reduction at level \(
  X_0 \) (taking a geometric rather than GIT quotient) would give the
  Springer resolution \( K_\C \times_B (X_0+\n) =
  \tilde{\kf}_\C(X_0) \) rather than the Kostant variety \( V_\chi \).
\end{remark}

\section{Symplectic quivers}
\label{sec:symplectic-quivers}

In this section we shall present a new model for the universal
symplectic implosion for \(K = \SU(n) \), in terms of \emph{symplectic
quiver representations}.  This will also introduce some ideas which
will be useful in the next section, when we introduce a quiver
description of the universal hyperk\"ahler implosion for \( \SU(n) \).

A \emph{symplectic quiver representation} is a diagram of vector
spaces and linear maps
\begin{equation}
  \label{eq:symplectic}
  0 = V_0 \stackrel{\alpha_0}{\rightarrow}
  V_1 \stackrel{\alpha_1}{\rightarrow}
  V_2 \stackrel{\alpha_2}{\rightarrow} \dots
  \stackrel{\alpha_{r-2}}{\rightarrow} V_{r-1}
  \stackrel{\alpha_{r-1}}{\rightarrow} V_r = \C^n.
\end{equation}
We say that the vector spaces \( V_i \) have dimension vector \(
\mathbf n = (n_1,\dots,n_r ) \) if \( n_i = \dim V_i \). Let
\begin{equation*}
  R(\mathbf n) = \bigoplus_{i=1}^r \Hom(V_{i-1},V_i) 
\end{equation*}
be the space of all such diagrams with \( V_i = \C^{n_i} \) for \( 1
\leqslant i \leqslant r \).  We will say that the representation is
\emph{ordered} if \( 0\leqslant n_1 \leqslant n_2 \leqslant \dots
\leqslant n_r = n \) and \emph{strictly ordered} if \( 0 < n_1
< n_2 < \dots < n_r = n \).

We shall be interested in the GIT quotient of \( R(\mathbf n) \) by an
action of
\begin{equation*}
  \SL \coloneqq \prod_{i=1}^{r-1} \SL(V_i).
\end{equation*}
This is a subgroup of \( \GL \coloneqq \prod_{i=1}^{r-1} \GL(V_i) \)
and for both groups \( g = (g_1,\dots,g_{r-1}) \) acts by
\begin{align*}
  \alpha_i &\mapsto g_{i+1} \alpha_i g_i^{-1} \quad (i = 1,\dots, r-2),\\
  \alpha_{r-1} &\mapsto \alpha_{r-1} g_{r-1}^{-1}.
\end{align*}
There is also of course a commuting action of \( \GL(n,\C) = \GL(V_r)
\) by left multiplication of \( \alpha_{r-1} \).

We shall particularly consider the case of full flag representations
\( \mathbf n = (1,2,\dots,n-1,n) \), but their analysis will require
the study of the more general quiver representations
\eqref{eq:symplectic} for general ordered~\( \mathbf n \).

The points in the GIT quotient \( R(\mathbf n) \symp \SL =
\Spec(\cO(R(\mathbf n)^{\SL})) \) correspond to the closed
orbits for the \( \SL \) action on the affine variety \( R(\mathbf n)
\).  The term \emph{polystable} is often used to describe points of \(
R(\mathbf n) \) which lie in closed \( \SL \)-orbits.  If in addition
the stabiliser is finite, the point is called \emph{stable}.

We first look at length 2 quivers.

\begin{lemma}
  \label{lem:length2}
  A length two diagram
  \begin{equation*}
    V \stackrel{\alpha}{\rightarrow} \C^n
  \end{equation*}
  gives a closed \( \SL(V) \) orbit 
if and only if \( \alpha \) is either \( 0 \)  or injective.
\end{lemma}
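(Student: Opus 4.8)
The plan is to argue by cases on the rank of $\alpha$, writing $m=\dim V$, so that $\SL(V)\cong\SL(m,\C)$ acts by $\alpha\mapsto\alpha g^{-1}$ and every point of the orbit has the same image and rank as $\alpha$.

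First I would dispatch the two ``if'' directions directly. If $\alpha=0$, the orbit is the single point $\{0\}$, which is closed. If $\alpha$ is injective (forcing $m\leqslant n$), I would fix a linear left inverse $p\colon\C^n\to V$ with $p\alpha=\Id_V$ and check that the $\SL(V)$-orbit of $\alpha$ is precisely $\{\beta\in\Hom(V,\C^n):\beta=\alpha p\beta,\ \det(p\beta)=1\}$; indeed these two conditions say exactly that $\beta=\alpha g^{-1}$ for $g=(p\beta)^{-1}\in\SL(V)$. That set is Zariski closed, being the locus $\det(p\beta)=1$ inside the linear subspace $\{\beta:\beta=\alpha p\beta\}$, so the orbit is closed.

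For the converse I would show that if $\alpha\neq 0$ and $\alpha$ is not injective then $0$ lies in the closure of the orbit but not in the orbit, so the orbit is not closed. Now $m\geqslant 2$ and $\ker\alpha\neq 0$; choosing a basis $v_1,\dots,v_m$ of $V$ with $0\neq v_m\in\ker\alpha$, let $\lambda\colon\C^*\to\SL(V)$ be the one-parameter subgroup with $\lambda(t)v_i=t^{-1}v_i$ for $i<m$ and $\lambda(t)v_m=t^{m-1}v_m$ (the exponents sum to zero, using $m\geqslant2$, so $\lambda(t)\in\SL(V)$). Then $\alpha\lambda(t)^{-1}$ sends $v_i\mapsto t\,\alpha v_i$ for $i<m$ and $v_m\mapsto t^{-(m-1)}\alpha v_m=0$, so $\alpha\lambda(t)^{-1}\to 0$ as $t\to 0$, whereas $0\notin\SL(V)\cdot\alpha$ since $\alpha\neq 0$.

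I do not expect a serious obstacle: the argument is elementary throughout. The one point needing a little care is the identification, in the injective case, of the orbit with an explicit closed subvariety rather than merely a subset of one --- the equation $\beta=\alpha p\beta$ is precisely what forces $\im\beta=\im\alpha$, and dropping it would enlarge the set whenever $m<n$. One could instead phrase both directions through the Hilbert--Mumford criterion (a point is polystable iff every one-parameter degeneration of it stays in its orbit), but the direct description above seems cleaner and makes the explicit degeneration in the non-injective case entirely transparent.
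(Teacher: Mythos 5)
Your proof is correct.  The \enquote{only if} half is essentially the paper's: when \( \alpha \neq 0 \) has nontrivial kernel you exhibit a one-parameter subgroup of \( \SL(V) \) (scaling a kernel vector against the rest of a basis) driving \( \alpha \) to \( 0 \), which is not in the orbit since rank is constant on the orbit; the paper does the same with the block one-parameter subgroup \( \diag(\lambda^{-k},\lambda^{\ell}) \) adapted to \( V = \ker\alpha \oplus U \).  The \enquote{if} half for injective \( \alpha \) is where you genuinely diverge.  The paper decomposes \( V \) into weight spaces of an arbitrary one-parameter subgroup and invokes the Mumford numerical criterion to conclude that \( \alpha \) is stable, hence polystable.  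You instead pick a left inverse \( p \) with \( p\alpha = \Id_V \) and identify the orbit with the explicit Zariski-closed locus \( \{\beta : \beta = \alpha p\beta,\ \det(p\beta)=1\} \), so closedness is immediate and no GIT machinery is needed; note also that \( g = (p\beta)^{-1} \) is determined by \( \beta \), so triviality of the stabiliser (hence stability) falls out of your description as well.  What the paper's formulation buys is reusability: the one-parameter-subgroup argument \enquote{of the last section of Lemma \ref{lem:length2}} is cited again in the proof of Lemma \ref{lem:length2-double} and in the analysis of full flag quivers in \S\ref{sec:symplectic-quivers}, where only stability (not an explicit orbit description) is needed; your argument proves the lemma as stated just as well, and arguably more transparently.
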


\begin{proof}
  Write \( V = \ker\alpha \oplus U \) for some \( U \).  

  Then
  \begin{equation*}
    \alpha =
    \begin{pmatrix}
      0 & a_2
    \end{pmatrix} \colon \ker\alpha \oplus U \to \C^n
  \end{equation*}
  which transforms as
  \begin{equation*}
    \begin{pmatrix}
      0 & a_2
    \end{pmatrix}
    \mapsto
    \begin{pmatrix}
      0 & a_2g_2^{-1}
    \end{pmatrix}
  \end{equation*}
  under the action of \( g= \diag(g_1,g_2) \in \SL(V) \).  If \(
  \ker\alpha\ne 0 \) and \( U\ne 0 \), then \(
  \diag(\lambda^{-k},\lambda^\ell) \in \SL(V) \) where \( \ell = \dim
  \ker\alpha \) and \( k=\dim U \), and the corresponding
  one-parameter group has \( 0 \) as its limit.  Thus, if the orbit is
  closed, either \( \ker\alpha=0 \) and \( \alpha \) is injective or
  \( \alpha = 0 \).

  If \( \alpha =0 \) then the orbit is clearly closed.  If \( \alpha
  \) is injective, consider the action of any one-parameter group of
  \( \SL(V) \) (in the sense of GIT, i.e.\ the image of a homomorphism
  \( \C^* \to \SL(V) \)).  Write \( V = \bigoplus_{i=1}^r E(\mu_i) \)
  as a direct sum of weight spaces.  As the one-parameter group is in
  \( \SL(V) \) we have \( \sum_i\mu_i\dim E(\mu_i) = 0 \).  Now \(
  \alpha = \bigoplus_{i=1}^r a_i \) with each \( a_i \) non-zero and
  under the action of the one-parameter group we have \( a_i \mapsto
  \lambda^{\mu_i} a_i \).  This has a limit as \( \lambda\to\infty \)
  only if \( \mu_i\leqslant0 \) for each \( i \).  But the special
  linear condition then gives \( \mu_i = 0 \) for each~\( i \).  Thus
  by the Mumford numerical criterion~\cite{MFK} \( \alpha \) is stable
  and hence polystable.
\end{proof}

To deal with the general case, we consider the following length~\( 2 \)
situation for a double quiver. Spaces of such double quivers will also
give hyperk\"ahler varieties, and will be studied further in the next
section.

\begin{lemma}
  \label{lem:length2-double}
  A configuration
  \begin{equation}
    \label{eq:length2-double}
    V \stackrel[\beta]{\alpha}{\rightleftarrows} W
  \end{equation}
  of vector spaces \( V,W \) and linear maps \( \alpha , \beta \) gives a
  closed orbit under the \( \SL(V) \)-action
  \begin{equation*}
    \alpha \mapsto \alpha g^{-1}\qquad \beta \mapsto g\beta
  \end{equation*}
  if and only if
  \begin{enumerate}
  \item \( \alpha \) is injective, or
  \item \( \beta \) is surjective, or
  \item \( V = \ker\alpha \oplus \im\beta \).
  \end{enumerate}
\end{lemma}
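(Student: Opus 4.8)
The plan is to use the Hilbert--Mumford numerical criterion, as in the proof of Lemma~\ref{lem:length2}, working one-parameter subgroup by one-parameter subgroup. Given a one-parameter subgroup of $\SL(V)$ we decompose $V = \bigoplus_{i} E(\mu_i)$ into weight spaces with weights $\mu_1 > \mu_2 > \dots > \mu_s$, so that $\sum_i \mu_i \dim E(\mu_i) = 0$; as $\lambda \to \infty$ the map $\alpha$ picks up the factor $\lambda^{-\mu_i}$ on the block mapping out of $E(\mu_i)$, while $\beta$ picks up $\lambda^{\mu_i}$ on the block mapping into $E(\mu_i)$. The limit as $\lambda \to \infty$ exists precisely when $\alpha|_{E(\mu_i)} = 0$ for all $i$ with $\mu_i < 0$ and $\beta$ has image contained in $\bigoplus_{\mu_i \geqslant 0} E(\mu_i)$ (i.e.\ the component of $\beta$ into $E(\mu_i)$ vanishes whenever $\mu_i < 0$). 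First I would record that the orbit is closed if and only if for every such one-parameter subgroup whose limit exists, the limit already lies in the same orbit --- equivalently, by the usual argument, the limit point has the same orbit dimension, which forces the one-parameter subgroup to fix the point up to the torus it generates; concretely one checks whether $0$ (or some lower-dimensional degeneration) lies in the orbit closure.

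The heart of the argument is to show that a destabilising one-parameter subgroup exists if and only if all three conditions (1)--(3) fail. For the easy direction: if $\alpha$ is not injective, pick a nonzero subspace $A \subseteq \ker\alpha$; if $\beta$ is not surjective, pick a proper subspace $B \supseteq \im\beta$; and if $V \neq \ker\alpha \oplus \im\beta$ then either $\ker\alpha \cap \im\beta \neq 0$ or $\ker\alpha + \im\beta \neq V$. In each of these situations I want to produce weights: put large negative weight on a line inside $\ker\alpha$ that is not forced to carry positive weight (so $\alpha$ still has a limit), large positive weight compensating on a complement of a subspace containing $\im\beta$, and zero elsewhere, arranging $\sum \mu_i \dim E(\mu_i) = 0$; the resulting limit strictly degenerates the configuration (it kills a genuine off-diagonal block of $\alpha$ or $\beta$, or collapses the splitting), so the orbit is not closed. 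The case analysis here --- showing that failure of all three conditions always leaves ``room'' to build a non-balancing, limit-admitting grading --- is the main obstacle, and I expect it to require a careful choice: when $\ker\alpha \cap \im\beta \neq 0$ one weights a line in the intersection positively \emph{and} a disjoint line in $\ker\alpha$ negatively (using $\dim\ker\alpha \geqslant \dim(\ker\alpha\cap\im\beta) + 1$, which holds exactly because the splitting fails and $\alpha$ is not injective), and symmetrically when $\ker\alpha + \im\beta \neq V$.

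For the converse --- that each of (1)--(3) forces the orbit closed --- I would argue as follows. If $\alpha$ is injective, then for any one-parameter subgroup with $\lim_{\lambda\to\infty}$ existing we need $\alpha|_{E(\mu_i)} = 0$ for $\mu_i < 0$, but injectivity of $\alpha$ forces $E(\mu_i) = 0$ for $\mu_i < 0$; combined with $\sum \mu_i \dim E(\mu_i) = 0$ this gives $\mu_i = 0$ for all $i$, so the one-parameter subgroup acts trivially and cannot move the point out of its orbit --- hence (by Hilbert--Mumford for closed orbits, Kempf's version) the orbit is closed. The case $\beta$ surjective is dual: existence of the limit forces $\beta$ to land in $\bigoplus_{\mu_i \geqslant 0} E(\mu_i)$, surjectivity then forces $E(\mu_i) = 0$ for $\mu_i < 0$, and again all weights vanish. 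Finally, when $V = \ker\alpha \oplus \im\beta$, a limit-admitting one-parameter subgroup must put $\ker\alpha \subseteq \bigoplus_{\mu_i \leqslant 0} E(\mu_i)$ and $\im\beta \subseteq \bigoplus_{\mu_i \geqslant 0} E(\mu_i)$; since these two subspaces span $V$ the only weights that can occur on $\ker\alpha$ are $\leqslant 0$ and on $\im\beta$ are $\geqslant 0$, and here one must check that the limit point is still in the $\SL(V)$-orbit of the original --- this follows because the induced grading respects the direct sum decomposition, so the limiting configuration differs from the original only by a block-diagonal element of $\GL(V)$, which after rescaling to determinant one lies in $\SL(V)$ and connects the two, whence the orbit closure contains no new orbits. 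Assembling the two directions gives the claimed equivalence.
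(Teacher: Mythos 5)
Your overall strategy (one-parameter subgroups together with the criterion that a reductive orbit is closed if and only if every existing one-parameter limit stays in the orbit) is the same as the paper's, and your cases (1) and (2) are essentially the paper's appeal to Lemma~\ref{lem:length2} and its dual. But the two places you yourself flag as delicate both contain genuine errors, starting from a sign slip: for the action \( \alpha\mapsto\alpha g^{-1} \), \( \beta\mapsto g\beta \), the limit as \( \lambda\to\infty \) exists if and only if \( \bigoplus_{\mu_i<0}E(\mu_i)\subseteq\ker\alpha \) \emph{and} \( \im\beta\subseteq\bigoplus_{\mu_i\leqslant 0}E(\mu_i) \) (not \( \mu_i\geqslant0 \)), and in the limit it is \( \alpha \) on the positive weight spaces and the components of \( \beta \) into the negative weight spaces that are killed. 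This breaks your necessity argument in its key sub-case: putting positive weight on a line \( L\subseteq\ker\alpha\cap\im\beta \) destroys the existence of the limit (and even ignoring signs it degenerates nothing, since \( \alpha|_L=0 \) already), and the inequality \( \dim\ker\alpha\geqslant\dim(\ker\alpha\cap\im\beta)+1 \) you rely on is false: take \( \ker\alpha=\im\beta \) a proper nonzero subspace of \( V \), where (1)--(3) all fail yet no line of \( \ker\alpha \) disjoint from the intersection exists. The correct construction (the paper's) splits \( V=U_1\oplus U_2\oplus U_3\oplus U_4 \) with \( U_1=\ker\alpha\cap\im\beta \), \( \ker\alpha=U_1\oplus U_2 \), \( \im\beta=U_1\oplus U_3 \), and weights \( U_1 \) \emph{negatively} and \( U_2\oplus U_4 \) positively when \( U_1\neq0 \) (possible since \( U_2\oplus U_4\neq0 \) as \( \beta \) is not surjective), respectively \( U_2 \) negatively and \( U_4 \) positively when \( U_1=0 \) (then \( U_2,U_4\neq0 \) because (1) and (3) fail); in either case the limit exists and has strictly smaller \( \rank\beta \) or \( \rank\alpha \), so it leaves the orbit.

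The second and more serious gap is case (3) of sufficiency, which is exactly the part of the lemma not already contained in Lemma~\ref{lem:length2} and its dual. Your constraints on a limit-admitting one-parameter subgroup are reversed, and neither of your two key claims holds: the weight grading need not respect the splitting \( V=\ker\alpha\oplus\im\beta \), and the limit need not differ from the original by a block-diagonal element. Concretely, take \( V=\C^3 \), \( W=\C^2 \), \( \alpha(e_1)=\alpha(e_2)=0 \), \( \alpha(e_3)=f_1 \), \( \beta(f_1)=0 \), \( \beta(f_2)=e_3 \), and the one-parameter subgroup with \( E(-1)=\Span\{e_1\} \), \( E(0)=\Span\{e_1+e_3\} \), \( E(1)=\Span\{e_2\} \): the limit exists and equals \( (\alpha,\beta') \) with \( \beta'(f_2)=e_1+e_3 \), the grading does not respect the splitting, and the limit is reached from \( (\alpha,\beta) \) only by a unipotent, non-block-diagonal element of \( \SL(V) \). (The limit is still in the orbit, as the lemma asserts, but your argument does not show it.) A correct treatment, as in the paper, uses the \( \SL(V) \)-invariant \( X=\alpha\beta \): semicontinuity of rank and \( \alpha'\beta'=X \) force \( \rank\alpha'=\rank\alpha \), \( \rank\beta'=\rank\beta \) and \( \ker\beta'=\ker\beta=\ker X \) (this is where \( V=\ker\alpha\oplus\im\beta \) enters), whence \( \beta'=g\beta \) for some \( g\in\SL(V) \), and one then matches \( \alpha' \) with \( \alpha \) on \( \im\beta \) and adjusts by an element acting on a complement. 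Some argument of this kind is needed; as written, case (3) is unproven.
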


\begin{proof}
  Suppose \( (\alpha,\beta) \) is \( \SL \)-polystable.  Choose a
  direct sum decomposition \( V = U_1\oplus U_2\oplus U_3 \oplus U_4
  \) such that
  \begin{equation*}
    \ker\alpha \cap \im\beta = U_1,\quad \ker\alpha = U_1 \oplus U_2\quad
    \text{and}\quad\im\beta = U_1 \oplus U_3.
  \end{equation*}
  Then we may write
  \begin{equation*}
    \alpha =
    \begin{pmatrix}
      0&0&a_3&a_4
    \end{pmatrix}
    , \quad
    \beta =
    \begin{pmatrix}
      b_1\\0\\b_3\\0
    \end{pmatrix}
    .
  \end{equation*}
  A block-diagonal element \( \diag(g_1,\dots,g_4) \in \SL(V) \) acts
  as
  \begin{equation*}
    \alpha \mapsto
    \begin{pmatrix}
      0&0&a_3 g_3^{-1}&a_4 g_4^{-1}
    \end{pmatrix}
    , \quad
    \beta \mapsto
    \begin{pmatrix}
      g_1b_1\\0\\g_3b_3\\0
    \end{pmatrix}
    .
  \end{equation*}
  If \( U_2 \) is non-zero, then we may choose \( g_1 \in \GL(U_1) \)
  and \( g_4 \in \GL(U_4) \) freely and take \( g_2 = \lambda \), \(
  g_3 = 1 \), where \( \lambda^k\det g_1\det g_4 = 1 \), and \( k =
  \dim U_2 \).  In particular, we may choose a one-parameter group of
  this form such that \( g_1 \to 0 \) and \( g_4^{-1} \to 0 \).
  Closure of the orbit implies that \( b_1 = 0 \) and \( a_4
  = 0 \).  But \( b_1 \) is onto and \( a_4 \) is injective,
  so \( U_1 = 0 = U_4 \) and \( V = \ker\alpha \oplus \im\beta \).

  If \( U_2 = 0 \) and \( U_1, U_4 \) are both non-zero, then we may
  consider \( g = \diag(g_1,\dots,g_4) \in \SL(V) \) with \( g_3=1 \).
  Now if \( g_1 \to 0 \), we have \( g_4^{-1} \to 0 \) too and closure
  of the orbit implies \( b_1 = 0 = a_4 \), contradicting the
  assumption that \( U_1,U_4 \) are non-zero.

  So we see that \( U_2 = 0 \) implies either \( U_1 = U_2 = 0 \),
  giving \( \alpha \) injective, or \( U_2 = U_4 =0 \), giving \(
  \beta \) surjective.

  We note that the proof of Lemma\nobreakspace \ref {lem:length2} shows that the case
  when \( \alpha \) is injective gives a stable configuration.  For
  the case when \( \beta \) is surjective, note that stability
  of~\eqref{eq:length2-double} is equivalent to stability of the dual
  diagram
  \begin{equation*}
    V^* \stackrel[\alpha^*]{\beta^*}{\rightleftarrows} W^*
  \end{equation*}
  and that \( \beta \) surjective is equivalent to \( \beta^* \)
  injective.

  Finally, for stability of the mixed case when \( V = \ker\alpha
  \oplus \im\beta \), we may assume \( \ker\alpha \) is non-trivial
  and \( \beta \) is not surjective.  Consider the endomorphism \( X =
  \alpha\beta \) of \( W \).  This is invariant under the action of \(
  \SL(V) \) and we have \( \rank X = \rank\alpha = \rank\beta \).

  Suppose \( g_t \in \SL(V) \) is a one-parameter group and that \(
  g_t(\alpha,\beta) \to (\alpha',\beta') \).  We have \(
  \rank\alpha'\leqslant\rank\alpha \) and \(
  \rank\beta'\leqslant\rank\beta \).  But \( \alpha'\beta' = X \), so
  \( \rank X \leqslant \min\{\rank \alpha', \rank \beta'\} \) giving
  \( \rank\alpha' = \rank\alpha \) and \( \rank\beta' = \rank\beta \).
  Also observe that \( \ker \beta' = \ker \beta = \ker X \), by our
  direct sum decomposition.  It now follows that \( \beta' = g\beta \)
  for some \( g \in \GL(V) \).  Since \( \beta \) is not surjective,
  we may in fact choose \( g \in \SL(V) \).  Now \( X = \alpha\beta =
  \alpha'\beta' = \alpha'g\beta \) implies \( \alpha = \alpha' g \) on
  \( \im\beta \).  But \( \alpha \) and \( \alpha' \) have the same
  rank as \( \beta \), and \( \alpha \) is injective on \( \im\beta
  \), so \( \ker(\alpha'g) \) is a complementary subspace to \(
  \im\beta \).  Choosing \( h \in \SL(V) \) extending the identity on
  \( \im\beta \) and with \( h\ker\alpha = \ker(\alpha'g) \), we get
  that \( \alpha' = \alpha (gh)^{-1} \), \( \beta' = gh \beta \) and
  so \( (\alpha',\beta') \) lies in the \( \SL(V) \)-orbit of \(
  (\alpha,\beta) \).
\end{proof}

\begin{theorem}
  \label{thm:symp-stab}
  The symplectic quiver representation \( \alpha \in R(\mathbf n)
  \)~\eqref{eq:symplectic} is \( \SL \)-polystable only if at each
  stage \( i=1,\dots,r-1 \) we have either
  \begin{compactenum}
  \item \( \alpha_i \) is injective, or
  \item \( V_i = \im\alpha_{i-1} \oplus \ker\alpha_i \), or
  \item \( \alpha_{i-1} \) is surjective.
  \end{compactenum}
\end{theorem}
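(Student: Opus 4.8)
The plan is to deduce the theorem from the ``only if'' direction of Lemma~\ref{lem:length2-double}, applied one vertex at a time. Assume $\alpha=(\alpha_1,\dots,\alpha_{r-1})$ is $\SL$-polystable, so the orbit $\SL\cdot\alpha$ is closed, and fix a stage $i\in\{1,\dots,r-1\}$. Relative to the subgroup $\SL(V_i)\subseteq\SL$ acting trivially on the remaining factors, $\alpha$ only involves the incoming arrow $\alpha_{i-1}\colon V_{i-1}\to V_i$ and the outgoing arrow $\alpha_i\colon V_i\to V_{i+1}$, which transform by $\alpha_{i-1}\mapsto g\alpha_{i-1}$ and $\alpha_i\mapsto\alpha_i g^{-1}$ for $g\in\SL(V_i)$ (here $\alpha_0=0$ if $i=1$, and $V_{i+1}=\C^n$ if $i=r-1$). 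This is exactly the set-up of Lemma~\ref{lem:length2-double} under the dictionary $\alpha\leftrightarrow\alpha_i$, $\beta\leftrightarrow\alpha_{i-1}$, $V\leftrightarrow V_i$; the precise sources and targets of the arrows are immaterial, since the polystability analysis in that lemma takes place entirely inside $V_i$, in terms of the subspaces $\ker\alpha_i$ and $\im\alpha_{i-1}$.

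The point needing care is that $\SL$-polystability of $\alpha$ does not by itself make the $\SL(V_i)$-orbit closed, so Lemma~\ref{lem:length2-double} cannot be quoted verbatim. What rescues the argument is that only a weak consequence is needed: the destabilising one-parameter subgroups occurring in the proof of that lemma all lie in $\SL(V_i)$, and in the relevant cases the limit $\alpha'=\lim_{t\to0}\lambda(t)\cdot\alpha$ exists; since $\SL\cdot\alpha$ is closed, $\alpha'\in\overline{\SL\cdot\alpha}=\SL\cdot\alpha$, say $\alpha'=g\cdot\alpha$ with $g\in\SL$. Because the $\SL$-action on each arrow is pre- and post-composition with linear isomorphisms, this forces $\rank\alpha'_{i-1}=\rank\alpha_{i-1}$ and $\rank\alpha'_i=\rank\alpha_i$, and these rank identities are precisely what the argument of Lemma~\ref{lem:length2-double} exploits.

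Concretely, I would decompose $V_i=U_1\oplus U_2\oplus U_3\oplus U_4$ with $\ker\alpha_i\cap\im\alpha_{i-1}=U_1$, $\ker\alpha_i=U_1\oplus U_2$ and $\im\alpha_{i-1}=U_1\oplus U_3$, and write $\alpha_{i-1},\alpha_i$ in the block form used there. If $U_2\neq0$, the one-parameter subgroup of $\SL(V_i)$ that degenerates to $0$ on $U_1$ and whose inverse degenerates to $0$ on $U_4$ (with the $U_2$-block chosen to keep the determinant $1$) has a limit in which $\alpha_i$ is supported on $U_3$ and $\im\alpha_{i-1}\subseteq U_3$; the rank identities then force $U_1=U_4=0$, i.e.\ $V_i=\im\alpha_{i-1}\oplus\ker\alpha_i$, which is case~(ii). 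If $U_2=0$ and both $U_1$ and $U_4$ were non-zero, the corresponding one-parameter subgroup would yield a limit with $\rank\alpha_i$ strictly smaller, contradicting the rank identity; hence $U_1=0$, giving $\alpha_i$ injective (case~(i)), or $U_4=0$, giving $\alpha_{i-1}$ surjective onto $V_i$ (case~(iii)). As $i$ was arbitrary, the required trichotomy holds at every stage.

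The only genuine obstacle is this localisation step: legitimising the passage from a closed orbit for the product group $\SL=\prod_i\SL(V_i)$ to a statement about a single vertex, in spite of the fact that the orbit of a polystable point under a proper subgroup need not be closed. The resolution is that one never needs the localised orbit to be closed---it is enough that a one-parameter subgroup limit always lies in the ambient closed orbit $\SL\cdot\alpha$, which forces equality of ranks arrow by arrow and so reduces everything to the rank computations already carried out in Lemma~\ref{lem:length2-double}. The boundary stages $i=1$ (where $\alpha_{i-1}=0$, so $\im\alpha_{i-1}=0$) and $i=r-1$ (where $V_{i+1}=\C^n$) need no separate treatment.
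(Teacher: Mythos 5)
Your argument is correct and is essentially the paper's: the paper proves necessity by applying Lemma~\ref{lem:length2-double} at each vertex with \( V=V_i \), \( W=V_{i+1}\oplus V_{i-1} \), \( \alpha=(\alpha_i,0) \) and \( \beta=(0,\alpha_{i-1}) \), which is exactly the block analysis of \( \ker\alpha_i \) and \( \im\alpha_{i-1} \) that you carry out stage by stage. The localisation issue you isolate is passed over silently in the paper---quoting the lemma verbatim needs the \( \SL(V_i) \)-orbit to be closed, which does hold here because \( \SL(V_i) \) is a normal (direct) factor of the reductive group \( \SL \), so polystability descends to it---and your substitute, taking the one-parameter-subgroup limits inside the closed \( \SL \)-orbit and using that the \( \SL \)-action preserves the rank of each arrow, is a correct, self-contained way around this point.
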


Note that if \( \mathbf n \) is strictly ordered, then the final
possibility cannot occur.

\begin{proof}
  The necessity follows from Lemma\nobreakspace \ref {lem:length2-double} applied to \( V
  = V_i \), \( W = V_{i+1} \oplus V_{i-1} \) and the maps \( \alpha =
  (\alpha_i,0) \), \( \alpha(x) \coloneqq (\alpha_i(x), 0) \) and \(
  \beta = (0,\alpha_{i-1}) \), \( \beta(x,y) \coloneqq \alpha_{i-1}(y)
  \).
\end{proof}

We shall now consider full flag quivers; that is, those where \( V_i =
\C^i \) for \( i=1, \dots, n \).  To describe the GIT quotient by \(
\SL = \prod_{i=2}^{n-1}\SL(i,\C) \) we need to analyse the quotient by
\( \SL \) of the set of such quivers satisfying the conditions given
by Theorem\nobreakspace \ref {thm:symp-stab}; in the course of the analysis we shall see
that the condition of Theorem\nobreakspace \ref {thm:symp-stab} is sufficient as well as
necessary for polystability.

First, observe that we may decompose each vector space \( \C^i \) as
\begin{equation}
  \label{symp:decomp}
  \C^i = \ker \alpha_i \oplus \C^{m_i},
\end{equation}
where \( \C^{m_i} = \C^i \) if \( \alpha_i \) is injective and we take
\( \C^{m_i} = \im \alpha_{i-1} \) otherwise. We put \( m_n = n \).
Note that rank \( \alpha_i = m_i \) for \( 1 \leqslant i \leqslant n-1
\). Further, observe that this actually gives a decomposition of our
quiver into two subquivers, namely
\begin{equation}
  \label{symp:nontriv}
  \C^{m_1} \stackrel{\bar{\alpha}_1}{\rightarrow}
  \C^{m_2} \stackrel{\bar{\alpha}_2}{\rightarrow} \dots
  \stackrel{\bar{\alpha}_{n-2}}{\rightarrow} \C^{m_{n-1}}
  \stackrel{\bar{\alpha}_{n-1}}{\rightarrow} \C^n,
\end{equation}
where \( \bar{\alpha}_i \) denotes the restriction of \( \alpha_i \)
to \( \C^{m_i} \), and the quiver with trivial maps
\begin{equation*}
  \ker \alpha_1 \stackrel{0}{\rightarrow} \ker \alpha_2
  \stackrel{0}{\rightarrow} \dots \stackrel{0}{\rightarrow} \ker
  \alpha_{n-1} \stackrel{0}{\rightarrow} 0.
\end{equation*}
Note that we may use the \( \SL \) action to standardise the
decomposition \eqref{symp:decomp}, and we have a residual action of \(
\prod_{i=1}^{n-1} \Lie S(\GL(m_i,\C) \times \GL(i-m_i,\C)) \)
preserving the decomposition.  The \( \GL(i-m_i,\C) \) action on the
quiver with zero maps is trivial.

If \( \alpha_i \) is not injective, then, since \( \C^i = \ker
\alpha_i \oplus \im \alpha_{i-1} \) we see that \( \rank \alpha_i =
\rank \alpha_{i-1} \).  We deduce that
\begin{alignat}{3}
  \label{eq:mj1}
  m_i &= i, &\quad& \text{if \( \alpha_i \) is  injective,} \\
  \label{eq:mj2}
  m_i &= m_{i-1}, &&  \text{if \( \alpha_i \) is not injective.}
\end{alignat}

Now all the information is contained in the quiver
\eqref{symp:nontriv} with \( \SL(m_i,\C) \) acting for \( m_i = i \)
and \( \GL(m_i,\C) \) acting for \( m_i<i \).  All the restricted maps
\( \bar{\alpha}_i \) are injective, so \( m_{i-1} \leqslant m_i \) for
each \( i \).  When \( m_i = m_{i-1} \), this gives \( m_i \leqslant
i-1 < i \), so we have a \( \GL(m_i,\C) \) action on \( \C^{m_i} \).  We
may use up this action by standardising \( \bar{\alpha}_i \) to be the
identity.  We may therefore remove this edge of the quiver, a process
we call \emph{contraction}.

After performing all such contractions, we arrive at a length \( r \)
quiver where \( m_1 < m_2 < \dots < m_{r-1} < m_r = n \) and all maps
\( \bar{\alpha}_i \) are injective. The residual action is \(
\prod_{i=1}^{r-1} \SL(m_i,\C) \).  For each \( j \), we have \(
\bar{\alpha}_{r-1} \bar{\alpha}_{r-2} \dots \bar{\alpha}_j g_j^{-1} =
(\bar{\alpha}_{r-1} g_{r-1}^{-1}) \prod_{i=r-2}^j g_{i+1}
\bar{\alpha}_i g_i^{-1} \), so if this quiver tends to a limit under
the action of a one-parameter subgroup \( (g_i(t))_{i=1}^{r-1} \),
then the injective map \( \bar{\alpha}_{r-1} \bar{\alpha}_{r-2} \dots
\bar{\alpha}_j g_j^{-1}(t) \) must tend to a limit also. Now the
argument of the last section of Lemma\nobreakspace \ref {lem:length2} shows that \(
g_j(t) \) is trivial.  So there are no destabilising one-parameter
subgroups and we have that the quiver is polystable.

Alternatively, we may use the action of \( \SL(n,\C) \times
\prod_{i=1}^{r-1} \SL(m_i,\C) \) to put the maps \( \bar{\alpha}_i \)
into a form where the only non-zero entries are the \( (j,j) \) terms
for \( j = 1,\dots,m_i \). Moreover the \( (j,j) \) terms may be
chosen to be arbitrary non-zero scalars. It is now straightforward to
check that these scalars may be chosen so that the real moment map for
the action of \( \prod_{i=1}^{r-1} \SU(m_i) \) vanishes, i.e.\ so that
the trace-free part of \( \alpha_{i-1} \alpha_{i-1}^* - \alpha_i^*
\alpha_i \) is zero for \( i=1, \dots, r-1 \). Thus there exists some
\( h \in \SL(n,\C) \) such that the \( \prod_{i=1}^{r-1} \SL(m_i,\C)
\) orbit through the image of the quiver under the action of \( h \)
is closed, and it follows that the orbit through the original quiver
is closed too. So the condition of Theorem\nobreakspace \ref {thm:symp-stab} is sufficient
for polystability.

Our discussion now shows that we have a stratification of the GIT
quotient by \( \prod_{i=2}^{n-1} \SL(i,\C) \) of the space of full flag
quivers. There are \( 2^{n-1} \) strata, corresponding to the strictly
increasing sequences of positive integers ending with \( n \), or
equivalently to the ordered partitions of \( n \). We may also of
course view the strata as being indexed by the standard parabolic
subgroups of \( \SL(n,\C) \).

We next analyse these strata.

\begin{lemma}
  \label{lem:sympstrata}
  If \( 0 < n_1 < n_2 < \dots < n_r = n \) then the space of
  quivers
  \begin{equation*}
    0 \rightarrow V_1\stackrel{\bar{\alpha}_1}{\rightarrow}
    V_2\stackrel{\bar{\alpha}_2}{\rightarrow} \dots
    \stackrel{\bar{\alpha}_{r-2}}{\rightarrow} V_{r-1}
    \stackrel{\bar{\alpha}_{r-1}}{\rightarrow} V_r = \C^n
  \end{equation*}
  with \( \dim V_j = n_j \) and all \( \bar{\alpha}_i \) injective,
  modulo the action of \( \SL = \prod_{i=1}^{r-1} \SL(V_i) \), is
  \begin{equation*}
    \SL(n,\C)/[P,P],
  \end{equation*}
  where \( P \) is the parabolic associated to the flag \( (V_1,
  \dots, V_r) \).

  This statement also holds for quivers
  \begin{equation*}
    0 \leftarrow V_1\stackrel{\beta_1}{\leftarrow}
    V_2\stackrel{\beta_2}{\leftarrow} \dots
    \stackrel{\beta_{r-2}}{\leftarrow} V_{r-1}
    \stackrel{\beta_{r-1}}{\leftarrow} V_r = \C^n,
  \end{equation*}
  where the maps \( \beta_i \colon V_{i+1} \to V_i \) are surjective.
\end{lemma}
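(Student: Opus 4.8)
The plan is to identify the quotient space concretely by using the injective maps to reconstruct a flag, and then to match the fibres of this identification with the $\SL$-orbits. First I would observe that for a quiver with all $\bar\alpha_i$ injective, the composites $\bar\alpha_{r-1}\cdots\bar\alpha_i\colon V_i\to\C^n$ are all injective, so their images form a genuine flag $0\subset W_1\subset W_2\subset\dots\subset W_{r-1}\subset W_r=\C^n$ in $\C^n$ with $\dim W_j=n_j$. This gives a map from the space of such quivers to the partial flag variety $\SL(n,\C)/P$, where $P$ is the standard parabolic stabilising a fixed reference flag of type $(n_1,\dots,n_r)$; the map is visibly $\SL$-invariant (the $g_i$ act within each $V_i$ and do not change the images in $\C^n$) and surjective, since given any flag one can choose the $\bar\alpha_i$ to be the inclusions of successive subspaces. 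So we already have a surjection from the quotient onto $\SL(n,\C)/P$, and it remains to compute the fibres and upgrade this to the claimed isomorphism with $\SL(n,\C)/[P,P]$.

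Next I would fix one quiver over a given flag, say the one where each $\bar\alpha_i$ is the standard inclusion $\C^{n_i}\hookrightarrow\C^{n_{i+1}}$, composed with left multiplication by some $h\in\SL(n,\C)$ to realise an arbitrary flag; then I would compute its $\SL$-orbit and its stabiliser. The key point is that two quivers $(\bar\alpha_i)$ and $(\bar\alpha_i')$ with the same associated flag differ by block-upper-triangular change of basis data that, after fixing the identification of each $V_i$ with a subspace of $\C^n$, is controlled by $P$ itself; the residual freedom in choosing the $\bar\alpha_i$'s compatible with a fixed flag is exactly the Levi/unipotent data of $P$, but the $\SL(V_i)$ action can absorb everything except the determinants, so the fibre of the quotient over a point of $\SL(n,\C)/P$ is a single point precisely when we quotient $P$ by $[P,P]$ rather than by $P$. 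Concretely I would show: (a) the $\SL=\prod\SL(V_i)$-action is transitive on quivers inducing a given flag \emph{modulo} the scalar ambiguities, giving a fibration with fibre a quotient of a torus; and (b) tracking the $\SL(n,\C)$-action on the left of $\alpha_{r-1}$ together with the stabiliser computation identifies the total space with $\SL(n,\C)\times_P(P/[P,P])=\SL(n,\C)/[P,P]$. This is the same bookkeeping as in the contraction/standardisation discussion preceding the lemma, and I would lean on Lemma~\ref{lem:length2} to see that no further collapsing occurs.

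The main obstacle will be step (a): carefully showing that the $\prod\SL(V_i)$-action (special linear, not general linear) is large enough to standardise all the $\bar\alpha_i$ down to the inclusions \emph{except} for one overall scalar per node, and that these leftover scalars, constrained by the product being in $\SL(n,\C)/[P,P]$ rather than free, assemble into exactly the torus $P/[P,P]$. One has to be careful that the determinant constraints at the various $\SL(V_i)$ do not over- or under-count: a $\GL(V_i)$ would give too much freedom (collapsing to $\SL(n,\C)/P$), while forgetting the scalars entirely would give too little. The cleanest way is probably to induct on $r$, peeling off the last injective map $\bar\alpha_{r-1}\colon V_{r-1}\hookrightarrow\C^n$: use $\SL(n,\C)$ acting on the left to put its image in standard position, use $\SL(V_{r-1})$ to standardise $\bar\alpha_{r-1}$ up to a scalar, and reduce to the length $r-1$ quiver $V_1\to\dots\to V_{r-1}$ inside $\C^{n_{r-1}}$, whose parabolic is the corresponding parabolic of $\SL(n_{r-1},\C)$. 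For the dual statement with surjective $\beta_i\colon V_{i+1}\to V_i$, I would simply dualise: transposing the whole diagram turns surjections into injections and the parabolic $P$ into the opposite parabolic $P^-$, with $[P^-,P^-]$ conjugate to $[P,P]$, so $\SL(n,\C)/[P^-,P^-]\cong\SL(n,\C)/[P,P]$, and the injective case already handled applies verbatim.
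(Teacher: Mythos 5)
Your proposal is correct, and it reaches the result by a somewhat different organisation than the paper. The paper works with the surjective (\( \beta \)) case first: it uses the action of \( \SL \times \SL(n,\C) = \prod_{i=1}^r \SL(n_i,\C) \) to put each \( \beta_i \) into the normal form \( (0_{n_i\times k_i} \mid I_{n_i\times n_i}) \) and then computes inductively that the residual freedom in \( \SL(n,\C) \) is exactly \( [P,P] \), which identifies the orbit space in one step; the injective case is then obtained by dualising. You instead prove the injective case directly by fibring the quotient over the flag variety \( \SL(n,\C)/P \) via the images of the composites, identifying the fibre with the torus \( P/[P,P] \), and using \( \SL(n,\C) \)-equivariance to recognise the total space as \( \SL(n,\C)\times_P (P/[P,P]) \cong \SL(n,\C)/[P,P] \); your dualisation in the opposite direction (injective to surjective, with the conjugate parabolic) is harmless. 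The one point that genuinely needs care in your step (a)/(b) is that the fibre must be identified with \( P/[P,P] \) \emph{as a \( P \)-space}, not just abstractly as \( (\C^*)^{r-1} \): the stabiliser \( P \) of the reference flag acts on the fibre through the characters \( p \mapsto \det(p|_{W_i}) \) (equivalently on \( \Lambda^{n_i}W_i\setminus\{0\} \)), whose common kernel is \( [P,P] \) and whose product map is surjective precisely because the dimension vector is strictly ordered (all \( k_i \geqslant 1 \)) — this is where the hypothesis enters, exactly as in the discussion of \( \chi \) at \eqref{eq:chi}. What your route buys is that it makes explicit the description of the stratum as a \( (\C^*)^{r-1} \)-bundle over the compact flag variety \( \SL(n,\C)/P \), which the paper only extracts afterwards via the exterior-power embedding \eqref{symp:embed}; what the paper's route buys is the concrete normal form for the \( \beta_i \), which is reused verbatim in the hyperk\"ahler analysis of \S\ref{sec:structure-strata} (Propositions\nobreakspace\ref{prop:betasurj} and\nobreakspace\ref{prop:betasurjgen2}).
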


\begin{proof}
  We prove this for the case of \( \beta_i \) surjective, and the case
  of \( \alpha_i \) injective follows by dualising.

  We may choose bases for the \( V_i \) so that
  \begin{equation*}
    \beta_i = \left( 0_{n_i \times k_i} \mid I_{n_i \times n_i} \right),
  \end{equation*}
  where \( n_i = \dim V_i \) and \( k_i = n_{i+1} - n_i \) is the
  dimension of the kernel of \( \beta_i \).  Explicitly, once we have
  chosen such a basis \( e_1, \dots , e_{n_i} \) for \( V_i \), we can
  choose a basis \( e_j^{\prime} \), \( 1 \leqslant j \leqslant
  n_{i+1} \), for \( V_{i+1} \) so that the first \( n_{i+1}-n_i \)
  elements are a basis for \( \ker \beta_i \) and for the remaining \(
  n_i \) elements we have \( \beta_i \colon e_j^{\prime} \mapsto e_j
  \). We may view this as using the action of \( \SL \times \SL(n,\C)
  = \prod_{i=1}^r \SL(n_i,\C) \) to standardise the \( \beta_i \).

  How much freedom do we have in choosing such bases? Since \( \beta_i
  \) transforms by \( \beta_i \mapsto g_i \beta_i g_{i+1}^{-1} \), if
  \( \beta_i \) is in the above standard form, then \( \beta_i = g_i
  \beta_i g_{i+1}^{-1} \) if and only if
  \begin{equation*}
    g_{i+1} =
    \begin{pmatrix}
      * & * \\
      0 & g_i
    \end{pmatrix}
    ,
  \end{equation*}
  where the top left block is \( k_i \times k_i \) and the bottom
  right is \( n_i \times n_i \). Moreover \( g_1 \) is an arbitrary
  element of \( \SL(n_1,\C) \).  We see inductively that the freedom
  in \( \SL(n,\C) \) is the commutator of the parabolic group \( P \)
  associated to the flag of dimensions \( (n_1, n_2, \dots, n_r = n)
  \) in \( \C^n \).
\end{proof}

We conclude that the GIT quotient of the space of full flag quivers by
\( \SL \) gives us a description of the symplectic implosion for \(
\SL(n,\C) \).

\begin{theorem}
  \label{thm:sympimp}
  The GIT quotient of the space of full flag quivers by \( \SL =
  \prod_{i=2}^{n-1} \SL(i,\C) \) is the symplectic implosion for \(
  \SU(n) \).  The stratification by quiver diagrams as above
  corresponds to the stratification of the implosion as the disjoint
  union over the standard parabolic subgroups \( P \) of \( \SL(n,\C)
  \) of the varieties \( \SL(n,\C)/[P,P] \).
\end{theorem}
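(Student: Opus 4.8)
The plan is to identify the GIT quotient $R(\mathbf n)\symp\SL$ of the space of full flag quivers with the non-reductive quotient $\SL(n,\C)\symp N$, which by \S\ref{sec:symplectic-implosion} is the universal symplectic implosion for $\SU(n)$, and then to match the two stratifications. First I would record what the preceding discussion has already supplied: combining Theorem~\ref{thm:symp-stab}, the contraction procedure, and Lemma~\ref{lem:sympstrata}, the GIT quotient $R(\mathbf n)\symp\SL$ is the disjoint union, over the standard parabolic subgroups $P$ of $\SL(n,\C)$, of the homogeneous spaces $\SL(n,\C)/[P,P]$, with open stratum $\SL(n,\C)/N$ corresponding to the Borel subgroup $P=B$. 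Since $R(\mathbf n)$ is a complex vector space, hence smooth and irreducible, and $\SL$ is reductive, $R(\mathbf n)\symp\SL$ is a normal irreducible affine variety; and the locus of injective quivers in $R(\mathbf n)$ is open, $\SL$-invariant and (by the argument of Lemma~\ref{lem:length2}) consists of stable points, hence is $\SL$-saturated with open image $\SL(n,\C)/N$, which therefore sits inside $R(\mathbf n)\symp\SL$ as a dense open subvariety.

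The core of the argument is to show that $R(\mathbf n)\symp\SL$ is the canonical affine completion of this open subvariety. The input is a dimension count: writing $[P,P]=[L,L]\ltimes N_P$ for the Levi decomposition of $P$, a routine computation with the block sizes shows $\dim[P,P]\geq\dim N+2$ whenever $P\ne B$, so the complement of the open stratum in $R(\mathbf n)\symp\SL$ has complex codimension at least $2$. As this quotient is affine and normal, restriction of regular functions is an isomorphism $\cO(R(\mathbf n)\symp\SL)\cong\cO(\SL(n,\C)/N)=\cO(\SL(n,\C))^{N}$, whence
\begin{equation*}
  R(\mathbf n)\symp\SL=\Spec\bigl(\cO(\SL(n,\C))^{N}\bigr)=\SL(n,\C)\symp N ,
\end{equation*}
the isomorphism restricting to the identity on the common open set $\SL(n,\C)/N$. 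By \S\ref{sec:symplectic-implosion} the right-hand side is the symplectic implosion for $\SU(n)$; and since $R(\mathbf n)\symp\SL$ is a GIT, hence Kempf--Ness symplectic, quotient of the flat space $R(\mathbf n)$ by the compact group $\prod_{i=2}^{n-1}\SU(i)$, this is an identification of stratified symplectic spaces.

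Finally I would check the correspondence of stratifications. Both sides carry an action of $\SL(n,\C)$ --- on $R(\mathbf n)\symp\SL$ induced from left multiplication on $\alpha_{n-1}$, on $\SL(n,\C)\symp N$ from left translation --- extending the left action on the common dense subset $\SL(n,\C)/N$; hence the isomorphism just constructed, being the identity there, is $\SL(n,\C)$-equivariant, and in particular preserves stabilisers. On each side the strata are precisely the $\SL(n,\C)$-orbits, namely the spaces $\SL(n,\C)/[P,P]$, and these are disjoint and exhaust the quotient; so the isomorphism carries the stratum $\SL(n,\C)/[P,P]$ of $R(\mathbf n)\symp\SL$ onto the stratum of the implosion with the same stabiliser, which (as distinct standard parabolics of $\SL(n,\C)$ have non-conjugate commutator subgroups) is $\SL(n,\C)/[P,P]$, as claimed. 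The one step where real care is needed is the second: knowing the strata of $R(\mathbf n)\symp\SL$ individually is not enough --- one must see that they are assembled into exactly the canonical affine completion of the open one, and it is normality together with the codimension-$\geq2$ estimate that forces this, the delicate polystability analysis having already been carried out in Theorem~\ref{thm:symp-stab} and Lemmas~\ref{lem:length2}--\ref{lem:sympstrata}.
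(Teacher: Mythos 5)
Your proposal is correct and follows essentially the same route as the paper: identify the strata of the quiver GIT quotient with the spaces $\SL(n,\C)/[P,P]$, note the complement of the open stratum $\SL(n,\C)/N$ has complex codimension at least $2$, and conclude that both affine varieties have coordinate ring $\cO(\SL(n,\C))^N$ and hence coincide. Your extra details (normality of the GIT quotient, the explicit dimension count for $[P,P]$ versus $N$, and the equivariance argument matching strata) simply make explicit what the paper's shorter proof leaves implicit.
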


\begin{proof}
  We have already identified the strata of the GIT quotient of the
  space of full flag quivers with the strata of the implosion. Now
  observe that the complement of the open stratum \( \SL(n,\C)/N \)
  (where \( N \) is the maximal unipotent, i.e.\ the commutator of the
  Borel subgroup) is of complex codimension strictly greater than
  one. The universal symplectic implosion and the GIT quotient of the
  space of full flag quivers therefore have the same coordinate ring
  \( \cO(\SL(n,\C))^N \), and as they are both affine varieties
  they are now isomorphic.
\end{proof}

We recall the embedding~\cite{Guillemin-JS:implosion} of the
symplectic implosion
\begin{equation*}
  K_\C \symp N \subset E,
\end{equation*}
where \( E \) is the direct sum of \( K \)-modules \( E = \oplus
V_{\varpi} \), and \( V_{\varpi} \) is the \( K \)-module with highest
weight \( \varpi \).  We take the sum over a minimal generating set
for the monoid of dominant weights. In our case \( K = \SU(n) \) so
these are just the exterior powers of the standard representation \(
\C^n \).  We denote a highest weight vector of \( V_{\varpi} \) by \(
v_{\varpi} \).

Now we can define a map from the space of quivers to the space \( E =
\bigoplus_{j=1}^{n-1} \wedge^j \C^n \) by sending \( \alpha \) to the
element of \( E \) with \( j \)th component
\begin{equation}
  \label{symp:embed}
   \wedge^j(\alpha_{n-1} \dots \alpha_{j+1} \alpha_j) \vol_j \in
   \wedge^j \C^n, 
\end{equation}
where \( \vol_j \) denotes the standard generator of \( \wedge^j \C^j
\).
 
Note that under the action of \( \SL=\prod_{i=1}^{r-1} \SL(i,\C) \), the
composition \( \alpha_{n-1} \dots \alpha_{j+1} \alpha_j \) gets
postmultiplied by \( g_j^{-1} \); but \( g_j \in \SL(j,\C) \), so the \(
j \)th exterior power is invariant. Hence our map descends to the GIT
quotient by \( \SL \), and thus gives an explicit isomorphism of the
GIT quotient to its image \( K_\C \symp N \) in \( E \).

As \( \C^j = \ker \alpha_j \oplus \C^{m_j} \), and the restriction of
\( \alpha_j \) to \( \C^{m_j} \) is injective, we see that \( \wedge^j
(\alpha_{n-1} \dots \alpha_{j+1} \alpha_j) \) is zero if and only if \(
m_j < j \); i.e.\ \( \alpha_j \) is not injective.  Now, from
\eqref{eq:mj1}--\eqref{eq:mj2}, we see that knowing for which indices this
occurs determines the full sequence of \( m_j \), hence which stratum
we are in. So the \( 2^{n-1} \) strata correspond to the possibilities
for which components of \eqref{symp:embed} are zero.

Recall also that the symplectic implosion may be realised as the
closure \( \overline{K_\C v} \), where \( v = \sum v_\varpi \) is the
sum of the highest weight vectors.  Using the Iwasawa decomposition \(
K_\C=KAN \) and recalling that \( N \) fixes \( v \), we see that \(
\overline{K_\C v} = K (\overline{T_\C v}) \), the sweep under the
compact group \( K \) of a toric variety \( \overline{T_\C v} \). In
terms of the quiver model, recall from above we may use the action of
\( \SL(n,\C) \times \prod_{i=1}^{r-1} \SL(m_i,\C) \) to put the maps \(
\bar{\alpha}_i \) into a form where the only non-zero entries are the
\( (j,j) \) terms for \( j = 1, \dots, m_i \), and that these terms
may be set to be arbitrary non-zero scalars. Taking all these scalars
to be \( 1 \), we arrive at the stratification in
Lemma\nobreakspace \ref {lem:sympstrata}. If instead we take all the scalars for \(
\bar{\alpha}_i \) to be equal to a non-zero scalar \( \sigma_i \),
then the freedom in putting \( \bar{\alpha}_i \) into this form is the
parabolic \( P \) rather than its commutator. This gives a description
of the strata as \( (\C^*)^{r-1} \)-bundles over the \emph{compact}
flag variety \( \SL(n,\C)/P \).  This is of course just reflecting
the fact that \( P = [P,P]. T_\C^{r-1} \).  Fixing basepoints in the
flag varieties and taking all the strata together gives the toric
variety \( \overline{T_\C v} \).

\section{Hyperk\"ahler quiver diagrams}
\label{sec:hyperk-quiv-diagr}

We now turn our attention to hyperk\"ahler quiver diagrams.  For \(
K=\SU(n) \) actions this gives us a finite-dimensional approach to
constructing the universal hyperk\"ahler implosion. This uses work on
quiver varieties due to Nakajima~\cite{Nakajima:ale} and Kobak and
Swann \cite{Kobak-S:finite} (see also Bielawski
\cite{Bielawski:hyper-kaehler,Bielawski:GM}). We want to produce a
hyperk\"ahler stratified space with a torus action whose hyperk\"ahler
reductions by this action give Kostant varieties.

Choose integers \( 0 \leqslant n_1 \leqslant n_2 \leqslant \dots
\leqslant n_r=n \) and consider the flat hyperk\"ahler space
\begin{equation}
  \label{eq:Mn}
  M = M(\mathbf n) = \bigoplus_{i=1}^{r-1} \HH^{n_i n_{i+1}} = 
  \bigoplus_{i=1}^{r-1} \Hom(\C^{n_i},\C^{n_{i+1}}) \oplus
  \Hom(\C^{n_{i+1}},\C^{n_i})
\end{equation}
with the hyperk\"ahler action of \( \Un(n_1) \times \dots \times
\Un(n_r) \)
\begin{equation*}
  \alpha_i \mapsto g_{i+1} \alpha_i g_i^{-1},\quad
  \beta_i \mapsto g_i \beta_i g_{i+1}^{-1} \qquad (i=1,\dots r-1),
\end{equation*}
with \( g_i \in \Un(n_i) \) for \( i=1, \dots, r \). Here \( \alpha_i
\) and \( \beta_i \) denote elements of \( \Hom (\C^{n_i},
\C^{n_{i+1}}) \) and \( \Hom (\C^{n_{i+1}},\C^{n_i}) \) respectively,
and right quaternion multiplication is given by
\begin{equation}
  \label{eq:j}
  (\alpha_i,\beta_i)\jj = (-\beta_i^*,\alpha_i^*).
\end{equation}

We may write \( (\alpha,\beta) \in M(\mathbf n) \) as a quiver
diagram:
\begin{equation*}
  0 \stackrel[\beta_0]{\alpha_0}{\rightleftarrows}
  \C^{n_1}\stackrel[\beta_1]{\alpha_1}{\rightleftarrows}
  \C^{n_2}\stackrel[\beta_2]{\alpha_2}{\rightleftarrows}\dots
  \stackrel[\beta_{r-2}]{\alpha_{r-2}}{\rightleftarrows} \C^{n_{r-1}}
  \stackrel[\beta_{r-1}]{\alpha_{r-1}}{\rightleftarrows} \C^{n_r} = \C^n,
\end{equation*}
where \( \alpha_0 = \beta_0 = 0 \).  For brevity, we will often call
such a diagram a quiver.  If each \( \beta_i \) is zero we recover a
symplectic quiver diagram.

Let \( \tH \) be the subgroup, isomorphic to \( \prod_{i=1}^{r-1}
\Un(n_i) \), given by setting \( g_r=1 \) and let
\begin{gather*}
  \tmu \colon M \to \LIE(\tH)
  \otimes \R^3 = 
  \LIE(\tH) \otimes (\R + \C)\\
  \tmu(\alpha,\beta) =
  \bigl((\alpha_i\alpha_i^*-\beta_i^*\beta_i+\beta_{i+1}\beta_{i+1}^*
  - \alpha_{i+1}^*\alpha_{i+1})\ii,
  \alpha_i\beta_i - \beta_{i+1}\alpha_{i+1}\bigr)
\end{gather*}
be the hyperk\"ahler moment map.  Hyperk\"ahler quotients \(
\tmu^{-1}(c)/\tH \) of \( M \) by \( \tH \) (with \( c \in
\LIE(Z(\tH))\otimes \R^3 \) where \( Z(\tH) \cong T^{r-1} \) is the
centre of \( \tH \)) will admit a residual hyperk\"ahler action of \(
\Un(n_r)=\Un(n) \), although in fact only \( \SU(n) \) acts (almost)
effectively as the diagonal central \( \Un(1) \) acts trivially.

It is proved in \cite{Kobak-S:finite} (see also~\cite{KP}) that when
we have a full flag (that is, when \( r=n \) and \( n_j=j \) for each
\( j \), so that the centre of \( \tH \) can be identified with the
maximal torus \( T \) of \( K=\SU(n) \)) then the hyperk\"ahler
quotient \( \tmu^{-1}(0)/\tH \) of \( M \) by \( \tH \) can be
identified with the nilpotent cone in \( \kf_\C \).  Of course the
nilpotent cone in \( \kf_\C \) is the closure of a generic nilpotent
coadjoint orbit. On the other hand it is proved in~\cite{Nakajima:ale}
that cotangent bundles of generalised flag varieties (which are
diffeomorphic to semisimple orbits of \( \sln(n,\C) \)) may be
obtained as hyperk\"ahler quotients \( \tmu^{-1}(c)/\tH \) of
\( M \) by \( \tH \) for generic non-zero \( c \).

Now we may instead reduce in stages by first reducing with respect to
the group \( H = \prod_{i=1}^{r-1}\SU(n_i) \) to obtain a
hyperk\"ahler space \( Q = M \hkq H \), which has a residual action of
the torus \( T^{r-1} \) as well as an action of \( \SU(n_r) = \SU(n)
\), with the hyperk\"ahler quotients of \( Q \) by \( T^{r-1} \)
coinciding with the hyperk\"ahler quotients of~\( M \) by \( \tH \).
This makes the follow definition reasonable.

\begin{definition}
  \label{def:Q}
  The \emph{universal hyperk\"ahler implosion for \( \SU(n) \)} will
  be the hyperk\"ahler quotient \( Q = M \hkq H \), where \( M \), \(
  H \) are as above with \( n_j = j \), for \( j=1, \dots, n
  \), (i.e.\ the case of a full flag quiver).
\end{definition}

Note that as \( Q \) is a hyperk\"ahler reduction by \( H \) at
level~\( 0 \), it inherits an \( \SU(2) \) action that rotates the
two-sphere of complex structures: this action is induced from
multiplication by unit quaternions on~\( M =
\HH^{\sum_{i=1}^{r-1}n_in_{i+1}} \) on the other side from that on
which \( H \) acts and includes the transformation~\eqref{eq:j}.
We denote these group actions on \( M \), \( Q \), etc., by
\begin{equation}
  \label{eq:SU2-rot}
  \SUr.
\end{equation}

Let us take a more detailed look at the structure of \( Q \), in the
general case where the flag is not necessarily full.

The components of the complex moment map \( \mu_\C \) for the \( H \)
action on \( M \) are the trace-free parts of \( \alpha_i \beta_i -
\beta_{i+1} \alpha_{i+1} \) for \( 0 \leqslant i \leqslant r-2 \),
because we are performing a reduction by special unitary rather than
unitary groups.  Hence, the complex moment map equation \( \mu_\C =0
\) can be expressed as the requirement that
\begin{equation}
  \label{eq:mmcomplex}
  \alpha_{i-1}\beta_{i-1} - \beta_i \alpha_i = \lambda^\C_i I \qquad
  (i=1,\dots,r-1),
\end{equation}
for some complex scalars \( \lambda^\C_1,\dots,\lambda^\C_{r-1} \).
Similarly the real moment map equation \( \mu_\R =0 \) can be
expressed as:
\begin{equation}
  \label{eq:mmreal}
  \alpha_{i-1} \alpha_{i-1}^* - \beta_{i-1}^* \beta_{i-1} +
  \beta_i \beta_i^* - \alpha_i^* \alpha_i =
  \lambda^\R_i I \quad (i=1,\dots,r-1),
\end{equation}
where \( \lambda^\R_i \) are real scalars.

The hyperk\"ahler quotient \( Q = M \hkq H = (\mu_\R^{-1}(0) \cap
\mu_\C^{-1}(0))/H \) is the symplectic quotient of the affine variety
\( \mu_\C^{-1}(0) \subset M \) by the compact group~\( H \).  The work
of Kempf and Ness \cite{KN} (cf.~\cite{Kirwan:quotients,Thomas}) shows
that \( Q \) can be canonically identified with the GIT quotient \(
\mu_\C^{-1}(0) \symp H_\C \) of~\( \mu_\C^{-1}(0) \) by the
complexification
\begin{equation*}
  H_\C  = \prod_{i=1}^{r-1}\SL(n_i,\C)
\end{equation*}
of \( H \).  This identification proceeds via the \( H \)-invariant
composition
\begin{equation*}
  \mu_\R^{-1}(0) \cap \mu_\C^{-1}(0) \to \mu_\C^{-1}(0)
  \to \mu_\C^{-1}(0) \symp H_\C
\end{equation*}
of the inclusion of \( \mu_\R^{-1}(0) \cap \mu_\C^{-1}(0) \) in \(
\mu_\C^{-1}(0) \) with the natural map \( \mu_\C^{-1}(0) \to
\mu_\C^{-1}(0) \symp H_\C \).  The action of \( H_\C \) is given by
\begin{gather*}
  \alpha_i \mapsto g_{i+1} \alpha_i g_i^{-1}, \quad \beta_i \mapsto
  g_i \beta_i g_{i+1}^{-1} \qquad (i=1,\dots r-2),\\
  \alpha_{r-1} \mapsto \alpha_{r-1} g_{r-1}^{-1}, \quad \beta_{r-1}
  \mapsto g_{r-1} \beta_{r-1},
\end{gather*}
where \( g_i \in \SL(n_i,\C) \).  Alternatively, one may first perform
the hyperk\"ahler quotient by taking the K\"ahler or GIT quotient \(
\mu_\R^{-1}(0) / H = M \symp H_\C \), and considering the level set
cut out by the image of \( \mu_\C^{-1}(0) \).

In this GIT picture, we have a residual action of \( \SL(n,\C) =
\SL(n_r,\C) \) on the quotient \( Q \) given by
\begin{equation*}
  \alpha_{r-1} \mapsto g_r \alpha_{r-1}, \quad
  \beta_{r-1} \mapsto \beta_{r-1} g_r^{-1}.
\end{equation*}
Explicitly, we see that the action of the \( g_i \) (for \( 1
\leqslant i \leqslant r-1 \)) just conjugates the left-hand side of
\eqref{eq:mmcomplex}, so preserves the equations. This action commutes
with the residual action of \( \tH_\C/H_\C \) which we can
identify with \( (\C^*)^{r-1} \) (and with the maximal torus \( T_\C
\) of \( K_\C \) in the case of a full flag, via the basis of \( \tf
\) given by the simple roots).  Again, we note that if all \( \beta_i
\) are zero then the complex moment map equations hold trivially and
we recover the symplectic quiver situation of
\S \ref {sec:symplectic-quivers}.

For each quiver diagram \( (\alpha,\beta) \in M(\mathbf n) \), we define
\begin{equation*}
  X = \alpha_{r-1} \beta_{r-1} \in \Hom (\C^n,\C^n).
\end{equation*}
This element is invariant under the action of \( \tH_\C \) and transforms
by conjugation under the residual \( \GL(n,\C) = \GL(n_r,\C) \)
action.  In fact, it is the complex-symplectic moment map for the
action of \( \GL(n,\C) \) on~\( M(\mathbf n) \).  We get an \(
(\C^*)^{r-1} \)-invariant and \( \SL(n,\C) \)-equivariant map \( Q
\rightarrow \sln(n,\C) \) induced by:
\begin{equation}
  \label{eq:X0}
  (\alpha, \beta) \mapsto (X)_0 = X - \frac1n \tr(X) I_n
\end{equation}
where \( I_n \) is the \( n\times n \)-identity matrix, which is the
complex-symplectic moment map for the action of \( \SL(n,\C) \) on~\(
M(\mathbf n) \).

We start by obtaining information on the eigenvalues of \( X \) and
other endomorphisms derived from final segments of the
quiver.  Define
\begin{equation}
  \label{eq:defXk}
  X_k = \alpha_{r-1} \alpha_{r-2} \dots
  \alpha_{r-k} \beta_{r-k} \dots \beta_{r-2} \beta_{r-1} \qquad (1
  \leqslant k \leqslant r-1)
\end{equation}
so that \( X = X_1 \).

\begin{lemma}
  For \( (\alpha,\beta) \in \mu_\C^{-1}(0) \),
  satisfying~\eqref{eq:mmcomplex}, we have
  \begin{equation}
    \label{eq:Xformula}
    X_k X = X_{k+1} - (\lambda_{r-1}^\C + \dots +
    \lambda_{r-k}^\C) X_k. 
  \end{equation}
\end{lemma}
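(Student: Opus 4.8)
The plan is to prove \eqref{eq:Xformula} directly, by repeatedly substituting the complex moment map equation \eqref{eq:mmcomplex}, in the form $\beta_i\alpha_i = \alpha_{i-1}\beta_{i-1} - \lambda^\C_i I$ for $1 \le i \le r-1$ (with the convention $\alpha_0 = \beta_0 = 0$), into the composite endomorphism $X_k X$. Nothing deep is involved; the work is entirely bookkeeping, namely tracking which adjacent pair $\beta_i\alpha_i$ is contracted at each step and checking that the scalar residues accumulate to $\lambda^\C_{r-1} + \dots + \lambda^\C_{r-k}$.

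Concretely, I would introduce intermediate composites $M_0, M_1, \dots, M_k$, where $M_j$ consists of $\alpha_{r-1}\cdots\alpha_{r-k}$, then the $\beta$-block $\beta_{r-k}\cdots\beta_{r-1-j}$ (read as empty when $j=k$), then a lone $\alpha_{r-1-j}$, then the trailing $\beta$-block $\beta_{r-1-j}\cdots\beta_{r-1}$. Comparing with \eqref{eq:defXk}, one has $M_0 = X_k X$, while $M_k = \alpha_{r-1}\cdots\alpha_{r-k}\alpha_{r-k-1}\beta_{r-k-1}\beta_{r-k}\cdots\beta_{r-1} = X_{k+1}$ (with $M_k = 0 = X_r$ in the degenerate case $k = r-1$, where $\alpha_0 = 0$). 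In $M_j$ the pair $\beta_{r-1-j}\alpha_{r-1-j}$ is adjacent, sitting between the first $\beta$-block and the lone $\alpha$, so \eqref{eq:mmcomplex} at $i = r-1-j$ replaces it by $\alpha_{r-2-j}\beta_{r-2-j} - \lambda^\C_{r-1-j}I$. The contribution of $-\lambda^\C_{r-1-j}I$ lets the two $\beta$-blocks recombine into $\beta_{r-k}\cdots\beta_{r-1}$, giving exactly $-\lambda^\C_{r-1-j}X_k$; the contribution of $\alpha_{r-2-j}\beta_{r-2-j}$, after absorbing $\beta_{r-2-j}$ into the trailing $\beta$-block, is precisely $M_{j+1}$. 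Hence $M_j = M_{j+1} - \lambda^\C_{r-1-j}X_k$ for $0 \le j \le k-1$, and summing telescopes to $X_k X = M_0 = M_k - (\lambda^\C_{r-1} + \dots + \lambda^\C_{r-k})X_k = X_{k+1} - (\lambda^\C_{r-1} + \dots + \lambda^\C_{r-k})X_k$, which is \eqref{eq:Xformula}.

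The one point needing care is pinning down the $M_j$ so that $\beta_{r-1-j}\alpha_{r-1-j}$ really is an adjacent pair at each stage and so that the two $\beta$-blocks visibly reassemble into $\beta_{r-k}\cdots\beta_{r-1}$ after every substitution; the boundary cases $j=0$ (where $M_0 = X_k X$ ends in $\alpha_{r-1}\beta_{r-1}$), $j=k$ (empty first $\beta$-block, $M_k = X_{k+1}$), and $k=r-1$ (where the last substitution uses $i=1$ and $\alpha_0 = \beta_0 = 0$) each deserve a line of checking but present no difficulty. As a sanity check, the case $k=1$ can be done by hand: $X^2 = X_2 - \lambda^\C_{r-1}X$, which is immediate from $\alpha_{r-2}\beta_{r-2} = \beta_{r-1}\alpha_{r-1} + \lambda^\C_{r-1}I$.
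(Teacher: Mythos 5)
Your proposal is correct and is essentially the paper's own argument: the authors likewise substitute \( \beta_{i}\alpha_{i} = \alpha_{i-1}\beta_{i-1} - \lambda^\C_{i}I \) repeatedly to shuffle the extra \( \alpha \) from \( X \) leftward through the \( \beta \)'s, picking up a \( -\lambda^\C_{j}X_k \) term at each of the \( k \) steps. Your intermediate composites \( M_j \) and the telescoping identity \( M_j = M_{j+1} - \lambda^\C_{r-1-j}X_k \) merely formalise that same induction, with the boundary cases (including \( k=r-1 \), where \( \alpha_0=\beta_0=0 \) gives \( X_r=0 \)) handled correctly.
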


\begin{proof}
  This is a straightforward consequence of the equations
  \eqref{eq:mmcomplex}.  We have
  \begin{equation*}
    \begin{split}
      X_k X &= \alpha_{r-1} \dots \alpha_{r-k} \beta_{r-k} \dots
      \beta_{r-2}
      \beta_{r-1} \alpha_{r-1} \beta_{r-1} \\
      &= \alpha_{r-1} \dots \alpha_{r-k} \beta_{r-k} \dots \beta_{r-2}
      (\alpha_{r-2} \beta_{r-2} - \lambda_{r-1}^\C) \beta_{r-1}\\
      &= \alpha_{r-1} \dots \alpha_{r-k} \beta_{r-k} \dots \beta_{r-2}
      \alpha_{r-2} \beta_{r-2} \beta_{r-1} - \lambda_{r-1}^\C X_k.
    \end{split}
  \end{equation*}
  We repeat this process, using the equations successively to shuffle
  the \( \alpha \) term from \( X \) forward until it meets the other
  \( \alpha \)'s.  Each such shuffle means we pick up a \(
  -\lambda_j^\C X_k \) term. After \( k \) such operations, we have
  the desired result.
\end{proof}

\noindent
Putting \( \nu_i = \sum_{j=i}^{r-1} \lambda_j^\C \), so that \( X_k (X
+ \nu_{r-k}) = X_{k+1} \), we find inductively
\begin{equation}
  \label{eq:ann-poly}
  X (X + \nu_{r-1}) \dots (X + \nu_1) = 0.
\end{equation}
We thus have an annihilating polynomial for \( X \) in terms of the \(
\lambda_i^\C \); in particular, if all \( \lambda_i^\C \) are zero,
then \( X \) is nilpotent (cf.~\cite{Kobak-S:finite}).

To gain more information, we decompose \( \C^n \) as the direct sum \(
\bigoplus_{j=1}^\ell \ker (X - \tau_j I)^{m_j} \), where
\begin{equation*}
  (x - \tau_1)^{m_1} \dots (x -\tau_\ell)^{m_\ell},
\end{equation*}
is the characteristic polynomial of~\( X \) and the \( \tau_j \) are
distinct.  More generally, for each~\( i \), we decompose
\begin{equation}
  \label{eq:geneigen}
  \C^{n_i} = \bigoplus_{j=1}^{\ell_i} \ker (\alpha_{i-1} \beta_{i-1} -
  \tau_{i,j} I)^{m_{ij}} 
\end{equation}
where the \( \tau_{i,j} \) for \( 1 \leqslant j \leqslant \ell_i \) are
the eigenvalues of \( \alpha_{i-1} \beta_{i-1} \), with associated
generalised eigenspaces given by the summands on the right hand side
of \eqref{eq:geneigen}.

Since \( (\alpha,\beta) \in \mu_\C^{-1}(0) \),
equation~\eqref{eq:mmcomplex} shows that \( \beta_i (\alpha_i \beta_i
- \tau I) = (\beta_i\alpha_i - \tau I)\beta_i = (\alpha_{i-1}
\beta_{i-1} - (\lambda_i^\C + \tau) I) \beta_i \), for any scalar \(
\tau \).  Thus
\begin{equation*}
  \beta_i (\alpha_i \beta_i - \tau I)^m
  = (\alpha_{i-1} \beta_{i-1} - (\lambda_i^\C + \tau) I)^m \beta_i,
\end{equation*}
for each~\( m \), and \( \beta_i \) restricts to a map
\begin{equation}
  \label{eq:betai}
  \beta_i \colon \ker (\alpha_i \beta_i - \tau I)^m \rightarrow
  \ker(\alpha_{i-1} \beta_{i-1} - (\lambda_i^\C + \tau)I)^m. 
\end{equation}
A similar calculation shows that \( \alpha_i \) restricts to a map
\begin{equation}
  \label{eq:alphai}
  \alpha_i \colon \ker (\alpha_{i-1} \beta_{i-1} - (\lambda_i^\C +
  \tau) I)^m \rightarrow \ker (\alpha_i \beta_i - \tau I)^m. 
\end{equation}
We obtain an endomorphism \( \alpha_i \beta_i \) of \( \ker (\alpha_i
\beta_i - \tau I)^m \), which is an isomorphism unless \( \tau=0 \).
Similarly the composition \( \beta_i \alpha_i \) is an isomorphism of
\( \ker (\alpha_{i-1} \beta_{i-1} - (\lambda_i^\C + \tau)I)^m =\ker
(\beta_i \alpha_i - \tau I)^m \) onto itself unless \( \tau=0 \).
Therefore the maps \eqref{eq:betai} and \eqref{eq:alphai} are
bijective unless \( \tau = 0 \).

We deduce that \( \tau \neq 0 \) is an eigenvalue of \( \alpha_i
\beta_i \) if and only if \( \tau + \lambda_i^\C \neq \lambda_i^\C \)
is an eigenvalue of \( \alpha_{i-1} \beta_{i-1} \). Moreover \(
\alpha_i \) and \( \beta_i \) define isomorphisms between the
associated generalised eigenspaces.  In addition, if the dimension
vector \( (n_1,\dots,n_r) \) is strictly ordered, \( 0 < n_1 <
n_2 < \dots < n_r = n \), then \( \alpha_i \beta_i \in \End(V_{i+1})
\) has zero as an eigenvalue, and \( \alpha_i \), \( \beta_i \)
restrict to maps between the associated generalised \( 0 \)-eigenspace
and the generalised eigenspace for \( \alpha_{i-1} \beta_{i-1} \)
associated to \( \lambda_i^\C \) (this latter space may of course be
zero).

This gives us the following lemma:

\begin{lemma}
  \label{lem:evals}
  Suppose the dimension vector \( \mathbf n \) is strictly ordered.
  Then for \( (\alpha,\beta) \subset M(\mathbf n) \)
  satisfying~\eqref{eq:mmcomplex}, the trace-free endomorphism \(
  (X)_0 = (\alpha_{r-1}\beta_{r-1})_0 \), defined at
  equation~\eqref{eq:X0}, has eigenvalues \( \kappa_1,\dots,\kappa_r
  \), where
  \begin{equation*}
    \begin{split}
      \kappa_j &= \frac1n\Bigl(n_1 \lambda^\C_1 + n_2 \lambda_2^\C +
      \dots + n_{j-1} \lambda^\C_{j-1} \eqbreak[4] -(n-n_j)
      \lambda_j^\C - (n-n_{j+1})\lambda_{j+1}^\C - \dots -
      (n-n_{r-1})\lambda_{r-1}^\C\Bigr).
    \end{split}
  \end{equation*}
  The eigenvalue \( \kappa_j \) occurs with algebraic multiplicity at
  least \( n_j - n_{j-1} \).  If \( \kappa_1, \dots, \kappa_r \) are
  all distinct the multiplicity of \( \kappa_j \) is exactly \( n_j -
  n_{j-1} \).

  Moreover if \( i\leqslant j \) then
  \begin{equation*}
    \kappa_{j+1} - \kappa_i = \lambda_i^\C + \lambda_{i+1}^\C + \dots +
    \lambda_j^\C .
  \end{equation*}
\end{lemma}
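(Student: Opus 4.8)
The plan is to compute the characteristic polynomial of $X=\alpha_{r-1}\beta_{r-1}$ in closed form by running a recursion down the quiver, and then to read off $(X)_0$ from it. For $1\leqslant i\leqslant r$ I would set $Y_i=\alpha_{i-1}\beta_{i-1}\in\End(\C^{n_i})$, with the convention $\alpha_0=\beta_0=0$, so that $Y_1=0$ and $Y_r=X$, and write $\phi_i(x)=\det(xI_{n_i}-Y_i)$. Equation~\eqref{eq:mmcomplex} gives $\beta_i\alpha_i=Y_i-\lambda_i^\C I_{n_i}$ on $\C^{n_i}$, while $Y_{i+1}=\alpha_i\beta_i$ on $\C^{n_{i+1}}$. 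Sylvester's identity $\det(xI_{n_{i+1}}-\alpha_i\beta_i)=x^{n_{i+1}-n_i}\det(xI_{n_i}-\beta_i\alpha_i)$ --- or, equivalently, the generalised-eigenspace analysis of $\alpha_{i-1}\beta_{i-1}$ versus $\alpha_i\beta_i$ carried out in the paragraphs preceding the lemma --- then yields the recursion
\[
\phi_{i+1}(x)=x^{\,n_{i+1}-n_i}\,\phi_i(x+\lambda_i^\C).
\]

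Starting from $\phi_1(x)=x^{n_1}$ and iterating, with the convention $n_0=0$, this unwinds to
\[
\det(xI_n-X)=\prod_{j=1}^{r}(x+\nu_j)^{\,n_j-n_{j-1}},\qquad \nu_j=\lambda_j^\C+\dots+\lambda_{r-1}^\C
\]
(and $\nu_r=0$). Hence the eigenvalues of $X$ are the numbers $-\nu_j$; since $\mathbf n$ is strictly ordered each exponent $n_j-n_{j-1}$ is positive, so $-\nu_j$ genuinely occurs, with algebraic multiplicity at least $n_j-n_{j-1}$, and exactly $n_j-n_{j-1}$ when the $-\nu_j$ are pairwise distinct. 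Summing the roots, interchanging the order of summation and telescoping $\sum_{j=1}^{m}(n_j-n_{j-1})=n_m$, gives $\tr(X)=-\sum_{j=1}^{r}(n_j-n_{j-1})\nu_j=-\sum_{m=1}^{r-1}n_m\lambda_m^\C$ (this can also be seen directly, since $\tr(X)=\tr(\beta_{r-1}\alpha_{r-1})=\tr(Y_{r-1})-n_{r-1}\lambda_{r-1}^\C$ and one then inducts down the quiver).

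Finally, $(X)_0=X-\frac1n\tr(X)I_n$ differs from $X$ by a scalar, so $\det(xI_n-(X)_0)=\prod_{j=1}^{r}(x-\kappa_j)^{\,n_j-n_{j-1}}$ with $\kappa_j=-\nu_j-\frac1n\tr(X)$; in particular all the multiplicity statements pass verbatim from $X$ to $(X)_0$. Substituting $\nu_j=\sum_{m\geqslant j}\lambda_m^\C$ and $\tr(X)=-\sum_m n_m\lambda_m^\C$ and collecting terms, the coefficient of $\lambda_m^\C$ in $n\kappa_j$ is $n_m$ for $m<j$ and $n_m-n=-(n-n_m)$ for $j\leqslant m\leqslant r-1$, which is precisely the claimed expression for $\kappa_j$. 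The last assertion then follows at once: for $i\leqslant j$ the scalar $\frac1n\tr X$ cancels and $\kappa_{j+1}-\kappa_i=\nu_i-\nu_{j+1}=\lambda_i^\C+\lambda_{i+1}^\C+\dots+\lambda_j^\C$.

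I do not expect a serious obstacle here, since the generalised-eigenspace analysis immediately preceding the lemma already supplies the structural input. The two points that need care are getting the base case of the recursion right ($Y_1=\alpha_0\beta_0=0$, which is exactly what pins the eigenvalues of $X$ to the partial sums $\nu_j$) and justifying the factor $x^{n_{i+1}-n_i}$ in the recursion --- i.e.\ that $\alpha_i\beta_i$ and $\beta_i\alpha_i$ have the same spectrum with the correct multiplicities, including nilpotent parts --- for which Sylvester's determinant identity suffices; the remainder is just the bookkeeping matching the explicit form of $\kappa_j$.
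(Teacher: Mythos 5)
Your proposal is correct and follows essentially the same route as the paper: the lemma is derived there from the generalised-eigenspace analysis in the paragraphs immediately preceding it, which is exactly the spectral recursion you encode via Sylvester's identity \( \det(xI_{n_{i+1}}-\alpha_i\beta_i)=x^{\,n_{i+1}-n_i}\det(xI_{n_i}-\beta_i\alpha_i) \) combined with the shift \( \beta_i\alpha_i=\alpha_{i-1}\beta_{i-1}-\lambda_i^\C I \) from~\eqref{eq:mmcomplex}. The resulting closed form \( \det(xI_n-X)=\prod_{j=1}^r(x+\nu_j)^{n_j-n_{j-1}} \), the trace computation \( \tr X=-\sum_m n_m\lambda_m^\C \), the multiplicity statements and the formula for \( \kappa_j \) all check out.
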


It follows from the argument above that we can use generalised
eigenspaces to decompose our quiver as a direct sum of quivers with
maps \( \alpha_{i,j},\beta_{i,j} \),
\begin{equation*}
  V_i^j \stackrel[\beta_{i,j}]{\alpha_{i,j}}{\rightleftarrows} V_{i+1}^j,
\end{equation*}
satisfying \( \alpha_{i,j} \beta_{i,j} - \beta_{i+1,j} \alpha_{i+1, j}
= \lambda_{i+1}^\C \) and such that \( \alpha_{i,j} \beta_{i,j} \) has
only one eigenvalue \( \tau_{i+1,j} \).  

Suppose for some \( j \) we have that \( \alpha_{k,j} \) and \(
\beta_{k,j} \) are isomorphisms for \( i < k < s \) but not for \( k=i \)
or \( k=s \).  Then \( \tau_{i+1,j} = 0 \), \( \tau_{s+1,j} = 0 \) and
Lemma\nobreakspace \ref {lem:evals} implies that \( \sum_{k=i+1}^s \lambda^\C_k = 0 \).

On the other hand, if \( \tau_{i+1,j} \) is non-zero, then \(
\alpha_{i,j} \) and \( \beta_{i,j} \) are isomorphisms.

\begin{remark}
  \label{rem:contract-a}
  Whenever \( \alpha_{i,j} \) is an isomorphism and \( i < r-1 \),
  equation~\eqref{eq:mmcomplex} implies that \( \beta_{i,j} =
  (\alpha_{i,j})^{-1}(\lambda_{i+1}^\C + \beta_{i+1,j}\alpha_{i+1,j})
  \).  We may now perform a contraction of the subquiver analogous to
  that in the symplectic case, by replacing
  \begin{equation*}
    V_{i-1}^j
    \stackrel[\beta_{i-1,j}]{\alpha_{i-1,j}}{\rightleftarrows}
    V_i^j
    \stackrel[\beta_{i,j}]{\alpha_{i,j}}{\rightleftarrows}
    V_{i+1}^j  
    \stackrel[\beta_{i+1,j}]{\alpha_{i+1,j}}{\rightleftarrows}
    V_{i+2}^j
  \end{equation*}
  with
  \begin{equation*}
    V_{i-1}^j
    \stackrel[\beta_{i-1,j}]{\alpha_{i-1,j}}{\rightleftarrows}
    V_i^j
    \stackrel[(\alpha_{i,j})^{-1}\beta_{i+1,j}]{\alpha_{i+1,j}
    \alpha_{i,j}}{\rightleftarrows} 
    V_{i+2}^j.
  \end{equation*}
  The complex moment map equations for the contracted quiver are now
  satisfied with
  \begin{equation*}
    \alpha_{i-1,j}\beta_{i-1,j} - (\alpha_{i,j})^{-1}\beta_{i+1,j}
    \alpha_{i+1,j}\alpha_{i,j} = \lambda_i^\C + \lambda_{i+1}^\C. 
  \end{equation*}
  Conversely, given \( \alpha_{i,j} \) and \( \lambda_{i+1}^\C \) we
  may recover \( \beta_{i,j} \) from the contracted quiver and reverse
  the process.

  Observe that in the situation described above where \( \alpha_{k,j}
  \) and \( \beta_{k,j} \) are isomorphisms for \( i < k < s \) but
  not for \( k = i \) or \( k = s \), then iterating the above
  procedure and using the relation \( \sum_{k=i+1}^s \lambda^\C_k = 0
  \) implies (suppressing the \( j \) index) that \(
  \alpha_i\beta_i\beta_{i+1}\dots\beta_{s-1} =
  \beta_{i+1}\dots\beta_s\alpha_s \).
\end{remark}

Note that given an identification of \( V_{i+1}^j \) with \( V_i^j \),
we may apply the action of \( \SL(V_{i,j}) \) to set \( \alpha_{i,j}
\) to be a non-zero scalar multiple \( aI \) of the identity.  If we
have a \( \GL(V_{i,j}) \) action available we may set \( a=1 \).

\begin{example}
  \label{ex:contract}
  Suppose a quiver
  \begin{equation*}
    0 \rightleftarrows \C^m \rightleftarrows \C^m \rightleftarrows \dots
    \rightleftarrows \C^m \rightleftarrows 0,
  \end{equation*}
  where there are \( p \) copies of \( \C^m \), satisfies the complex
  moment map equations for \( \prod_{k=1}^p \SL(m,\C) \), with \(
  \sum_{k=1}^p \lambda_k^\C = 0 \) but \( \sum_{k=i}^p \lambda_k^\C
  \neq 0 \) for \( i>1 \), and contracts to the zero quiver
  \begin{equation*}
    0 \rightleftarrows \C^m  \rightleftarrows 0.
  \end{equation*}
  Then it lies in the \( \prod_{k=1}^p \SL(m,\C) \)-orbit of a quiver
  of the form
  \begin{equation*}
    0 \rightleftarrows \C^m
    \stackrel[b_1]{a_1}{\rightleftarrows} \C^m
    \rightleftarrows \dots \rightleftarrows \C^m
    \stackrel[b_{p-1}]{a_{p-1}}{\rightleftarrows} \C^m
    \rightleftarrows 0
  \end{equation*}
  where the maps \( \C^m \rightleftarrows \C^m \) are multiplication
  by scalars \( a_j \) and \( b_j \) satisfying \( a_j b_j =
  \sum_{k=j+1}^p \lambda_k^\C \).
\end{example}

\begin{remark}
  \label{rem:contract-b}
  Suppose \( \alpha_{r-1,j} \) is an isomorphism.  Then we may use the
  complex moment map equation to write \( \beta_{r-1,j} =
  (\alpha_{r-2,j}\beta_{r-2,j}-\lambda^\C_{r-1})\alpha_{r-1,j}^{-1}
  \).  We may contract the right-hand end of the subquiver to get
  \begin{equation*}
    \dotsb \rightleftarrows
    V_{r-3,j}
    \stackrel[\beta_{r-3,j}]{\alpha_{r-3,j}}{\rightleftarrows}
    V_{r-2,j}
    \stackrel[\beta_{r-2,j}(\alpha_{r-1,j})^{-1}]{\alpha_{r-1,j}\alpha_{r-2,j}}{\rightleftarrows}
    V_{r,j}
  \end{equation*}
  satisfying the complex moment map equations at \(
  (\lambda^\C_1,\dots,\lambda^\C_{r-2}) \).
\end{remark}

If we are primarily interested in quivers with strictly ordered
dimension vector so that Lemma\nobreakspace \ref {lem:evals} applies, then
Remarks\nobreakspace \ref {rem:contract-a} and\nobreakspace  \ref {rem:contract-b} imply that for many purposes it
is enough to consider the case when \( \tau_{i+1,j} = 0 \) for all \(
i \); that is, when \( X=\alpha_{r-1}\beta_{r-1} \) is nilpotent and
all \( \lambda^\C_i \) are zero, so that the quiver satisfies the
complex moment map equations \( \tmu_\C = 0 \),
\begin{equation} 
  \label{eq:tmuC}
  \alpha_i \beta_i = \beta_{i+1} \alpha_{i+1},
\end{equation}
for \( \tH_\C = \prod_{i=1}^{r-1}\GL(n_i,\C) \).  In this situation we
have the following result from~\cite{Kobak-S:finite}.

\begin{proposition}
  \label{prop:KobS}
  For any dimension vector \( \mathbf n \), the orbit under the action
  of \( \tH_\C = \prod_{i=1}^{r-1} \GL(n_i,\C) \) of a quiver
  \begin{equation*}
    0=V_0\stackrel[\beta_0]{\alpha_0}{\rightleftarrows}
    V_1\stackrel[\beta_1]{\alpha_1}{\rightleftarrows}
    V_2\stackrel[\beta_2]{\alpha_2}{\rightleftarrows}\dots
    \stackrel[\beta_{r-2}]{\alpha_{r-2}}{\rightleftarrows}V_{r-1}
    \stackrel[\beta_{r-1}]{\alpha_{r-1}}{\rightleftarrows} V_r = \C^n ,
  \end{equation*} 
  with \( \dim V_j = n_j \), which satisfies the complex moment map
  equations~\eqref{eq:tmuC} for \( \tH_\C \), is closed if and only if
  it is the direct sum of a quiver
  \begin{equation*}
    0=V_0^{(*)}
    \stackrel[\beta^{(*)}_0]{\alpha^{(*)}_0}{\rightleftarrows}
    V^{(*)}_1
    \stackrel[\beta^{(*)}_1]{\alpha_1^{(*)}}{\rightleftarrows}
    V^{(*)}_2
    \stackrel[\beta^{(*)}_2]{\alpha^{(*)}_2}{\rightleftarrows}
    \dots
    \stackrel[\beta^{(*)}_{r-2}]{\alpha^{(*)}_{r-2}}{\rightleftarrows}
    V^{(*)}_{r-1}
    \stackrel[\beta^{(*)}_{r-1}]{\alpha^{(*)}_{r-1}}{\rightleftarrows}
    V^{(*)}_r = \C^n,  
  \end{equation*} 
  where \( \alpha^{(*)}_j \) is injective and \( \beta^{(*)}_j \) is
  surjective for \( 1\leqslant j<r \) (so for some \( k \), \(
  V_j^{(*)} = 0 \) for \( 0 \leqslant j \leqslant k \)) and a quiver
  \begin{equation*}
    0 = V^{(0)}_0 \rightleftarrows
    V^{(0)}_1 \rightleftarrows
    V^{(0)}_2 \rightleftarrows \dots
    \rightleftarrows V^{(0)}_{r-1}
    \rightleftarrows V^{(0)}_r = 0 
  \end{equation*} 
  in which all maps are \( 0 \).
\end{proposition}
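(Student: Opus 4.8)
The plan is to prove the two implications separately, using throughout the Kempf--Ness description of closed orbits \cite{KN}. Since $\tmu_\C^{-1}(0)\subset M$ is an affine variety and $\tH_\C=\prod_{i=1}^{r-1}\GL(n_i,\C)$ is reductive, a $\tH_\C$-orbit in $\tmu_\C^{-1}(0)$ is closed if and only if it contains a point at which the moment map for the maximal compact subgroup $\tH\cong\prod_{i=1}^{r-1}\Un(n_i)$ vanishes, that is, a representative $(\alpha,\beta)$ for which the left-hand side of \eqref{eq:mmreal} vanishes identically (not merely its trace-free part) for each $i$; I shall call such a representative \emph{balanced}. It is also worth recording at the outset that, because all the scalars $\lambda^\C_i$ vanish under the hypothesis \eqref{eq:tmuC}, the identity \eqref{eq:ann-poly} together with the generalised-eigenspace propagation \eqref{eq:betai}--\eqref{eq:alphai} forces every $\alpha_i\beta_i$ and every $\beta_i\alpha_i$ --- in particular $X=\alpha_{r-1}\beta_{r-1}$ --- to be nilpotent.

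For sufficiency, suppose the quiver is a direct sum $(\alpha,\beta)=(\alpha^{(*)},\beta^{(*)})\oplus(\alpha^{(0)},\beta^{(0)})$ of the two types described, realised orthogonally inside $M$. Then the moment map \eqref{eq:mmreal} is block-diagonal for this splitting and vanishes identically on the zero summand, so by Kempf--Ness it is enough to show that $(\alpha^{(*)},\beta^{(*)})$ lies on a closed orbit for $\prod_{j=1}^{r-1}\GL(V_j^{(*)})$. For that I would apply the Hilbert--Mumford criterion directly: a one-parameter subgroup $(g_j(t))_j$ carries the composite $\alpha^{(*)}_{r-1}\cdots\alpha^{(*)}_j$, which is injective, to $(\alpha^{(*)}_{r-1}\cdots\alpha^{(*)}_j)g_j(t)^{-1}$, and the composite $\beta^{(*)}_j\cdots\beta^{(*)}_{r-1}$, which is surjective, to $g_j(t)(\beta^{(*)}_j\cdots\beta^{(*)}_{r-1})$. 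Diagonalising $g_j$, the first of these has a limit as $t\to 0$ only if every weight of $g_j$ is $\leqslant 0$, and the second only if every weight is $\geqslant 0$; hence $g_j$ is trivial. So no one-parameter subgroup produces a new limit point, and the orbit is closed. This is exactly the argument in the last paragraph of the proof of Lemma~\ref{lem:length2}, applied simultaneously on the $\alpha$-side and, by duality, on the $\beta$-side.

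For necessity, suppose the orbit is closed and fix a balanced representative $(\alpha,\beta)$. I would then peel off a ``zero'' subquiver one vertex at a time, beginning at the first vertex, where --- since $\alpha_0=\beta_0=0$ --- a one-parameter subgroup supported in the single factor $\GL(V_1)$ acts on $(\alpha_1,\beta_1)$ exactly as $\GL(V_1)$ acts on the length-two double quiver $V_1\rightleftarrows V_2$. Combining Lemma~\ref{lem:length2-double}, an analysis of the central $\C^*\subset\GL(V_1)$ (which shows $\alpha_1$ and $\beta_1$ must vanish together, and that injectivity of $\alpha_1$ forces surjectivity of $\beta_1$ and conversely), and the relation $\beta_1\alpha_1=0$ coming from \eqref{eq:tmuC}, closedness forces one of: $\alpha_1=\beta_1=0$; or $\alpha_1$ injective and $\beta_1$ surjective; or $V_1=\ker\alpha_1\oplus\im\beta_1$ with both summands non-zero. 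The balancing equation at $i=1$ reads $\beta_1\beta_1^*=\alpha_1^*\alpha_1$, which forces the last decomposition to be orthogonal; in the first and third cases one then checks that $\ker\alpha_1$ (respectively $V_1$) placed at the first vertex, with zero elsewhere, is a subrepresentation carrying only zero maps, whose orthogonal complement is again a subrepresentation and again a balanced representative. Splitting this off reduces us to a quiver of the same kind with strictly smaller $V_1$ in which $\beta_1$ is surjective; one then repeats, passes to the second vertex (where the same analysis applies to the length-two double quiver $V_2\rightleftarrows V_1\oplus V_3$, precisely as in the proof of Theorem~\ref{thm:symp-stab}), and so on up the quiver. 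The process terminates and yields the asserted decomposition, with $V_1,\dots,V_k$ absorbed into the zero summand for some $k\geqslant 0$.

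The step I expect to be the main obstacle --- and the reason the balanced representative is indispensable --- is the globalisation in the previous paragraph: closedness of the orbit is by itself a weak hypothesis, and without passing to a balanced representative there is in general no reason for the orthogonal complement of a locally detected ``zero'' subquiver to be a subquiver, nor for the residual quiver to inherit a closed orbit, so the induction would not even start; it is the equations $\tmu_\R=0$ that supply the orthogonality needed to propagate the node-by-node information from Lemma~\ref{lem:length2-double} into one coherent decomposition. A second, more structural point is that the acting group is $\tH_\C=\prod_{i=1}^{r-1}\GL(n_i,\C)$ rather than $\prod_{i=1}^{r-1}\SL(n_i,\C)$: it is precisely the extra central one-parameter subgroups in the $\GL$-factors that rule out the intermediate possibility ``$\alpha_i$ injective but $\beta_i$ not surjective'' --- which would be permitted for the special linear group, as in Lemma~\ref{lem:length2} --- and so pin down the rigid injective/surjective shape of the non-zero part. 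In the full-flag case $\mathbf n=(1,2,\dots,n)$ the non-zero part so obtained runs over all nilpotent Jordan types of $X$, the generic one being the ``regular nilpotent tower'' $0\to\C\to\C^2\to\dots\to\C^n$; this is how one recovers from the proposition the identification of the full-flag quotient with the nilpotent cone in $\kf_\C$.
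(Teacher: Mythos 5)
The paper's own proof simply quotes \cite[Theorem~2.1]{Kobak-S:finite} for the key equivalence (closed \( \tH_\C \)-orbit if and only if \( V_i = \ker\alpha_i \oplus \im\beta_i \) for every \( i \)) and then reads off the subquiver decomposition from \eqref{eq:tmuC}; you are in effect trying to reprove that input from scratch. Your sufficiency half is sound: the one-parameter-subgroup rigidity for the injective\slash surjective summand, obtained from the composites \( \alpha_{r-1}\dotsm\alpha_j \) and \( \beta_j\dotsm\beta_{r-1} \) exactly as in Lemmas~\ref{lem:length2} and~\ref{lem:GITs}, combined with the Kempf--Ness passage to a balanced representative and the orthogonal direct sum with the zero quiver, does give a closed \( \tH_\C \)-orbit.

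The necessity half has two problems. The smaller one: the central \( \C^*\subset\GL(V_1) \) only shows that \( \alpha_1 \) and \( \beta_1 \) vanish together; it does \emph{not} show that injectivity of \( \alpha_1 \) forces surjectivity of \( \beta_1 \), since when both maps are non-zero the central scaling has no limit in either direction. What is needed there is a non-central one-parameter subgroup of \( \GL(V_1) \), with weight \( 0 \) on \( \im\beta_1 \) and negative weight on a complement, together with the observation that \( \rank\alpha_1 \) and \( \rank\beta_1 \) are invariants of the whole \( \tH_\C \)-orbit; this is fixable and amounts to the \( \GL \)-analogue of Lemma~\ref{lem:length2-double}. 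The genuine gap is the induction \enquote{up the quiver}. At an interior vertex the vertex-local argument (indeed balancedness alone, by comparing kernels of the two positive semidefinite sides of the vanishing of the left-hand side of \eqref{eq:mmreal}) yields only the combined decomposition \( V_i = (\ker\alpha_i\cap\ker\beta_{i-1}) \oplus (\im\alpha_{i-1}+\im\beta_i) \), and after splitting off \( \ker\alpha_i\cap\ker\beta_{i-1} \) the residual quiver need not have \( \alpha_i \) injective nor \( \beta_i \) surjective: there may remain vectors \( v \) with \( \alpha_i v = 0 \) but \( \beta_{i-1}v \neq 0 \), and \( \im\alpha_{i-1} \) need not lie in \( \im\beta_i \). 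Upgrading this per-vertex condition to the individual condition \( V_i = \ker\alpha_i\oplus\im\beta_i \) --- equivalently \( \ker\alpha_i\subseteq\ker\beta_{i-1} \) and \( \im\alpha_{i-1}\subseteq\im\beta_i \) on the non-zero summand --- requires a genuinely global use of the equations \eqref{eq:tmuC} along the quiver, and this is precisely the substantive content of the Kobak--Swann theorem the paper cites; your sentence \enquote{the same analysis applies \dots\ and so on up the quiver \dots\ yields the asserted decomposition} does not supply it, so as written the peeling procedure terminates with a strictly weaker decomposition than the one asserted in the Proposition.
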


\begin{proof}
  The arguments of \cite[Theorem~2.1]{Kobak-S:finite} show that for \(
  (\alpha,\beta) \in \tmu_\C^{-1}(0) \) the closed \( \tH_\C \)-orbit
  condition corresponds to having direct sums \( V_i = \ker \alpha_i
  \oplus \im \beta_i \).  The complex moment map
  equations~\eqref{eq:tmuC}, imply that this is a direct sum
  decomposition into subquivers, so the maps are zero on the subquiver
  with \( V_i^{(0)} = \ker \alpha_i \), and have the desired
  injectivity\slash surjectivity on the subquiver with \( V_i^{(*)} =
  \im \beta_i \).
\end{proof}

\begin{remark}
  \label{rem:GLdecomp}
  Let us now decompose a strictly ordered quiver representation into a
  sum of subquivers determined by the generalised eigenspaces of the
  compositions~\( \alpha_i\beta_i \).  These subquivers may be
  contracted as in Remarks\nobreakspace \ref {rem:contract-a} and\nobreakspace  \ref {rem:contract-b} to shorter
  quivers with each \( \lambda^\C_i = 0 \).  The contracted subquivers
  thus satisfy the complex moment map equations~\eqref{eq:tmuC} for
  the groups \( \prod_i \GL(V_{i,j}) \) which correspond in their
  situations to \( \tH_\C \).  They lie in closed orbits for these
  groups which play the role of \( \tH_\C \) \emph{provided that} each
  subquiver lies in a closed orbit for the action of \( \prod_i
  \GL(V_{i,j}) \).  If this is the case, we may now apply
  Proposition\nobreakspace \ref {prop:KobS} to the contracted subquivers and deduce, with the
  help of \protect \MakeUppercase {E}xample\nobreakspace \ref {ex:contract}, that the original quiver is the direct
  sum of a quiver in which every \( \alpha_i \) is injective and every
  \( \beta_i \) is surjective and quivers of the form
  \begin{equation}
    \label{eq:ab}
    0 \rightleftarrows \C^m \stackrel[b_1]{a_1}{\rightleftarrows} \C^m
    \stackrel[b_2]{a_1}{\rightleftarrows} \dots
    \stackrel[b_{p-2}]{a_{p-2}}{\rightleftarrows} \C^m 
    \stackrel[b_{p-1}]{a_{p-1}}{\rightleftarrows} \C^m \rightleftarrows 0,  
  \end{equation}
  where the maps are multiplication by non-zero scalars \( a_j \) and
  \( b_j \) satisfying
  \begin{equation}
    \label{eq:ab-lambda}
    a_j b_j = \sum_{k=j+1}^p \lambda_k^\C.
  \end{equation}
  Unfortunately, if the original quiver lies in a closed \( H_\C
  \)-orbit but not a closed \( \tH_\C \)-orbit, we cannot deduce
  directly that the subquivers lie in closed orbits for the action of
  \( \prod_i \GL(V_{i,j}) \).  However we will see that we can get
  around this difficulty by making suitable choices of complex
  structures.
\end{remark}

\begin{remark}
  \label{rem:residual}
  Note that for a quiver of the form~\eqref{eq:ab} we may use the \(
  \GL(m,\C)^p \) action to set the \( a_i = 1 \), so the \( b_i \) are
  now determined by the equations.  Such a quiver will be left
  invariant by any \( g \in \GL(m,\C)^p \) with \( g_1 = \dots =
  g_p \in \GL(m,\C) \), and hence for each such summand we will
  pick up a residual circle action on the injective\slash surjective
  quiver.
\end{remark}

We introduce some notation that will also be useful later for
describing an augmentation process for quivers.

\begin{definition}
  \label{def:subrelation}
  Let \( S \) be a relation on \( \{1,\dots,r-1\} \).  This is the same
  as a subset of \( \{1,\dots,r-1\} \times \{1,\dots,r-1\} \).

  Such an \( S \) is a \emph{subrelation} of another relation \( S' \)
  if \( (i,j) \in S \) implies \( (i,j) \in S' \) for all \( i,j \in
  \{1,\dots,r-1\} \).
\end{definition}

Note that any \( S \) defines a subrelation \( \leqslant_S \) of \(
\leqslant \) by
\begin{equation*}
  {\leqslant_S} = \{(i,j) \in S: i\leqslant j\}
\end{equation*}
This is the maximal subrelation of~\( \leqslant \) contained in~\( S
\).

\begin{definition}
  \label{def:Hs}
  To any relation \( S \) on \( \{1,\dots,r-1\} \) we associate the
  subtorus \( T_S \) of \( \tT = T^{r-1} = \R^{r-1}/\Z^{r-1}
  \) whose Lie algebra is \( \lie t_S = \Span \{ e_{ij} =
  \sum_{k=i}^j e_k : i \leqslant_S j\} \).  We have an exact sequence
  \begin{equation*}
    1 \longrightarrow H \longrightarrow
    \tH \stackrel{\varphi}{\longrightarrow} 
    \tT \longrightarrow 1,
  \end{equation*}
  where the \( i \)th component of \( \varphi \) is the determinant
  map \( \Un(n_i) \to S^1 \).  We define \( H_S \) to be the pre-image
  \begin{equation*}
    H_S = H_S(\mathbf n) = \varphi^{-1}(T_S).
  \end{equation*}
  In particular, \( H_\varnothing = H \) and \( H_\leqslant = \tH \).
\end{definition}

Given any \( \lambda = (\lambda_1,\dots,\lambda_{r-1}) \in
(\R^3)^{r-1} \), we define a relation \( \leqslant_\lambda \) on \(
\{1,\dots,r-1\} \) by
\begin{equation*}
  i \leqslant_\lambda j \iff (i\leqslant j \ \text{and}\ 
  \sum_{k=i}^j \lambda_k = 0 \ \text{in}\ \R^3 ).
\end{equation*}

\begin{proposition}
  \label{prop:gl-decomp}
  Suppose \( (\alpha,\beta) \in M(\mathbf n) \) satisfies the
  hyperk\"ahler moment map equations
  \eqref{eq:mmcomplex}--\eqref{eq:mmreal} for~\( H \).  Assume that \(
  \mathbf n \) is strictly ordered.  Then the group \( \SUr \) of
  equation~\eqref{eq:SU2-rot} contains an element that moves \(
  (\alpha,\beta) \) to a quiver that is a direct sum of subquivers,
  one, \( (\tilde\alpha,\tilde\beta) \), with all \(
  \tilde\alpha \)'s injective and all \( \tilde\beta \)'s surjective,
  and the others of the scalar form \eqref{eq:ab}, with \( a_i \), \(
  b_i \) non-zero satisfying~\eqref{eq:ab-lambda}.
\end{proposition}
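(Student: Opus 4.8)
The plan is to use the circle of complex structures, i.e.\ the group $\SUr$ of \eqref{eq:SU2-rot}, to move $(\alpha,\beta)$ to a quiver adapted to a \emph{generic} choice of complex structure, and then to run the generalised eigenspace decomposition of Lemma \ref{lem:evals} together with the contraction procedure of Remarks \ref{rem:contract-a}--\ref{rem:contract-b} and the Kobak--Swann classification (Proposition \ref{prop:KobS}). First I would record the basic properties of the $\SUr$-action: since it is induced by right quaternion multiplication on $M=M(\mathbf n)$ (including the transformation \eqref{eq:j}), it commutes with the left $\tH$-action, preserves the zero set of the full hyperk\"ahler moment map for $H$, and acts on the two-sphere of complex structures through the double cover $\SUr\to\SO(3)$. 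Hence for each $q\in\SUr$ the quiver $q\cdot(\alpha,\beta)$ again satisfies \eqref{eq:mmcomplex}--\eqref{eq:mmreal} for $H$, and the $\R^3$-valued parameters $\lambda_i=(\lambda_i^\R\ii,\lambda_i^\C)$, $i=1,\dots,r-1$, are replaced by $R_q\lambda_i$ for one common rotation $R_q$. So it is enough to prove the statement after replacing $(\alpha,\beta)$ by $q\cdot(\alpha,\beta)$ for a suitable $q$.

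The choice of $q$ is the crux. I would consider the finitely many partial sums $\sigma_{ij}=\sum_{k=i}^{j}\lambda_k\in\R^3$ for $1\le i\le j\le r-1$. For each nonzero $\sigma_{ij}$, the set of $q$ for which $R_q\sigma_{ij}$ lies along the real axis of the fixed complex structure --- equivalently, for which $\sum_{k=i}^{j}\lambda_k^\C=0$ after applying $q$ --- is a proper closed subset of $\SUr$, and since $\SUr\to\SO(3)$ is onto one can pick $q$ outside all of them. After this replacement one has, for all $i\le j$, that $\sum_{k=i}^{j}\lambda_k^\C=0$ holds if and only if $\sum_{k=i}^{j}\lambda_k=0$ in $\R^3$; in the notation following Definition \ref{def:Hs} this says $\leqslant_{\lambda^\C}\,=\,\leqslant_\lambda$, and in particular a vanishing partial sum of the $\lambda_k^\C$ forces the corresponding partial sum of the $\lambda_k^\R$ to vanish too.

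Now I would decompose and contract. By \eqref{eq:mmcomplex}, \eqref{eq:betai}, \eqref{eq:alphai} and the discussion preceding Lemma \ref{lem:evals}, $(\alpha,\beta)$ is a direct sum of subquivers on which each composition $\alpha_i\beta_i$ has a single eigenvalue, and within each such subquiver every edge carrying a nonzero eigenvalue is an isomorphism. Contract all isomorphism edges using Remarks \ref{rem:contract-a} and \ref{rem:contract-b}. Lemma \ref{lem:evals} shows that every contracted run of edges, and every surviving edge, has complex parameter $0$, so the contracted subquivers satisfy the complex moment map equations \eqref{eq:tmuC} for the groups $\prod_i\GL(V_{i,j})$; and by the choice of $q$ each of these vanishing complex parameters arises from a genuinely vanishing $\R^3$-parameter, so the real parameters of the contracted subquivers also vanish. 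Using in addition that $(\alpha,\beta)$, having vanishing full hyperk\"ahler moment map for $H$, has vanishing full hyperk\"ahler moment map for the larger group $H_{\leqslant_\lambda}$, one concludes that each contracted subquiver lies in a closed $\prod_i\GL(V_{i,j})$-orbit. Proposition \ref{prop:KobS} then applies, writing each contracted subquiver as a direct sum of a quiver with injective $\alpha$'s and surjective $\beta$'s and a quiver with zero maps; undoing the contractions and invoking Example \ref{ex:contract} turns the zero summands into scalar quivers of the shape \eqref{eq:ab} satisfying \eqref{eq:ab-lambda}, while the injective/surjective summands reassemble into one subquiver $(\tilde\alpha,\tilde\beta)$ with all $\tilde\alpha$ injective and all $\tilde\beta$ surjective. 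This gives $q\cdot(\alpha,\beta)$ in the asserted form.

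The main obstacle is exactly the closed-orbit claim for the (contracted) subquivers: as Remark \ref{rem:GLdecomp} explains, it does not follow directly from closedness of the $H_\C$-orbit of the original quiver, and repairing this is the whole point of bringing in $\SUr$. The two delicate steps are: (i) showing that the generic choice of $q$ forces the only vanishing partial sums of the $\lambda_k^\C$ to be the honest ones, so that the contracted subquivers land at hyperk\"ahler moment level zero and Proposition \ref{prop:KobS} becomes applicable; and (ii) handling the fact that the generalised eigenspace decomposition need not be orthogonal, so that closedness must be argued through the moment data --- via polystability for the complexification of $H_{\leqslant_\lambda}$ --- rather than by restricting the real moment map to each subquiver.
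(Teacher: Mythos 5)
Your proposal is correct and follows essentially the same route as the paper: a generic element of \( \SUr \) is used to arrange that a partial sum \( \sum_{k=i}^j\lambda_k^\C \) vanishes only when the full \( \R^3 \)-valued sum \( \sum_{k=i}^j\lambda_k \) vanishes, so the rotated quiver lies at level zero of the full hyperk\"ahler moment map for \( H_{\leqslant_\lambda} \), giving a closed \( (H_{\leqslant_\lambda})_\C \)-orbit, after which the generalised eigenspace decomposition, the contractions of Remarks \ref{rem:contract-a}--\ref{rem:contract-b}, Proposition \ref{prop:KobS} and Example \ref{ex:contract} yield the asserted splitting, exactly as in Remark \ref{rem:GLdecomp} and the paper's proof.
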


\begin{proof}
  Let \( \lambda_j = (\lambda_j^\R, \lambda_j^\C) \in \R \times \C =
  \R^3 \) be the values from the hyperk\"ahler moment map equations.
  Then
  \begin{equation*}
    (\lambda_1,\dots,\lambda_{r-1}) \in (\R^3)^{r-1} \cong \tf^{r-1}
    \otimes \R^3 
  \end{equation*}
  can be identified with the value of the hyperk\"ahler moment map for
  the action of the centre \( T^{r-1} = Z(\tH) \) of \( \tH \).

  The group \( \SUr \) acts on \( \lambda_k \) as an \( \SO(3)
  \)-rotation.  Applying a generic element we may therefore ensure
  that
  \begin{equation}
    \label{eq:lambda-regular}
    i \leqslant_\lambda j \iff (i \leqslant j \ \text{and} \
    \sum_{k=i}^j \lambda_k^\C = 0).
  \end{equation}
  This implies that the rotated quiver~\( (\alpha,\beta) \) satisfies
  the full hyperk\"ahler moment map equations (at level \( 0 \)) for
  the subgroup \( H_{\leqslant_\lambda} \) of \( \tH \).  Its orbit
  under the action of the complexification~\(
  (H_{\leqslant_\lambda})_\C \) is thus closed.  Decomposing with
  respect to generalised eigenspaces and arguing as in
  Remark\nobreakspace \ref {rem:GLdecomp}, we see that each contracted subquiver is closed
  under the action of \( \prod_i \GL(V_{i,j}) \).  By
  Remark\nobreakspace \ref {rem:GLdecomp}, these subquivers have the claimed form.
\end{proof}

Now we have seen that injective\slash surjective quivers are relevant,
we make the following observation about stabilisers:

\begin{lemma}
  \label{lem:SL-stabiliser}
  If a quiver \( (\alpha,\beta) \in M(\mathbf n) \) has the property
  that for all~\( i \), either \( \alpha_i \) is injective or \(
  \beta_i \) is surjective, then the stabiliser for the \( \tH_\C \)
  action is trivial.
\end{lemma}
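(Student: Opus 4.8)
The plan is to solve the stabiliser equations one edge at a time, descending from the node $V_r = \C^n$ (on which $\tH_\C$ acts trivially, because $g_r = 1$) towards $V_1$. Recall $\tH_\C = \prod_{i=1}^{r-1}\GL(n_i,\C)$, so an element $(g_1,\dots,g_{r-1})$ fixing $(\alpha,\beta)$ satisfies, for each $i = 1,\dots,r-1$,
\[
  g_{i+1}\alpha_i g_i^{-1} = \alpha_i, \qquad g_i\beta_i g_{i+1}^{-1} = \beta_i,
\]
with the convention $g_r = I$.

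First I would dispose of the base case $i = r-1$. Since $g_r = I$, the two equations read $\alpha_{r-1}g_{r-1}^{-1} = \alpha_{r-1}$ and $g_{r-1}\beta_{r-1} = \beta_{r-1}$; equivalently $\im(g_{r-1}^{-1} - I) \subseteq \ker\alpha_{r-1}$, and $g_{r-1}$ fixes $\im\beta_{r-1}$ pointwise. By hypothesis either $\alpha_{r-1}$ is injective, so $\ker\alpha_{r-1} = 0$ and $g_{r-1}^{-1} = I$, or $\beta_{r-1}$ is surjective, so $\im\beta_{r-1} = \C^{n_{r-1}}$ and $g_{r-1} = I$; in either case $g_{r-1} = I$. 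Then I would iterate: assuming $g_{i+1} = I$, the equations at edge $i$ reduce to $\alpha_i g_i^{-1} = \alpha_i$ and $g_i\beta_i = \beta_i$, and the very same dichotomy ($\alpha_i$ injective, or $\beta_i$ surjective) forces $g_i = I$. Descending all the way to $i = 1$ gives $g_i = I$ for every $i$, so the stabiliser is trivial.

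This argument is short and I do not expect a genuine obstacle; the two points to be careful about are to anchor the induction at the unacted-on node $V_r$, and to notice that at each edge the hypothesis supplies whichever of the two (mutually dual) stabiliser relations is needed to kill $g_i$, so exactly one of them always suffices. One could instead phrase this via the decomposition of an injective/surjective quiver into blocks, but the direct descent above is cleaner.
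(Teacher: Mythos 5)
Your proof is correct and follows exactly the route of the paper's own argument: write down the stabiliser equations (with $g_r = I$) and induct downward from the top edge, using at each stage whichever of injectivity of $\alpha_i$ or surjectivity of $\beta_i$ is available to force $g_i = I$. The paper merely states this induction in one line, whereas you have spelled out the base case and the inductive step, so there is nothing to correct.
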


\begin{proof}
  If \( g \in \prod_{i=1}^{r-1} \GL(n_i,\C) \) stabilises the quiver,
  then we have
  \begin{equation*}
    g_{i+1} \alpha_i = \alpha_i g_i, \qquad g_i \beta_i = \beta_i g_{i+1},
  \end{equation*}
  for \( i=1, \dots, r-2 \), with \( \alpha_{r-1} = \alpha_{r-1}
  g_{r-1} \) and \( g_{r-1} \beta_{r-1} = \beta_{r-1} \).  Our
  injectivity\slash surjectivity assumption now means we can work
  inductively down from the top of the quiver to deduce each \( g_i \)
  is the identity element.
\end{proof}

\begin{lemma}
  \label{lem:GITs}
  Let \( S \) be any relation on \( \{1,\dots,r-1\} \).  Suppose that
  a quiver \( (\alpha,\beta) \in M(\mathbf n) \) has each \( \alpha_i
  \) injective and each \( \beta_i \) surjective.  Then it is stable
  in the sense of GIT for the action of the complexification \(
  (H_S)_\C \) of \( H_S \).

  In the case \( H_S = H \), this is true under the weaker assumption
  that all \( \alpha_i \) are injective or all \( \beta_i \)
  surjective.
\end{lemma}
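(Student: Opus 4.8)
The plan is to verify the two defining conditions of GIT stability, finite stabiliser and closed orbit, via the Hilbert--Mumford numerical criterion: for a reductive group $G$ acting on an affine variety it suffices to check that the stabiliser of $(\alpha,\beta)$ is finite and that no nontrivial one-parameter subgroup $\lambda\colon\C^*\to G$ has $\lim_{t\to 0}\lambda(t)\cdot(\alpha,\beta)$ existing in the ambient space (if such a limit existed for a nontrivial $\lambda$ and the stabiliser were finite, the limit would lie outside the orbit by Kempf's theorem, so the orbit could not be closed). The stabiliser is immediate: the hypothesis that every $\alpha_i$ is injective and every $\beta_i$ is surjective, and likewise the weaker hypothesis in the case $H_S=H$, implies that for each $i$ either $\alpha_i$ is injective or $\beta_i$ is surjective, so Lemma~\ref{lem:SL-stabiliser} makes the stabiliser in $\tH_\C$, hence in the subgroup $(H_S)_\C$, trivial.

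For the numerical criterion, fix a one-parameter subgroup $\lambda$ of $(H_S)_\C\subseteq\tH_\C=\prod_{i=1}^{r-1}\GL(n_i,\C)$ and decompose each $V_i=\C^{n_i}$ into $\lambda$-weight spaces $V_i=\bigoplus_\mu E_i(\mu)$, noting that $V_r=\C^n$ is entirely of weight zero since $g_r=1$. The block of $\alpha_i$ sending $E_i(\mu)$ to $E_{i+1}(\mu')$ scales by $t^{\mu'-\mu}$ under $\lambda(t)$, the top map $\alpha_{r-1}$ scaling its block on $E_{r-1}(\mu)$ by $t^{-\mu}$; dually the block of $\beta_i$ sending $E_{i+1}(\mu')$ to $E_i(\mu)$ scales by $t^{\mu-\mu'}$, with $\beta_{r-1}$ scaling its block into $E_{r-1}(\mu)$ by $t^{\mu}$. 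Hence existence of $\lim_{t\to0}\lambda(t)\cdot(\alpha,\beta)$ forces a nonnegative exponent on every nonzero block. One then argues by downward induction on $i$, as at the end of the proof of Lemma~\ref{lem:length2}: injectivity of $\alpha_i$ means $E_i(\mu)$ maps nontrivially into $V_{i+1}$ for every $\mu$, so with the inductive hypothesis that all weights of $g_{i+1}$ are $\leqslant 0$ (the base case being the weights of $g_{r-1}$, forced $\leqslant 0$ by $\alpha_{r-1}$ because $V_r$ has weight zero) we get that all weights of $g_i$ are $\leqslant 0$; dually, surjectivity of $\beta_i$ gives that all weights of $g_i$ are $\geqslant 0$.

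When both families of hypotheses hold, the two inductions together force every $g_i$ to have all weights zero, so $\lambda$ is trivial, and no special-linear constraint is used, which is exactly why the argument covers the full subtorus $T_S$. When instead $H_S=H$, so $(H_S)_\C=\prod_{i=1}^{r-1}\SL(n_i,\C)$, only one of the two inductions is available and it bounds the weights of each $g_i$ on one side only, but then the special-linear relations $\sum_\mu \mu\dim E_i(\mu)=0$ force all these weights to vanish, and again $\lambda$ is trivial. In either case no nontrivial one-parameter subgroup of $(H_S)_\C$ admits a limit, so the orbit is closed, and together with the stabiliser computation the quiver is stable. The only point needing care, and there is no serious obstacle, is keeping the $\alpha$- and $\beta$-inductions separate until the last step and handling the end maps $\alpha_{r-1},\beta_{r-1}$ (where $V_r$ has trivial weight because $g_r=1$) as the base case; this is the same weight bookkeeping already carried out for the symplectic quivers of Section~\ref{sec:symplectic-quivers}, and it is what makes transparent why the general $T_S$ genuinely needs both hypotheses while $H_S=H$ needs only one.
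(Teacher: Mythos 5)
Your argument is correct, but it takes a genuinely different route from the paper's. You verify stability directly through the affine Hilbert--Mumford criterion: decomposing each \( \C^{n_i} \) into weight spaces of a one-parameter subgroup (with \( V_r = \C^n \) of weight zero since \( g_r = 1 \)), you use injectivity of the \( \alpha_i \) in a downward induction starting from \( \alpha_{r-1} \) to force all weights \( \leqslant 0 \), and surjectivity of the \( \beta_i \) to force all weights \( \geqslant 0 \); with both hypotheses the weights vanish with no determinant constraint, which is why every \( (H_S)_\C \subseteq \tH_\C \) is covered, while for \( H_\C = \prod_{i=1}^{r-1}\SL(n_i,\C) \) a one-sided bound plus the trace-zero condition \( \sum_\mu \mu \dim E_i(\mu) = 0 \) suffices. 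This is essentially the paper's proof of Lemma\nobreakspace \ref {lem:length2} propagated along the whole quiver, and your weight bookkeeping and base cases at \( \alpha_{r-1},\beta_{r-1} \) are accurate. The paper's own proof of Lemma\nobreakspace \ref {lem:GITs} avoids one-parameter subgroups altogether: it notes that the non-zero maps \( \wedge^{n_k}(\alpha_{r-1}\dots\alpha_k) \) and \( \wedge^{n_k}(\beta_k\dots\beta_{r-1}) \) transform under \( (H_S)_\C \) by a character and its inverse (and are invariant under \( H_\C \)), hence remain non-zero on the orbit closure; so every quiver in the closure still has all \( \alpha_i \) injective and all \( \beta_i \) surjective, hence trivial stabiliser by Lemma\nobreakspace \ref {lem:SL-stabiliser}, and the orbit is closed because its closure consists of orbits of maximal dimension. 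Your approach makes the role of the \( \SL \) versus \( \GL \) constraints transparent; the paper's makes the persistence of injectivity\slash surjectivity on the orbit closure explicit and needs no case analysis. One minor imprecision: your parenthetical justification of the criterion argues the wrong direction (existence of a limit precludes stability), whereas what you actually use is that absence of limits along all nontrivial one-parameter subgroups forces the orbit to be closed --- the affine Hilbert--Mumford\slash Birkes statement, i.e.\ the same numerical criterion the paper invokes from \cite{MFK} in Lemma\nobreakspace \ref {lem:length2} --- so this is a matter of citing the right direction of a standard theorem, not a gap in the argument.
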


\begin{proof}
  Note that, by the argument of Lemma\nobreakspace \ref {lem:SL-stabiliser}, the stabiliser
  in \( \tH_\C = \prod_{i=1}^{r-1} \GL(n_i,\C) \) is trivial.
  Moreover for \( 1 \leqslant k < r \) the maps
  \begin{gather*}
    \wedge^{n_k} (\alpha_{r-1} \alpha_{r-2} \dots
    \alpha_k) \colon \wedge^{n_k} \C^{n_k} \to \wedge^{n_k}
    \C^n\quad\text{and} \\ 
    \wedge^{n_k} (\beta_k \dots  \beta_{r-2} 
    \beta_{r-1}) \colon \wedge^{n_k} \C^n \to \wedge^{n_k} \C^{n_k}
  \end{gather*}
  are both non-zero, and under the action of \( (H_S)_\C \) one of
  them is multiplied by a character, while the other is multiplied by
  its inverse.  This means that these maps are non-zero for every
  quiver in the closure of the \( (H_S)_\C \)-orbit, and hence that
  every quiver in the closure of the \( (H_S)_\C \)-orbit also has
  each \( \alpha_i \) injective and each \( \beta_i \) surjective, and
  hence has trivial stabiliser. So the orbit is closed of maximal
  dimension, as required.

  If \( H_S = H \), this argument works under the weaker assumption as
  the exterior power maps are all invariant under the \( H_\C \)
  action.
\end{proof}

Returning to the endomorphism \( X = \alpha_{r-1}\beta_{r-1} \)
associated to a quiver \( (\alpha,\beta) \), we now note that
\eqref{eq:ann-poly} is actually the minimum polynomial of \( X \) if
we impose appropriate non-degeneracy conditions on the quiver diagram.

\begin{proposition}
  Suppose \( \mathbf n \in \Z^r_{>0} \) is strictly ordered and \(
  (\alpha,\beta) \in M(\mathbf n) \) satisfies the complex moment map
  equations~\eqref{eq:mmcomplex} for~\( H_\C \).  If each \( \alpha_i
  \) is injective and each \( \beta_i \) surjective, then no
  polynomial of degree less than \( r \) annihilates \( X \), and
  hence
  \begin{equation*}
    x (x + \nu_{r-1}) \dots (x + \nu_1)
  \end{equation*}
  is the minimum polynomial of~\( X \).
\end{proposition}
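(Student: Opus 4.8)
The plan is to build on the annihilating polynomial already produced in~\eqref{eq:ann-poly}: we know $p(X)=0$ for $p(x)=x(x+\nu_{r-1})\dots(x+\nu_1)$, so the minimum polynomial $m_X$ of~$X$ divides~$p$, and the whole problem is to show that it is \emph{not} a proper divisor. Writing $m_X=\prod_\sigma(x-\sigma)^{e_\sigma}$ over the distinct roots $\sigma$ of~$p$, so that $e_\sigma$ is the size of the largest Jordan block of~$X$ at~$\sigma$ and $e_\sigma\le m_\sigma$, where $m_\sigma$ is the multiplicity of~$\sigma$ in~$p$, it will be enough to prove the reverse inequality $e_\sigma\ge m_\sigma$ for every~$\sigma$. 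Equivalently, for each~$\sigma$ one shows that $\dim\ker(X-\sigma)^l$ is still strictly increasing as $l$ passes from $m_\sigma-1$ to $m_\sigma$, since the first value of $l$ at which this dimension stabilises is exactly~$e_\sigma$.

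The one input that uses the hypotheses on the quiver is a rank count for the initial factors of~$p$. I would set $p_k(x)=x(x+\nu_{r-1})\dots(x+\nu_{r-k+1})$ for $0\le k\le r$, the monic degree-$k$ left segment of $p=p_r$ (so $p_0=1$), and observe that the relation $X_{k+1}=X_k(X+\nu_{r-k})$ recorded in deriving~\eqref{eq:ann-poly}, together with $X_1=X$, gives $p_k(X)=X_k$ for $1\le k\le r-1$ (the $X_k$ of~\eqref{eq:defXk}), while $p_0(X)=I$ and $p_r(X)=0$ by~\eqref{eq:ann-poly}. For $1\le k\le r-1$ we have $X_k=(\alpha_{r-1}\dots\alpha_{r-k})\circ(\beta_{r-k}\dots\beta_{r-1})$; since each $\alpha_i$ is injective and each $\beta_i$ surjective, the second composite surjects $\C^n$ onto $\C^{n_{r-k}}$ and the first injects $\C^{n_{r-k}}$ into~$\C^n$, whence $\rank p_k(X)=n_{r-k}$. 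The same value of the rank holds in the two boundary cases, so $\dim\ker p_k(X)=n-n_{r-k}$ for all $0\le k\le r$.

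With this in hand the conclusion is pure linear algebra on the single operator~$X$. List the roots of~$p$ in the order their factors appear: $\rho_1=0$ and $\rho_k=-\nu_{r-k+1}$ for $2\le k\le r$, so that $p_k=p_{k-1}\,(x-\rho_k)$. Given a distinct root~$\sigma$ of~$p$, take $k$ to be the position of its last occurrence among $\rho_1,\dots,\rho_r$; then $p_{k-1}=(x-\sigma)^{m_\sigma-1}h$ and $p_k=(x-\sigma)^{m_\sigma}h$ with $h(\sigma)\ne0$, so by the Chinese remainder theorem for $\C[x]$ one has $\ker p_{k-1}(X)=\ker(X-\sigma)^{m_\sigma-1}\oplus\ker h(X)$ and $\ker p_k(X)=\ker(X-\sigma)^{m_\sigma}\oplus\ker h(X)$. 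Subtracting dimensions and inserting the formula for $\dim\ker p_j(X)$ gives
\begin{equation*}
  \dim\ker(X-\sigma)^{m_\sigma}-\dim\ker(X-\sigma)^{m_\sigma-1}
  = \dim\ker p_k(X)-\dim\ker p_{k-1}(X) = n_{r-k+1}-n_{r-k} > 0,
\end{equation*}
the strict inequality being precisely where the hypothesis that $\mathbf n$ is strictly ordered enters (and it also shows, incidentally, that every root of~$p$ is genuinely an eigenvalue of~$X$). Hence $e_\sigma\ge m_\sigma$, and as this holds for every root we obtain $m_X=p$, which has degree~$r$; in particular no polynomial of degree less than~$r$ annihilates~$X$.

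I do not expect a real obstacle here beyond bookkeeping: the substantive content is already present in the recursion of the preceding lemma and in the injective/surjective structure (as exploited earlier for stability), so the work is to assemble it correctly. The one point to handle with care is the identification $X_k=p_k(X)$ — this is valid because the $X_k$ are obtained from~$X$ by the moment-map recursion and so are honest polynomials in~$X$, not merely because of the quiver composition — together with the two boundary cases $k=1$ (where $p_0=1$) and $k=r$ of the final display, both of which go through unchanged.
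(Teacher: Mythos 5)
Your argument is correct, but it follows a genuinely different route from the paper's. The paper supposes a monic annihilating polynomial of degree \( k \leqslant r-1 \), uses \eqref{eq:Xformula} to write it as \( X_k + c_{k-1}X_{k-1} + \dots + c_1 X + c_0 I \), and then constructs test vectors \( w_i \in \C^n \) (with \( \beta_{i-1}\beta_i\dots\beta_{r-1}w_i = 0 \) but \( \beta_i\dots\beta_{r-1}w_i \neq 0 \), using surjectivity of the \( \beta \)'s and the strict ordering) so that \( X_j w_i = 0 \) exactly when \( j \geqslant r-i+1 \); applying the relation successively to \( w_r,\dots,w_2 \) kills the coefficients \( c_0,\dots,c_{k-1} \) and forces \( X_k = 0 \), contradicting \( \rank X_k = n_{r-k} \). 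You instead observe that the partial products satisfy \( p_k(X) = X_k \) exactly (which is indeed justified by the recursion \( X_{k+1} = X_k(X+\nu_{r-k}) \) with \( X_1 = X \)), compute \( \dim\ker p_k(X) = n - n_{r-k} \) from the same injectivity/surjectivity rank count, and then run a coprime-factor kernel decomposition to show the largest Jordan block of \( X \) at each root \( \sigma \) of \( p \) has size at least the multiplicity of \( \sigma \) in \( p \), whence \( p \) is the minimum polynomial. Both proofs rest on the same two inputs (the identification of \( X_k \) with a degree-\( k \) polynomial in \( X \), and \( \rank X_k = n_{r-k} \)), and the strict ordering enters yours through \( n_{r-k+1} - n_{r-k} > 0 \) rather than through the existence of the kernel vectors \( w_i \). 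The paper's version is shorter given \eqref{eq:Xformula} and deals with an arbitrary low-degree annihilator head-on; yours yields somewhat finer structural information along the way (every root of \( p \) really is an eigenvalue, its largest Jordan block has size equal to its multiplicity in \( p \), and the kernel dimensions of the partial products are pinned down), at the cost of the bookkeeping with last occurrences of repeated roots, which you handle correctly.
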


\begin{proof}
  It follows from our formula \eqref{eq:Xformula} that if \( p(x) \)
  is a degree \( k \) monic polynomial then \( p(X) \) is a linear
  combination of \( X_k \), \( X_{k-1} \),\dots, \( X_1 = X \) and \(
  I \), with the coefficient of \( X_k \) being \( 1 \).  We write \(
  p(X) = X_k + c_{k-1} X_{k-1} + \dots + c_1 X + c_0 I \).

  We may choose vectors \( w_i \) in \( V_r \) for \( i=2, \dots, r \)
  such that, for each \( i \), the vector \( w_i \) is killed by \(
  \beta_{i-1} \beta_i \dots \beta_{r-1} \) but not by \( \beta_i \dots
  \beta_{r-1} \). To see this, let \( x_i \) be a non-zero element of
  \( \ker \beta_{i-1} \) and then, using surjectivity of the \(
  \beta_j \), let \( w_i \) satisfy \( \beta_i \dots \beta_{r-1} w_i =
  x_i \).

  If the \( \alpha_i \) are injective, then, recalling the definition
  \eqref{eq:defXk} of \( X_k \), we see that \( X_j \) kills \( w_i \) if
  and only if \( j \geqslant r-i+1 \).

  Now if \( k \leqslant r-1 \) and
  \begin{equation*}
    p(X) = X_k + c_{k-1} X_{k-1} + \dots + c_1 X + c_0 = 0
  \end{equation*}
  then successively applying this equation to \( w_r, w_{r-1}, \dots,
  w_2 \) yields that each of \( c_0, c_1, \dots, c_{k-1} \) is
  zero. Hence \( X_k \) is zero, which is impossible since (using
  injectivity of \( \alpha_i \) and surjectivity of \( \beta_i \)) we
  have that the rank of \( X_k \) equals \( n_{r-k}=\dim V_{r-k} \).
\end{proof}

Actually we can get all the coadjoint orbits by considering quivers of
this type.

\begin{proposition}
  \label{prop:allorbits}
  Every element of \( \sln(n,\C) \) may be obtained from a quiver
  \begin{equation*}
    0 \stackrel[\beta_0]{\alpha_0}{\rightleftarrows}
    \C^{n_1}\stackrel[\beta_1]{\alpha_1}{\rightleftarrows}
    \C^{n_2}\stackrel[\beta_2]{\alpha_2}{\rightleftarrows}
    \dots 
    \stackrel[\beta_{r-2}]{\alpha_{r-2}}{\rightleftarrows}
    \C^{n_{r-1}} 
    \stackrel[\beta_{r-1}]{\alpha_{r-1}}{\rightleftarrows}
    \C^{n_r} = \C^n 
  \end{equation*}
  with \( 0 < n_1 < n_2 < \dots < n_r = n \), all \( \alpha_i \)
  injective, all \( \beta_i \) surjective and satisfying the complex
  moment map equations~\eqref{eq:mmcomplex} for~\( H_\C \).
\end{proposition}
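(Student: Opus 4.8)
The plan is to realise $Z$ explicitly as $(X)_0$ for a suitable quiver. The first step is to shift the problem to a single endomorphism: choose any eigenvalue $-c$ of $Z$ and put $X := Z + cI_n \in \gl(n,\C)$. Then $(X)_0 = X - \tfrac1n\tr(X)I_n = Z$ by~\eqref{eq:X0}, and now $0$ is among the eigenvalues of $X$. So it suffices to produce, for an arbitrary $X \in \gl(n,\C)$ having $0$ as an eigenvalue, a quiver $(\alpha,\beta)\in M(\mathbf n)$ with strictly ordered dimension vector, all $\alpha_i$ injective, all $\beta_i$ surjective, satisfying~\eqref{eq:mmcomplex} for $H_\C$, and with $\alpha_{r-1}\beta_{r-1}=X$.

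Next I would read off the Jordan data of $X$ and use it to fix the free parameters. Let $\tau_1,\dots,\tau_m$ be the distinct eigenvalues of $X$ (one of them $0$), let $s_j$ be the size of the largest Jordan block at $\tau_j$, and set $r := s_1+\dots+s_m$, the degree of the minimal polynomial of $X$; then $1\leqslant r\leqslant n$. Writing $\nu_i := \sum_{j=i}^{r-1}\lambda^\C_j$ with $\nu_r:=0$ (a free bijective reparametrisation of $(\lambda^\C_1,\dots,\lambda^\C_{r-1})$), I would choose the $\nu_i$ so that the multiset $\{0,-\nu_{r-1},\dots,-\nu_1\}$ is exactly the root multiset of the minimal polynomial of $X$, the displayed $0$ using up one copy of the root $0$, and moreover so that in the list $-\nu_{r-1},\dots,-\nu_1$ no value's running multiplicity (including that initial $0$) ever exceeds the largest block size of the corresponding eigenvalue. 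Setting $q_k(x) := x(x+\nu_{r-1})\cdots(x+\nu_{r-k+1})$, so that $q_0=1$, $q_1=x$, $q_k(x)(x+\nu_{r-k})=q_{k+1}(x)$, and $q_r$ is the minimal polynomial of $X$, the elementary description of how $\dim\ker p(X)$ changes when one linear factor is appended to $p$ then gives $\rank q_{k+1}(X)<\rank q_k(X)$ for $0\leqslant k\leqslant r-1$, and $q_r(X)=0$.

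With these choices the quiver builds itself. I would put $V_{r-k} := \im(q_k(X))\subseteq\C^n$ for $0\leqslant k\leqslant r$; this is a strictly decreasing flag $\C^n=V_r\supsetneq V_{r-1}\supsetneq\dots\supsetneq V_1\supsetneq V_0=0$, so $n_{r-k}:=\dim V_{r-k}$ is a strictly ordered dimension vector with $n_1\geqslant1$ (the $k=r-1$ case of the rank inequality). Since $q_{r-i}(x)=q_{r-i-1}(x)(x+\nu_{i+1})$ one has $V_i=(X+\nu_{i+1})V_{i+1}$, so $X+\nu_{i+1}$ maps the $X$-invariant subspace $V_{i+1}$ onto $V_i$; I would then take $\alpha_i\colon V_i\hookrightarrow V_{i+1}$ to be the inclusion and $\beta_i:=(X+\nu_{i+1})|_{V_{i+1}}\colon V_{i+1}\to V_i$. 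Each $\alpha_i$ is then injective and each $\beta_i$ surjective; $\alpha_{r-1}\beta_{r-1}$ is the corestriction $X\colon\C^n\to\im X=V_{r-1}$ followed by the inclusion $V_{r-1}\hookrightarrow\C^n$, hence equals $X$; and for $2\leqslant i\leqslant r-1$,
\[
  \alpha_{i-1}\beta_{i-1}-\beta_i\alpha_i=(X+\nu_i)|_{V_i}-(X+\nu_{i+1})|_{V_i}=(\nu_i-\nu_{i+1})I=\lambda^\C_i I,
\]
while for $i=1$ the term $\alpha_0\beta_0$ vanishes and $\beta_1\alpha_1=(X+\nu_2)|_{V_1}=-\lambda^\C_1 I$ because $(X+\nu_1)$ annihilates $V_1=\im q_{r-1}(X)$, using $q_r(X)=q_{r-1}(X)(X+\nu_1)=0$. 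Thus~\eqref{eq:mmcomplex} holds, and after identifying each $V_i$ with $\C^{n_i}$ this is a quiver of exactly the stated type realising $Z=(X)_0$.

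The main obstacle, as usual for such statements, is finding the construction rather than checking it: the decisions to shift $Z$ so that $0$ becomes an eigenvalue, to take the flag to be the images of the polynomials $q_k(X)$ attached to the minimal polynomial, and to take the $\beta_i$ to be the restrictions of $X+\nu_{i+1}$; once these are made, the moment map identities and the injectivity/surjectivity are immediate. The only subsidiary point needing a little care is the ordering of the roots $-\nu_i$, which must make the ranks $\rank q_k(X)$ strictly decreasing so that $\mathbf n$ is strictly ordered with $n_1\geqslant1$; this is automatic once no eigenvalue is assigned more than $s_j$ copies, which holds since the multiplicities to be distributed are precisely $s_1,\dots,s_m$. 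Finally, being injective/surjective, the quiver is $H_\C$-stable by Lemma~\ref{lem:GITs}, so it represents a point of $Q=M(\mathbf n)\hkq H$, and $Z$ genuinely lies in the image of the induced map $Q\to\sln(n,\C)$.
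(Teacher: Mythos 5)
Your proof is correct, and it reaches the same kind of quiver as the paper but by a different organisation of the argument. The paper proceeds by induction on \( n \): after the same initial shift (replace \( Z \) by \( X = Z + cI \) so that \( 0 \) is an eigenvalue and \( (X)_0 = Z \)), it factors \( X = \alpha\beta \) with \( \beta \) the corestriction of \( X \) onto \( \im X \) and \( \alpha \) the inclusion, then applies the inductive hypothesis to \( \beta\alpha - \mu I \) for an eigenvalue \( \mu \) of \( \beta\alpha \), which produces the top scalar \( \lambda^\C_{r-1} = -\mu \) in~\eqref{eq:mmcomplex}; strict decrease of the dimensions is automatic because each successive operator is singular. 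Your construction is in effect the non-inductive unwinding of that recursion, with all the eigenvalue shifts fixed in advance as the roots (with multiplicity) of the minimal polynomial: the flag is \( V_{r-k} = \im q_k(X) \) for the partial products \( q_k \), the \( \alpha_i \) are inclusions, the \( \beta_i \) are \( (X+\nu_{i+1})|_{V_{i+1}} \), and the verification of~\eqref{eq:mmcomplex}, injectivity and surjectivity is a direct computation. The one extra ingredient you need, correctly identified and correctly discharged, is the standard fact that appending a linear factor \( (x-\tau_j) \) strictly increases \( \dim\ker q_k(X) \) as long as the running exponent of \( \tau_j \) does not exceed the largest Jordan block size \( s_j \) — automatic here since exactly \( s_j \) copies are distributed — which gives the strict ordering of the dimension vector (in the paper this is free from the singularity of each shifted operator). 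What your version buys is the additional information that \( r \) can be taken to be exactly the degree of the minimal polynomial of \( X \), dovetailing with the proposition on minimal polynomials just before; the paper's induction is shorter to write. Your closing observation that injectivity/surjectivity yields \( H_\C \)-stability via Lemma~\ref{lem:GITs}, so the quiver genuinely represents a point of \( Q \), is not required by the statement but is correct and consistent with how the proposition is used.
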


\begin{proof}
  Observe first of all that every element in \( \sln(n,\C) \) may be
  realised as the trace-free part \( (X)_0 \) of some \( X \) in \(
  \gl(n,\C) \) with a zero eigenvalue: if \( \lambda \) is an
  eigenvalue of \( Y \in \sln(n,\C) \) then we can take \( X = Y -
  \lambda I \).  It is enough therefore to show that each \( X \in
  \gl(n,\C) \) with a zero eigenvalue may be obtained from a quiver
  of the desired form.  We prove this by induction on \( n \).

  Let \( X \) be such an element of \( \gl(n,\C) \); then put \( m =
  \rank X \) and choose an isomorphism \( \phi \colon {\im X}
  \rightarrow \C^m \).  Then taking \( \beta = \phi \circ X \colon
  \C^n \rightarrow \C^m \) and \( \alpha = \phi^{-1}\colon \C^m
  \rightarrow \im X \hookrightarrow \C^n \) we obtain \( X = \alpha
  \beta \) with \( \alpha \) injective and \( \beta \) surjective.

  Let \( Y = \beta \alpha \in \gl(m,\C) \) and pick an eigenvalue \(
  \mu \) of \( Y \).  By the inductive hypothesis there is a quiver
  diagram in some \( M(0<n_1<\dots<n_{r-1}=m) \) with all \( \alpha_i \)
  injective, all \( \beta_i \) surjective, satisfying the complex
  moment map equations for \( \prod_{i=1}^{r-2} \SL(n_i,\C) \) and
  such that \( \alpha_{r-2}\beta_{r-2} = Y - \mu I \).  This equation
  gives \( \alpha_{r-2} \beta_{r-2} - \beta \alpha= - \mu I \), so
  putting \( \alpha_{r-1} = \alpha \), \( \beta_{r-1} = \beta \) we
  obtain a quiver of the required form.
\end{proof}

\section{Stratification for quiver diagrams}
\label{sec:strat-quiv-diagr}

Let \( \mathbf n = ( 0 = n_0 \leqslant n_1 \leqslant n_2 \leqslant
\dots \leqslant n_r = n ) \) and consider the space \( M = M(\mathbf
n) \), defined at equation~\eqref{eq:Mn}, of hyperk\"ahler quiver
diagrams
\begin{equation}
  \label{eq:qd}
  0 \stackrel[\beta_0]{\alpha_0}{\rightleftarrows}
  \C^{n_1}\stackrel[\beta_1]{\alpha_1}{\rightleftarrows}
  \C^{n_2}\stackrel[\beta_2]{\alpha_2}{\rightleftarrows} \dots
  \stackrel[\beta_{r-2}]{\alpha_{r-2}}{\rightleftarrows} \C^{n_{r-1}}
  \stackrel[\beta_{r-1}]{\alpha_{r-1}}{\rightleftarrows} \C^{n_r} = \C^n.
\end{equation}
Given a relation \( S \) on \( \{1,\dots,r-1\} \), we wish to describe
the structure of \( Q_S = M \hkq H_S \), where \( H_S =
\varphi^{-1}(T_S) \) is the subgroup of \( \tH = \prod_{i=1}^{r-1}
\Un(n_i) \) specified in Definition\nobreakspace \ref {def:Hs}.

\begin{definition}
  \label{def:hkstable}
  A quiver diagram \( (\alpha,\beta) \in M \) will be called
  \emph{hyperk\"ahler stable} if, after applying some element of the
  group \( \SUr \) defined in~\eqref{eq:SU2-rot}, each \( \alpha_i \)
  is injective and each \( \beta_i \) is surjective.
\end{definition}

The hyperk\"ahler stable quivers form an open subset \( M^\hks \) of
\( M \), and, by Lemma\nobreakspace \ref {lem:SL-stabiliser}, \( H_S \) acts freely on the
intersection of \( M^\hks \) with the solution space of the
hyperk\"ahler moment map equations (since \( H_S \) is contained in
its complexification with respect to any complex structure, and the
definition of hyperk\"ahler stable means that for each quiver in \(
M^\hks \) we may choose a complex structure to which
Lemma\nobreakspace \ref {lem:SL-stabiliser} applies). It follows that the hyperk\"ahler
quotient \( M^\hks \hkq H_S \) is an open subset \( Q^\hks_S \) of
\( Q_S = M \hkq H_S \).

\begin{lemma}
  \label{lem:QHKS}
  \( Q^\hks_S \) is a hyperk\"ahler manifold.
\end{lemma}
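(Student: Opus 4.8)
The plan is to obtain this as an instance of the standard hyperk\"ahler quotient construction: once one knows that a compact group acts freely and properly on a smooth level set of the hyperk\"ahler moment map, the quotient is automatically a hyperk\"ahler manifold, and almost all of the needed input has been assembled already in the paragraph preceding the lemma. First I would record that \( M^\hks \) is an open, \( \tH \)-invariant (hence \( H_S \)-invariant) subset of the flat hyperk\"ahler space \( M \), so it is itself hyperk\"ahler and carries a structure-preserving action of the compact group \( H_S \); the \( \tH \)-invariance holds because the conditions "\( \alpha_i \) injective" and "\( \beta_i \) surjective" are stable under pre- and post-composition with invertible maps, and because \( \tH \) commutes with \( \SUr \). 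The \( H_S \)-action is proper since \( H_S \) is compact (a closed subgroup of \( \prod_{i}\Un(n_i) \)).

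The key point — and, I expect, the only step that is more than bookkeeping — is to make explicit the freeness of the \emph{complexified} group \( (H_S)_\C \) that was invoked before the statement, since it is this freeness, rather than that of \( H_S \) alone, that the hyperk\"ahler quotient theory needs. The subtlety is that an arbitrary hyperk\"ahler-stable quiver need not have all \( \alpha_i \) injective and all \( \beta_i \) surjective for the given complex structure, so one must first rotate. Given \( (\alpha,\beta) \in M^\hks \), choose \( u \in \SUr \) so that \( u\cdot(\alpha,\beta) \) has every \( \alpha_i \) injective and every \( \beta_i \) surjective. Since \( \SUr \) commutes with the holomorphic action of \( \tH_\C \supseteq (H_S)_\C \), the stabiliser of \( (\alpha,\beta) \) in \( (H_S)_\C \) coincides with that of \( u\cdot(\alpha,\beta) \), which lies inside its \( \tH_\C \)-stabiliser; by Lemma~\ref{lem:SL-stabiliser} the latter is trivial. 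Hence \( (H_S)_\C \), and a fortiori \( H_S \), acts freely on all of \( M^\hks \), and one must then check (routinely) that stabilisers, smoothness of the level set, and the resulting metric are unaffected by the rotation.

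To conclude I would pass to the complex structure \( \mathsf I \) and write \( \mu_\C \) and \( \mu_\R \) for the complex and real parts of the hyperk\"ahler moment map for \( H_S \). The map \( \mu_\C \) is holomorphic and is simultaneously a moment map for \( (H_S)_\C \) with respect to \( \omega_{\mathsf J} + \ii \omega_{\mathsf K} \); at a point of \( \mu_\C^{-1}(0) \cap M^\hks \) the cokernel of its differential is the dual of the \( (H_S)_\C \)-stabiliser Lie algebra, hence zero by the previous paragraph, so \( \mu_\C^{-1}(0) \cap M^\hks \) is a smooth submanifold \( Y \) of the expected codimension, K\"ahler for \( \omega_{\mathsf I} \) as in the standard construction. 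The map \( \mu_\R \) then restricts to a moment map for the free \( H_S \)-action on \( Y \), with surjective differential at points of \( Y \cap \mu_\R^{-1}(0) \) (trivial stabiliser), so \( Q^\hks_S \) is the K\"ahler reduction of \( Y \) by \( H_S \) at the regular value \( 0 \). By the hyperk\"ahler quotient construction this reduction inherits three compatible reduced K\"ahler structures, i.e.\ is a hyperk\"ahler manifold, of quaternionic dimension \( \dim_\HH M - \dim H_S \); and since, by hyperk\"ahler stability, the same argument works with \( \mathsf I \) replaced by any complex structure from the \( \SUr \)-sphere, there is no asymmetry.
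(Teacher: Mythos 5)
The gap is in your second paragraph, and it is exactly the step you flag as the key one. \( \SUr \) does \emph{not} commute with the holomorphic action of \( \tH_\C \): it is right multiplication by unit quaternions, which commutes with the quaternion-linear action of the compact group \( \tH \) but not with its complexification. Indeed \( \jj \) sends \( (\alpha_i,\beta_i) \) to \( (-\beta_i^*,\alpha_i^*) \), and conjugating the action of \( g \in \tH_\C \) by \( \jj \) yields the action of \( (g^*)^{-1} \), which agrees with that of \( g \) only for unitary \( g \). Consequently the \( (H_S)_\C \)-stabiliser is not preserved by the rotation, and your conclusion that \( (H_S)_\C \) acts freely on all of \( M^\hks \) is false. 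For a counterexample take \( r=2 \), dimension vector \( (2,3) \), with \( \alpha(e_1)=f_1 \), \( \alpha(e_2)=0 \), \( \beta(f_2)=e_2 \), \( \beta(f_1)=\beta(f_3)=0 \). A generic element of \( \SUr \) replaces \( \alpha \) by a combination of \( \alpha \) and \( \beta^* \), making it injective, and \( \beta \) by a combination of \( \beta \) and \( \alpha^* \), making it surjective, so this quiver lies in \( M^\hks \); moreover \( \beta\alpha=0 \), so it lies in \( \mu_\C^{-1}(0) \). Yet the one-parameter unipotent subgroup of \( H_\C=\SL(2,\C) \) given by \( g^{-1}e_1=e_1+ce_2 \), \( g^{-1}e_2=e_2 \) stabilises \( (\alpha,\beta) \). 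At such a point \( d\mu_\C \) is not surjective, so your smoothness claim for \( Y=\mu_\C^{-1}(0)\cap M^\hks \) fails precisely where you invoke it.

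What is true, and what the paper's proof uses, is freeness of the \emph{compact} group: since \( \SUr \) does commute with \( \tH \), the \( H_S \)-stabiliser of a hyperk\"ahler stable quiver equals that of its rotation, and the latter sits inside the \( \tH_\C \)-stabiliser of the rotated quiver, which is trivial by Lemma~\ref{lem:SL-stabiliser}. Freeness of \( H_S \) at points of the joint zero level \( \mu_\R^{-1}(0)\cap\mu_\C^{-1}(0)\cap M^\hks \) already gives surjectivity of the differential of the full hyperk\"ahler moment map there (on the zero level the orbit directions are isotropic for all three K\"ahler forms, so the three components of the differential have no common relation), hence the level set is smooth and the standard hyperk\"ahler quotient theorem applies; this is the paper's argument that \( Q_S^\hks \), being open in \( M\hkq H_S \), is non-singular and hyperk\"ahler. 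If you want to keep your two-step route through \( \mu_\C \), you must restrict to a neighbourhood of the real zero level and argue via Kempf--Ness (as the paper does in the remark following the lemma): at a zero of \( \mu_\R \) the \( (H_S)_\C \)-stabiliser is the complexification of the \( H_S \)-stabiliser, hence trivial there; triviality of complex stabilisers cannot be asserted on all of \( M^\hks \), nor even on all of \( \mu_\C^{-1}(0)\cap M^\hks \).
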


\begin{proof}
  Since \( Q_S^\hks \) is an open subset of a hyperk\"ahler quotient
  by \( H_S \) it is enough to check that it is non-singular. This
  follows from the freeness of the \( H_S \) action on \( M^\hks \).
\end{proof}

\begin{remark}
  \label{rem:bigstar}
  The non-singularity result in Lemma\nobreakspace \ref {lem:QHKS} also follows from
  Lemmas\nobreakspace \ref {lem:SL-stabiliser} and\nobreakspace  \ref {lem:GITs} since (by the work of Kempf and
  Ness \cite{KN,Thomas}) for any choice of complex structures we can
  identify \( Q_S^\hks \) locally with the GIT quotient by \(
  (H_S)_\C \) of the affine subvariety of the affine space \( M \)
  defined by the complex moment map equations. Moreover the \(
  (H_S)_\C \) action is free on the neighbourhood where the
  identification takes place, and the subvariety is non-singular at
  any point whose stabiliser in \( (H_S)_\C \) is trivial (or even
  finite).
\end{remark}

\begin{lemma}
  \label{lem:hks}
  Suppose that, after applying some element of the group \( \SUr \) a
  quiver has the form \eqref{eq:qd} with each \( \alpha_i \)
  injective.  Then it is hyperk\"ahler stable.
\end{lemma}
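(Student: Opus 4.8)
The plan is to reduce to the situation where the complex moment map equations hold for $\tH_\C$ (not just $H_\C$) and where $X$ is nilpotent, by first applying an element of $\SUr$ as in Proposition~\ref{prop:gl-decomp}, and then decomposing the quiver into eigenspace subquivers that can be contracted. So suppose a quiver $(\alpha,\beta)$ becomes, after applying some $\sigma\in\SUr$, a quiver of the form~\eqref{eq:qd} with every $\alpha_i$ injective. We must produce a (possibly different) element of $\SUr$ after which every $\alpha_i$ is injective \emph{and} every $\beta_i$ is surjective.

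First I would observe that applying $\sigma$ does not change whether the quiver lies in the zero level set of the hyperk\"ahler moment map for $H$, since $\SUr$ commutes with the $H$-action and rotates the moment map components isometrically; so we may as well assume from the start that $(\alpha,\beta)\in\mu_\R^{-1}(0)\cap\mu_\C^{-1}(0)$ already has each $\alpha_i$ injective. The key point is then to examine the real moment map equation~\eqref{eq:mmreal}. The idea is that injectivity of all the $\alpha_i$ should force, via a dimension count together with~\eqref{eq:mmreal}, that all the $\beta_i$ are surjective too — at which point the quiver is already hyperk\"ahler stable, with the identity element of $\SUr$. Concretely: if some $\beta_i$ fails to be surjective, then $\alpha_{i-1}\beta_{i-1} - \beta_i\alpha_i$ has a kernel-related deficiency that contradicts the rank balance forced by~\eqref{eq:mmreal} together with the injectivity of $\alpha_{i-1}$ and $\alpha_i$. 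Here I would use the decomposition into generalised eigenspaces of the compositions $\alpha_i\beta_i$ from~\eqref{eq:geneigen} and the fact, noted after Lemma~\ref{lem:evals}, that $\alpha_i$ and $\beta_i$ give isomorphisms between the nonzero-eigenvalue pieces, so that the only obstruction to surjectivity of $\beta_i$ lives in the generalised $0$-eigenspace piece.

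Alternatively, and perhaps more cleanly, I would argue by contradiction at the level of destabilising one-parameter subgroups. If not all $\beta_i$ are surjective, pick the subquiver summand (after eigenspace decomposition and contraction, as in Remark~\ref{rem:GLdecomp}) on which this fails; contraction reduces it to a quiver of the form~\eqref{eq:ab} tensored with an injective/surjective part, but if $\alpha$ is everywhere injective the scalar-form summands of type~\eqref{eq:ab} with a genuine kernel cannot occur — a quiver~\eqref{eq:ab} has $\alpha_i$ a scalar multiplication $\C^m\to\C^m$, which is injective exactly when the scalar is nonzero, and then~\eqref{eq:ab-lambda} is consistent, so such summands \emph{can} occur but only with $a_j\neq 0$, i.e. they are themselves injective/surjective. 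So after the eigenspace decomposition every summand is either already injective/surjective or is a trivial-maps summand $0\rightleftarrows\C^p\rightleftarrows 0$ with $\dim=n_i-n_{i-1}$ worth of zero maps — but the latter contradicts injectivity of the corresponding $\alpha_i$ unless the summand is zero.

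The main obstacle I anticipate is the one flagged at the end of Remark~\ref{rem:GLdecomp}: a quiver lying in a closed $H_\C$-orbit need not lie in a closed $\tH_\C$-orbit, so one is not immediately entitled to apply Proposition~\ref{prop:KobS} to the contracted subquivers. The way around it, exactly as in the proof of Proposition~\ref{prop:gl-decomp}, is to first apply a \emph{generic} element of $\SUr$ so that the rotated moment-map value $\lambda$ satisfies the regularity condition~\eqref{eq:lambda-regular}; this makes the quiver satisfy the full hyperk\"ahler moment map equations at level $0$ for the subgroup $H_{\leqslant_\lambda}$, whose orbit is then closed, and one checks this generic rotation can be chosen to preserve injectivity of all the $\alpha_i$ (injectivity is an open condition, stable under small perturbation of the complex structure, and the rotation only mixes $\alpha_i$ with $-\beta_i^*$). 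With the closed-orbit property in hand for each contracted subquiver under $\prod_i\GL(V_{i,j})$, Remark~\ref{rem:GLdecomp} applies, and the argument above goes through. So the statement follows, with the required element of $\SUr$ being a suitably generic rotation.
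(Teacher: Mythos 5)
Your proposal proves (at best) a different, weaker statement. Lemma\nobreakspace\ref{lem:hks} carries no moment-map hypothesis: it is a statement about arbitrary quivers in the flat space \( M \), exactly as Definition\nobreakspace\ref{def:hkstable} is, and in the paper it gets applied to quivers known to satisfy only the \emph{complex} moment map equations (e.g.\ together with Lemma\nobreakspace\ref{lem:GITs} in \S\ref{sec:structure-strata}), not the real ones. Both of your arguments begin by placing the quiver in \( \mu_\R^{-1}(0)\cap\mu_\C^{-1}(0) \) and then lean on \eqref{eq:mmreal}, the generalised-eigenspace decomposition, contraction and Proposition\nobreakspace\ref{prop:gl-decomp}, none of which are available under the lemma's hypotheses (and Proposition\nobreakspace\ref{prop:gl-decomp} additionally needs a strictly ordered dimension vector). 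Worse, the key claim of your first route --- that injectivity of all the \( \alpha_i \) together with \eqref{eq:mmreal} forces all \( \beta_i \) to be surjective, so the identity element of \( \SUr \) suffices --- is false: take all \( \beta_i=0 \) (so the complex equations hold trivially) and injective \( \alpha_i \) scaled so that the trace-free parts of \( \alpha_{i-1}\alpha_{i-1}^*-\alpha_i^*\alpha_i \) vanish, which is possible exactly as in \S\ref{sec:symplectic-quivers}; such a quiver satisfies all your assumptions, has no surjective \( \beta_i \), and is hyperk\"ahler stable only via a nontrivial rotation. Your second route is also circular at the point you yourself flag: after applying the generic element of \( \SUr \) needed for Proposition\nobreakspace\ref{prop:gl-decomp}, you no longer know the rotated \( \alpha_i \) are injective, yet the direct-sum analysis is carried out for the rotated quiver.

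The remark you make in passing --- that injectivity is an open, indeed generic, condition along the \( \SUr \)-orbit --- is in fact the whole proof, and it needs no equations at all. For a fixed quiver, the \( \alpha \)-part of the rotated quiver depends algebraically on the point of the two-sphere of complex structures (it is a linear combination of \( \alpha_i \) and \( \beta_i^* \), cf.\ \eqref{eq:j}), so failure of injectivity of some \( \alpha_i \) is cut out by the vanishing of all maximal minors and happens only on a proper algebraic, hence finite, subset of the sphere; since it fails nowhere by hypothesis at one point, all but finitely many rotations have every \( \alpha_i \) injective. Composing with right multiplication by \( \jj \), which sends \( (\alpha_i,\beta_i) \) to \( (-\beta_i^*,\alpha_i^*) \), converts this statement into: all but finitely many rotations have every \( \beta_i \) surjective. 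A single generic element of \( \SUr \) therefore achieves both conditions simultaneously, which is hyperk\"ahler stability. This is the paper's argument, and it works for an arbitrary quiver of the form \eqref{eq:qd}.
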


\begin{proof}
  If a quiver \( (\alpha,\beta) \) has all \( \alpha_i \) injective,
  then the same is true for all but finitely many of the quivers in
  its orbit under the action of~\( \SUr \).  Now, the \( \SUr
  \)-action includes the right-multiplication by~\( \jj \) given
  in~\eqref{eq:j}.  We conclude that all but finitely elements of the
  \( \SUr \)-orbit have \( \beta^* \) injective, which is the same as
  saying that \( \beta \) is surjective.  Thus the quiver is
  hyperk\"ahler stable.
\end{proof}

For the stratification results, we will describe an augmentation
process for quiver diagrams.  Note that \( M(1,1) = \HH \) with \(
a+\jj b \) corresponding to the quiver \( \C
\stackrel[b]{a}{\rightleftarrows} \C \), whose maps are multiplication
by \( a \) and~\( b \), respectively.  Given any integer \( d > 0 \)
and indices \( 1 \leqslant i \leqslant j < r \), we have the element
\( e_{ij} = \sum_{k=i}^j e_k \in \Z^r \subset \R^r \), as in
Definition\nobreakspace \ref {def:Hs}, and may use the multiple \( d\,e_{ij} \) as a dimension
vector.  For \( p = j-i-1 \), we define a hyperk\"ahler embedding
\begin{gather*}
  \phi_{i,j;d}\colon \HH^p \longrightarrow M(d\,e_{ij})\\
  \phi_{i,j;d}(a+\jj b) = \bigl( 0 \stackrel[0]{0}{\rightleftarrows}
  \dots \stackrel[0]{0}{\rightleftarrows} 0
  \stackrel[0]{0}{\rightleftarrows} \C^d
  \stackrel[b_1]{a_1}{\rightleftarrows} \C^d
  \stackrel[b_2]{a_2}{\rightleftarrows} \dots
  \stackrel[b_p]{a_p}{\rightleftarrows} \C^d
  \stackrel[0]{0}{\rightleftarrows} 0 \dots
  \stackrel[0]{0}{\rightleftarrows} 0 \bigr)
\end{gather*}
with each map in the quiver being multiplication by the indicated
scalar (the sets on both sides are empty if \( p = -1 \)).  This map
is an isomorphism for \( d = 1 \), and in all cases is equivariant
with respect to the \( \SUr \)-action.  Our construction will involve
stacking the above embeddings on top of a quiver in \( M(\mathbf n)
\).

\begin{definition}
  A relation \( S \) on \( \{1,\dots,r-1\} \) is said to be
  \emph{injective} if \( S \) is the graph of an injective function,
  also denoted by \( S \), from \( \dom S = \{ i \mid \exists j: (i,j)
  \in S \} \) to \( \{1,\dots,r-1\} \).
\end{definition}

We note that any subrelation of an injective relation is again
injective.

\begin{definition}
  \label{def:augmentation}
  Let \( S \) be an injective subrelation of \( \leqslant \) on \(
  \{1,\dots,r-1\} \).  Suppose \( \mathbf m,\mathbf n \in
  \Z_{\geqslant0}^r \) are two ordered dimension vectors and there
  is a function \( \delta \colon {\dom S} \to \Z_{>0} \) such that
  \begin{equation*}
    \mathbf n = \mathbf m + \mathbf d,
  \end{equation*}
  for \( \mathbf d = \sum_{(i,j) \in S} \delta(i) e_{ij} \).  Put \( R
  = {<_S} = \{(i,j)\in S: i<j \} \) and \( \ell = \sum_{(i,j) \in R}
  j-i-1 \).

  The \emph{augmentation map} is the \( \SUr \)-equivariant
  hyperk\"ahler embedding
  \begin{equation*}
    \phi_{S,\delta} \colon M(\mathbf m) \oplus \HH^\ell \to M(\mathbf n)
  \end{equation*}
  obtained by writing \( \HH^\ell = \bigoplus_{(i,j) \in R}
  M(e_{ij}) \) and mapping
  \begin{equation*}
    ((\alpha,\beta),{(a^{(i)}+\jj b^{(i)})_{i \in \dom R}})
    \ \text{to}\ (\alpha,\beta) \oplus 0 \oplus \bigoplus 
    \phi_{i,S(i);\delta(i)}(a^{(i)}+\jj b^{(i)}),
  \end{equation*}
  where \( 0 \in M(\sum_{(i,i) \in S} d(i)e_i ) \).
\end{definition}

Recall now that the hyperk\"ahler
modification~\cite{Dancer-Swann:modifying} of a hyperk\"ahler
manifold~\( Y \) with a tri-Hamiltonian circle action is \(
Y_\textup{mod} = (Y \times \HH) \hkq S^1, \) where \( S^1 \) acts
diagonally on \( Y \times \HH \) with respect to the given action
on~\( Y \) and the inverse of the standard action on~\( \HH \).  This
construction adjusts to other choices of weight for the circle action
on~\( \HH \). More generally if a torus \( T^\ell \) acts on \( Y
\) with a hyperk\"ahler moment map then we have a hyperk\"ahler
modification
\begin{equation*}
  \hat Y = (Y \times \HH^\ell) \hkq T^\ell.
\end{equation*}
This is a hyperk\"ahler space of the same dimension as \( Y \) that
contains a copy of the hyperk\"ahler quotient \( Y\hkq T^\ell \).

In the situation of Definition\nobreakspace \ref {def:augmentation} above, suppose \( S
\subset S_1 \), where \( S_1 \) is also an injective subrelation of \(
\leqslant \).  Associated to \( S_1 \) we have the subgroup \( H_{S_1} =
H_{S_1}(\mathbf m) \) of \( \tH(\mathbf m) = \prod_{j=1}^{r-1}
\Un(m_j) \) from Definition\nobreakspace \ref {def:Hs}.  Put \( Q_1 = M(\mathbf m) \hkq
H_{S_1} \).  Then there is an action of \( \tH(\mathbf m) \) on \(
M(\mathbf m) \) and an induced action of \( \tT = \tH(\mathbf
m)/H(\mathbf m) \) on \( Q_1 \) since \( H_{S_1} \) contains \(
H(\mathbf m) \) as a normal subgroup.  

Embed \( T^\ell = \R^\ell/\Z^\ell \) into \( \tT \) via
\( e_{(i,j);k} \mapsto e_{i+k,j} \) for \( (i,j) \in S \) with \( j >
i \) and \( k=1,\dots,j-i \).  We may now use the induced action
of \( T^\ell \) on \( Q_1 \) to construct a hyperk\"ahler
modification
\begin{equation*}
  \hat Q_1 = (Q_1 \times \HH^\ell) \hkq T^\ell
\end{equation*}
and consider the open subset
\begin{equation}
  \label{eq:Q1hks}
  \hat Q_1^\hks = \bigl( Q_1^\hks \times (\HH\setminus\{0\})^\ell
  \bigr) \hkq T^\ell. 
\end{equation}

\begin{proposition}
  \label{prop:mod-embed}
  Suppose \( S_1 \) is an injective subrelation of~\( \leqslant \)
  on~\( \{1,\dots,\allowbreak{r-1\}} \) with \( S_1 = S \coprod S_2
  \), a disjoint union.  Let \( \mathbf m \), \( \mathbf n \), \( \delta
  \), \( R = {<_S} \) and~\( \ell \) be as in Definition\nobreakspace \ref {def:augmentation},
  with the additional assumption that \( m_r = n = n_r \), and write \(
  \phi_{S,\delta} \) for the resulting augmentation map.
  
  Put \( Q_1 = M(\mathbf m) \hkq H_{S_1} \), \( Q_2 = M(\mathbf n)
  \hkq H_{S_2} \) and let \( \hat Q_1^\hks \) be the open subset of
  the hyperk\"ahler modification ~\( \hat Q_1 \) as in
  equation~\eqref{eq:Q1hks}.  Then the augmentation map \( \phi_{S,\delta}
  \) induces an augmentation map
  \begin{equation*}
    \Phi_{S,\delta} \colon \hat Q_1 \to Q_2
  \end{equation*}
  of hyperk\"ahler quotients which is \( \SUr \)-equivariant and an
  embedding of the smooth manifold \( \hat Q_1^\hks \).
\end{proposition}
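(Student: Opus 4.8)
The plan is to realise $\hat Q_1$ as a single hyperk\"ahler quotient of $M(\mathbf m)\times\HH^\ell$ by reduction in stages, push the linear augmentation embedding $\phi_{S,\delta}$ down to quotients, and recognise the image inside $Q_2$. Using the embedding $T^\ell\hookrightarrow\tT$ of Definition~\ref{def:augmentation} and a section of $\varphi\colon\tH(\mathbf m)\to\tT$, lift the $T^\ell$-action on $Q_1$ to an action on $M(\mathbf m)$ commuting with $H_{S_1}(\mathbf m)$, and set $\hat H = \varphi^{-1}(T_{S_1}+\bar T^\ell)\subseteq\tH(\mathbf m)$, where $\bar T^\ell\subseteq\tT$ is the image of $T^\ell$; by injectivity of $S_1$ the vectors $e_{ij}$ $((i,j)\in S_1)$ and $e_{i+k,j}$ $((i,j)\in S,\,1\leqslant k)$ are independent, so $H_{S_1}(\mathbf m)\trianglelefteq\hat H$ with $\hat H/H_{S_1}(\mathbf m)\cong T^\ell$. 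Let $\hat H$ act on $M(\mathbf m)\times\HH^\ell$ through its action on $M(\mathbf m)$ and, via the quotient $T^\ell$, by the appropriately-signed standard action on $\HH^\ell$, the $\HH$-factor labelled by $(i,j)\in R$ and $k$ carrying the character $e_{(i,j);k}\mapsto e_{i+k,j}$. Since $H_{S_1}(\mathbf m)$ acts trivially on $\HH^\ell$, reduction in stages (a routine consequence of the Kempf--Ness identifications used repeatedly above) gives $(M(\mathbf m)\times\HH^\ell)\hkq\hat H = (Q_1\times\HH^\ell)\hkq T^\ell = \hat Q_1$, and likewise $\hat Q_1^\hks = \bigl(M(\mathbf m)^\hks\times(\HH\setminus\{0\})^\ell\bigr)\hkq\hat H$.

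Next I would promote $\phi_{S,\delta}$ to an equivariant map. Let $\hat H$ act on $M(\mathbf n)$ block-diagonally: on $\C^{n_i}=\C^{m_i}\oplus\C^{d_i}$ it acts on $\C^{m_i}$ through its inclusion $\hat H\subseteq\tH(\mathbf m)$, and on each summand $\C^{\delta(i_0)}$ of $\C^{d_i}$ (for $(i_0,j_0)\in S$ with $i_0\leqslant i\leqslant j_0$) by the scalar prescribed by the relevant $T^\ell$-character. A direct check with the determinant maps $\varphi_{\mathbf n}$, using $\mathbf n=\mathbf m+\sum_{(i,j)\in S}\delta(i)e_{ij}$ and $m_r=n=n_r$, shows this image lies in $H_{S_2}(\mathbf n)$, and by construction $\phi_{S,\delta}$ intertwines the $\hat H$-actions. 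The crucial point is that $\phi_{S,\delta}$ carries the zero set of the $\hat H$-moment map into that of the $H_{S_2}(\mathbf n)$-moment map: the $T^\ell$-moment-map equations defining $\hat Q_1$ force the scalar data $a^{(i)}_k,b^{(i)}_k$ of the embedded subquivers $\phi_{i,S(i);\delta(i)}$ to be exactly the partial sums of the scalars $\lambda_k=(\lambda^\R_k,\lambda^\C_k)$ occurring in~\itref{eq:mmcomplex}--\itref{eq:mmreal} for the $M(\mathbf m)$-factor, precisely as in Example~\ref{ex:contract}; hence the augmented quiver satisfies a \emph{single} system~\itref{eq:mmcomplex}--\itref{eq:mmreal} (the trace-free parts vanish because the principal block and every scalar block carry the same $\lambda_i$), and as $S_2\subseteq S_1$ the constraints $\sum_{k=i}^j\lambda_k=0$ for $(i,j)\in S_2$ hold because they already hold for $S_1$. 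So $\phi_{S,\delta}$ descends to $\Phi_{S,\delta}\colon\hat Q_1\to Q_2$, whose $\SUr$-equivariance is inherited from that of $\phi_{S,\delta}$ (Definition~\ref{def:augmentation}) and of the modification.

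It remains to show $\Phi_{S,\delta}$ embeds $\hat Q_1^\hks$. On $\hat Q_1^\hks$ every augmented quiver has all $\alpha_i$ injective and all $\beta_i$ surjective on the principal block and all scalar blocks with non-zero maps, so by Lemma~\ref{lem:hks} it is hyperk\"ahler stable in $M(\mathbf n)$ and $\Phi_{S,\delta}(\hat Q_1^\hks)\subseteq Q_2^\hks$, a manifold by Lemma~\ref{lem:QHKS}. For injectivity, fix a complex structure to which hyperk\"ahler stability applies and use the GIT picture: two points of $\hat Q_1^\hks$ with the same image give augmented quivers lying in one $(H_{S_2}(\mathbf n))_\C$-orbit; decomposing by generalised eigenspaces of the $\alpha_i\beta_i$ as in Lemma~\ref{lem:evals} and Remark~\ref{rem:GLdecomp} recovers the block decomposition canonically, so the conjugating element preserves it, hence lies in $\hat H_\C$, and since $\hat H_\C$ acts freely on $M(\mathbf m)^\hks\times(\HH\setminus\{0\})^\ell$ (Lemma~\ref{lem:SL-stabiliser}) the two points coincide. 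The same eigenspace decomposition supplies a continuous local inverse; a dimension count ($\dim\hat Q_1=\dim Q_1=\dim M(\mathbf m)-4\dim H_{S_1}(\mathbf m)$, matched against the dimension of the relevant stratum of $Q_2$) together with injectivity of $d\phi_{S,\delta}$ shows $\Phi_{S,\delta}|_{\hat Q_1^\hks}$ is an immersion, hence an embedding.

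The step I expect to be the main obstacle is the moment-map bookkeeping of the second paragraph combined with the intrinsic recognition of the block decomposition in the third: one must check that the $\HH^\ell$-factors of the modification supply exactly the right partial sums $\sum\lambda^\C_k$ so that the augmented quiver lands in the correct level set, and then recover the decomposition of an augmented quiver from its $(H_{S_2}(\mathbf n))_\C$-orbit. It is here that the freedom to rotate complex structures by $\SUr$ is genuinely needed, since an $H_\C$-orbit need not be a $\tH_\C$-orbit --- the caveat at the end of Remark~\ref{rem:GLdecomp}.
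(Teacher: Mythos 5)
Your overall strategy is the paper's: represent a point of \( \hat Q_1 \) by a quiver plus quaternionic scalars satisfying the \( H_{S_1} \)- and \( T^\ell \)-moment-map equations, check that the augmentation then satisfies the single system \eqref{eq:mmcomplex}--\eqref{eq:mmreal} with the same \( \lambda \)'s (so the \( H_{S_2} \)-equations hold because \( S_2\subseteq S_1 \)), and recover the original data from the image to get injectivity on \( \hat Q_1^\hks \). Your second paragraph is essentially identical to the paper's computation, and the \( \SUr \)-equivariance point is as in the paper. However, two steps as written do not hold up. First, the independence claim underlying your group \( \hat H=\varphi^{-1}(T_{S_1}+\bar T^\ell) \) is false: take \( S_1=S=\{(1,2),(2,3)\} \) (an admissible injective subrelation, e.g.\ \( \mathbf n=(1,2,3,4) \), \( \delta\equiv1 \), \( \mathbf m=(0,0,2,4) \)); the tails give \( e_{22},e_{33} \) and \( e_{23}=e_{22}+e_{33}\in\lie t_{S_1} \), so the sum is not direct, \( \hat H/H_{S_1}\not\cong T^\ell \), and the prescription \enquote{act on \( \HH^\ell \) via the quotient \( T^\ell \)} is undefined. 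This is repairable -- work with the direct product \( H_{S_1}\times T^\ell \) acting (possibly non-effectively through \( \tH(\mathbf m) \)) on \( M(\mathbf m)\times\HH^\ell \), which is what the paper does implicitly -- but your later appeal to freeness of \( \hat H_\C \) must then be restated for that product group.

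The more serious gap is in your injectivity argument. You claim the generalised eigenspace decomposition of the \( \alpha_i'\beta_i' \) \enquote{recovers the block decomposition canonically}, so that the conjugating element preserves it. That fails whenever a scalar block's eigenvalue \( a^{(i)}_{k-i}b^{(i)}_{k-i}=\lambda^\C_k+\dots+\lambda^\C_j \) coincides with an eigenvalue of \( \alpha_{k-1}\beta_{k-1} \) on the principal block at the same level -- nothing in the hypotheses excludes this, and then the generalised eigenspace lumps the two blocks together, so the conjugating element need not respect the splitting and the conclusion that it lies in \( \hat H_\C \) does not follow. The paper's recovery avoids eigenvalues altogether: because the augmenting blocks are capped by zero maps strictly below level \( r \) (this is where \( m_r=n=n_r \) is used), the principal block at level \( i \) is the canonically defined subspace \( \im(\beta_i'\beta_{i+1}'\dots\beta_{r-1}') \), of dimension \( m_i \), and restricting the maps of \( (\alpha',\beta') \) to these images recovers the point of \( \hat Q_1^\hks \) up to exactly the \( H_{S_1}\times T^\ell \) ambiguity (cf.\ Remarks\nobreakspace\ref{rem:residual} and\nobreakspace\ref{rem:recover}). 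Replacing your eigenspace step by this rank-and-image argument closes the gap; your dimension-count remark for the immersion property is then harmless but inessential, since the explicit restriction map provides the local inverse.
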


\begin{proof}
  A point of \( \hat Q_1 \) is represented by \( \mathbf q =
  ((\alpha,\beta),(a^{(i)}+\jj b^{(i)})_{i\in\dom R}) \) satisfying
  \begin{compactenum}
  \item the \( H_{S_1} \)-hyperk\"ahler moment map equations
    \begin{equation}
      \label{eq:HS1mu}
      \begin{gathered}
        \alpha_{i-1} \beta_{i-1} - \beta_i
        \alpha_i = \lambda^\C_i I, \\
        \alpha_{i-1}^{} \alpha_{i-1}^* - \beta_{i-1}^* \beta_{i-1}^{}
        + \beta_i^{} \beta_i^* - \alpha_i^* \alpha_i^{} = \lambda^\R_i
        I,
      \end{gathered}
    \end{equation}
    for \( i = 1,\dots,r-1 \), and for some \( (\lambda^\R,\lambda^\C)
    \) satisfying \( \sum_{k=i}^j \lambda^\C_k = 0 = \sum_{k=i}^j
    \lambda^\R_k \), for all \( (i,j) \in S_1 \),
  \item the \( T^{\strut\ell} \)-hyperk\"ahler moment map
    equations
    \begin{equation*}
      \begin{gathered}
        a_{k-i}^{(i)}b_{k-i}^{(i)} = \lambda^\C_k + \lambda^\C_{k+1}
        + \dots + \lambda^\C_j,\\
        \abs{a_{k-i}^{(i)}}^2 - \abs{b_{k-i}^{(i)}}^2 = \lambda^\R_k +
        \lambda^\R_{k+1} + \dots + \lambda^\R_j.
      \end{gathered}
    \end{equation*}
    for \( (i,j) \in R \) and \( i < k \leqslant j \).
  \end{compactenum}
  The quiver \( (\alpha',\beta') = \phi_{S,\delta}(\mathbf q) \) satisfies
  the analogue of the equations~\eqref{eq:HS1mu} with the same \(
  (\lambda^\R,\lambda^C) \).  In particular, since \( S_2 \subset S_1
  \), \( (\alpha',\beta') \) satisfies the \( H_{S_2} \)-hyperk\"ahler
  moment map equations.  Furthermore \( H_{S_1} \times T^\ell \)
  acts on \( (\alpha',\beta') \) as a subgroup of \( H_{S_2}
  \).  We thus have a well-defined, and \( \SUr
  \)-equivariant, map~\( \Phi_{S,\delta} \), as claimed.

  For \( \hat Q_1^\hks \), we can move our quiver via the \( \SUr \)
  action so each \( \alpha_i \) is injective, each \( \beta_i \) is
  surjective and all \( a^{(i)}_{k-i} \) and \( b^{(i)}_{k-i} \) are
  non-zero.  In this case, we can recover a point in \( \hat Q_1^\hks
  \) from the \( H_{S_2} \)-orbit of the quiver \( (\alpha',\beta') =
  \phi_{S,\delta}(\mathbf q) \in M(\mathbf n) \) by restricting maps
  in \( (\alpha',\beta') \) to the images of compositions of other
  maps in \( (\alpha',\beta') \), since the ambiguity in this process
  is exactly the action of \( H_{S_1} \times T^\ell \), cf.\
  Remark\nobreakspace \ref {rem:residual}.  Thus \( \Phi_{S,\delta} \) is injective on \(
  \hat Q_1^\hks \).
\end{proof}

\begin{remark}
  \label{rem:dot}
  We note that \( \hat Q_1^\hks \) determines \( Q_1^\hks \) as a
  hyperk\"ahler manifold.  Both spaces are the base of torus
  fibrations from a common total space contained in \( M(\mathbf m)
  \times \HH^\ell \).  The projection to \( \hat Q_1^\hks \) is a
  Riemannian submersion, whereas that to \( Q_1^\hks \) is just
  projection on to the \( M(\mathbf m) \) components.  The pull-backs
  of the two metrics differ by scalings along the quaternionic
  directions of the torus actions.  A description of this in terms of
  the twist construction may be found in~\cite{Swann:twist-mod}.
\end{remark}

\begin{remark}
  \label{rem:recover}
  The image \( \Phi_{S,\delta}(\hat Q_1) \subset Q_2 \) determines the
  original data \( \mathbf m \), \( S \) and~\( d \) as follows.
 
  Choose a point in the image represented by a quiver \(
  (\alpha',\beta') \) of largest possible rank.  Then \( m_i =
  \rank{\beta_i'\beta_{i+1}'\dots\beta_{r-1}'} \) and the
  corresponding quiver \( (\alpha,\beta) \) obtained by restriction
  has \( \alpha_i \) injective and \( \beta_i \) surjective.

  As \( S \) is injective the function \( \delta \) is determined by
  its values on the range of \( S \).  We determine \( S \) and \(
  \delta \) recursively.  Suppose we have found \( S' \subset S \) and
  the corresponding values of~\( \delta \).  Put \( \mathbf m' =
  \mathbf m + \sum_{(i,j)\in S'} \delta(i)e_{ij} \).  The largest
  element of the range of~\( S\setminus S' \) is the largest \( j \)
  for which \( m'_j < n_j \), the corresponding value of \( \delta \)
  is \(  n_j - m'_j \).  Now \( j = S(i) \), where \( i-1 < j \) is
  the largest index such that \(
  \rank{\beta_{i-1}'\beta_i'\dots\beta_{j-1}'} \) is strictly less
  than \( (n_j-m'_j) + \rank{\beta_{i-1}\beta_i\dots\beta_{j-1}} \).
\end{remark}

\begin{theorem}
  \label{thm:strat}
  Let \( \mathbf n = (n_1<\dots<n_r=n) \) be strictly ordered and let
  \( M(\mathbf n) \) be the space of hyperk\"ahler quiver
  diagrams~\eqref{eq:Mn}.  

  Suppose \( S \) is an injective subrelation of \( \leqslant \) on \(
  \{1,\dots,r-1\} \).  Let \( \delta\colon{\dom S} \to \Z_{>0} \) be a
  function such that \( \mathbf m = \mathbf n - \mathbf d \), \(
  \mathbf d = \sum_{(i,j)\in S}\delta(i)e_{ij} \), is an ordered dimension
  vector.  Put \( Q_S = M(\mathbf m)\hkq H_S \). Then
  \begin{equation*}
    Q_{(S,\delta)} = \Phi_{S,\delta}(\hat Q_S^\hks)
  \end{equation*}
  is a smooth hyperk\"ahler manifold that is a locally closed subset
  of~\( Q = M(\mathbf n) \hkq H \).

  Furthermore,
  \begin{equation*}
    Q = \coprod_{S,\delta} Q_{(S,\delta)} 
  \end{equation*}
  is the disjoint union over all such choices of \( S \) and \( \delta
  \).
\end{theorem}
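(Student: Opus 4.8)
The plan is to identify each piece $Q_{(S,\delta)}$, via the augmentation map $\Phi_{S,\delta}$, with the hyperk\"ahler manifold $\hat Q_S^\hks$; smoothness, the hyperk\"ahler structure and local closedness are then consequences of Proposition~\ref{prop:mod-embed}, while the partition statement follows by combining Proposition~\ref{prop:gl-decomp} (every point of $Q$ lies in at least one stratum) with Remark~\ref{rem:recover} (it lies in at most one).

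First I would settle the structure of a single $Q_{(S,\delta)}$. By Lemma~\ref{lem:QHKS} the space $Q_S^\hks$ is a hyperk\"ahler manifold, and $T^\ell$ acts freely on $Q_S^\hks\times(\HH\setminus\{0\})^\ell$ (already freely on the second factor), so the open subset $\hat Q_S^\hks=(Q_S^\hks\times(\HH\setminus\{0\})^\ell)\hkq T^\ell$ of the modification $\hat Q_S$, as in~\eqref{eq:Q1hks}, is again a smooth hyperk\"ahler manifold. Applying Proposition~\ref{prop:mod-embed} with $S_1=S$ and $S_2=\varnothing$, so that $H_{S_2}=H$ and $Q_2=M(\mathbf n)\hkq H=Q$ (the hypothesis $m_r=n=n_r$ holds automatically, since every $e_{ij}$ entering $\mathbf d$ vanishes in position $r$), we obtain that $\Phi_{S,\delta}$ restricts to an $\SUr$-equivariant embedding of $\hat Q_S^\hks$ into $Q$. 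Hence $Q_{(S,\delta)}$ is diffeomorphic to $\hat Q_S^\hks$, and transporting the hyperk\"ahler structure along $\Phi_{S,\delta}$ makes it a hyperk\"ahler manifold. Local closedness is then automatic: $Q$ is Hausdorff and $Q_{(S,\delta)}$, being a manifold, is a locally compact subspace of it, and a locally compact subspace of a Hausdorff space is open in its closure.

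Next I would show the strata cover $Q$. Let $q\in Q$ be represented by $(\alpha,\beta)\in M(\mathbf n)$ solving the hyperk\"ahler moment map equations~\eqref{eq:mmcomplex}--\eqref{eq:mmreal} for $H$. Since $\mathbf n$ is strictly ordered, Proposition~\ref{prop:gl-decomp} provides $g\in\SUr$ --- which, as in its proof, may be taken so that~\eqref{eq:lambda-regular} holds --- carrying $(\alpha,\beta)$ to a direct sum of a subquiver $(\tilde\alpha,\tilde\beta)$ with every $\tilde\alpha_i$ injective and every $\tilde\beta_i$ surjective and scalar summands of the form~\eqref{eq:ab}. The dimension vector $\mathbf m$ of $(\tilde\alpha,\tilde\beta)$ is ordered (the $\tilde\alpha_i$ being injective), and reading off the positions and widths of the scalar summands yields $S$ and $\delta$ with $\mathbf m=\mathbf n-\sum_{(i,j)\in S}\delta(i)e_{ij}$; here one uses that a scalar summand in the form~\eqref{eq:ab} has \emph{non-zero} scalars (so no proper tail sum of its $\lambda^\C_k$ vanishes) to see that distinct summands have distinct starting positions and distinct endpoints, so that $S$ is an injective subrelation of $\leqslant$. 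Since each scalar summand has vanishing total $\lambda^\C$-sum by~\eqref{eq:ab-lambda}, hence vanishing total $\R^3$-sum by~\eqref{eq:lambda-regular}, the subquiver $(\tilde\alpha,\tilde\beta)$ --- which inherits the same $\lambda^\R_i,\lambda^\C_i$ --- satisfies the hyperk\"ahler moment map equations at level~$0$ for $H_S$, and by Lemma~\ref{lem:GITs} represents a point of $Q_S^\hks$. Combining that point with the quaternionic scalars of the scalar summands, which by~\eqref{eq:ab-lambda} solve the $T^\ell$-moment map equations occurring in Proposition~\ref{prop:mod-embed}, I obtain a point of $\hat Q_S^\hks$ whose image under $\Phi_{S,\delta}$ is the point $g\cdot q$ represented by $g(\alpha,\beta)$. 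As $\Phi_{S,\delta}$ is $\SUr$-equivariant and $\hat Q_S^\hks$ is $\SUr$-invariant, $Q_{(S,\delta)}$ is $\SUr$-invariant, so $q=g^{-1}(g\cdot q)\in Q_{(S,\delta)}$. Disjointness of the strata is the content of Remark~\ref{rem:recover}: from a representative quiver $(\alpha',\beta')$ of largest possible rank of a point of $Q_{(S,\delta)}$ one recovers $m_i=\rank(\beta'_i\cdots\beta'_{r-1})$ and then $S$ and $\delta$ recursively from the ranks of the partial compositions of the $\beta'$; since $\Phi_{S,\delta}$ is injective on $\hat Q_S^\hks$ this recovery returns the original $(S,\delta)$, so a point cannot lie in two distinct strata.

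The step requiring most care is the covering one: besides the non-degeneracy argument above showing that $S$ comes out injective and $\mathbf m$ ordered, one must carry out the index bookkeeping that matches the quaternionic data of the scalar summands precisely with the $T^\ell$-moment map slots of Definition~\ref{def:augmentation} and Proposition~\ref{prop:mod-embed}, and one must justify that the largest-rank representative of Remark~\ref{rem:recover} exists and that the rank invariants used there are read off independently of all choices. Everything else --- smoothness, the hyperk\"ahler structure, local closedness --- reduces formally to Lemma~\ref{lem:QHKS}, Lemma~\ref{lem:GITs}, Proposition~\ref{prop:mod-embed} and Remark~\ref{rem:recover}.
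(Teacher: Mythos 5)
Your proposal follows essentially the same route as the paper's own (very brief) proof: Proposition~\ref{prop:mod-embed} with \( S_1=S \), \( S_2=\varnothing \) gives that \( Q_{(S,\delta)} \) is a smooth hyperk\"ahler manifold, open in its closure \( \Phi_{S,\delta}(\hat Q_S) \), Remark~\ref{rem:recover} gives disjointness, and Proposition~\ref{prop:gl-decomp} gives that every point of \( Q \) lies in some stratum. The additional details you supply (injectivity of \( S \) via the non-vanishing tail sums of the \( \lambda^\C_k \), the matching of the scalar summands with the \( T^\ell \)-moment map data, and the local-compactness argument for local closedness) are correct elaborations of steps the paper leaves implicit.
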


\begin{proof}
  By Proposition\nobreakspace \ref {prop:mod-embed}, \( Q_{(S,\delta)} \) is a smooth
  hyperk\"ahler manifold.  It is open in its closure, which is just \(
  \Phi_{S,\delta}(\hat Q_S) \).  Remark\nobreakspace \ref {rem:recover} implies that \(
  Q_{(S,\delta)} \cap Q_{(S',\delta ')} = \varnothing \) if \(
  (S,\delta) \neq (S', \delta') \).  Finally it follows from
  Proposition\nobreakspace \ref {prop:gl-decomp}, that every quiver satisfying the
  hyperk\"ahler moment map equations for \( H \) lies in some \(
  Q_{(S,\delta)} \).
\end{proof}

\begin{remark}
  When \( S \) is empty, so that \( \delta \) is empty, we have \(
  Q_{(S,\delta)}= Q^\hks \).

  Let us now consider the full flag case when \( r = n \) and \( n_i =
  i \) for \( i \leqslant n \), so that \( Q = M \hkq H \) is the
  universal hyperk\"ahler implosion for \( \SU(n) \).  We specify \(
  (S,\delta) \), by listing the elements of \( S \), ordered by the
  first component, followed by the corresponding values of \( \delta
  \).

  When \( n = 2 \) there are two strata, \( Q_{(\varnothing,\varnothing)} =
  Q^\hks \) and \( Q_{(\{(1,1)\},1)} \) which consists of the zero
  quiver constructed as the direct sum of \( 0 \rightleftarrows 0
  \rightleftarrows \C^2 \) and \( 0 \rightleftarrows \C
  \rightleftarrows 0 \).

  When \( n = 3 \) the possible injective subrelations \( S \) of \(
  \leqslant \) on \( \{1,2\} \) are \( \varnothing \), the singletons
  \( \{(1,1)\} \), \( \{(2,2)\} \), \( \{(1,2)\} \), and the subset \(
  \{(1,1), (2,2)\} \).  The strata are as follows:
  \begin{asparaenum}
  \item \( Q_{(\varnothing,\varnothing)} = Q^\hks \);
  \item \( Q_{(\{(1,1)\},1)} \) with elements given by the direct sum
    of a hyperk\"ahler stable quiver \( 0 \rightleftarrows 0
    \rightleftarrows \C^2 \rightleftarrows \C^3 \) and the zero quiver
    \( 0 \rightleftarrows \C \rightleftarrows 0 \rightleftarrows 0 \);
  \item \( Q_{(\{(2,2)\},1)} \) with elements given by the direct sum
    of a hyperk\"ahler stable quiver \( 0 \rightleftarrows \C
    \rightleftarrows \C \rightleftarrows \C^3 \) and the zero quiver
    \( 0 \rightleftarrows 0 \rightleftarrows \C \rightleftarrows 0 \);
  \item \( Q_{(\{(1,2)\},1)} \) with elements given by the direct sum
    of a hyperk\"ahler stable quiver \( 0 \rightleftarrows 0
    \rightleftarrows \C \rightleftarrows \C^3 \) and a quiver of the
    form \( 0 \rightleftarrows \C \rightleftarrows \C \rightleftarrows
    0 \) where the maps \( \C \rightleftarrows \C \) are isomorphisms;
  \item \( Q_{(\{(1,1),(2,2)\},(1,1))} \) with elements given by the
    direct sum of a hyperk\"ahler stable quiver \( 0 \rightleftarrows
    0 \rightleftarrows \C \rightleftarrows \C^3 \) and the zero
    quivers \( 0 \rightleftarrows \C \rightleftarrows 0
    \rightleftarrows 0 \) and \( 0 \rightleftarrows 0 \rightleftarrows
    \C \rightleftarrows 0 \);
  \item \( Q_{(\{(1,1),(2,2)\},(1,2))} \) which consists of the zero
    quiver constructed as the direct sum of \( 0 \rightleftarrows 0
    \rightleftarrows 0 \rightleftarrows \C^3 \) and the zero quivers
    \( 0 \rightleftarrows \C \rightleftarrows 0 \rightleftarrows 0 \)
    and \( 0 \rightleftarrows 0 \rightleftarrows \C^2 \rightleftarrows
    0 \).
  \end{asparaenum}
\end{remark}

\begin{corollary}
  \label{cor:dense}
  \( Q^\hks \) is a dense open subset of \( Q \) with complement of
  complex codimension at least \( 2 \).
\end{corollary}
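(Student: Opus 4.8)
The plan is to extract everything from the stratification $Q = \coprod_{(S,\delta)} Q_{(S,\delta)}$ of Theorem~\ref{thm:strat}, in which $Q^\hks = Q_{(\varnothing,\varnothing)}$. Openness of $Q^\hks$ in $Q$ has already been recorded (the paragraph preceding Lemma~\ref{lem:QHKS}). For density I would specialise the closure relation noted in the proof of Theorem~\ref{thm:strat}, namely $\overline{Q_{(S,\delta)}} = \Phi_{S,\delta}(\hat Q_S)$, to the empty data: then $\mathbf m = \mathbf n$ and $\ell = 0$, so $\hat Q_\varnothing = Q_\varnothing = M\hkq H = Q$ while $\Phi_{\varnothing,\varnothing}$ is the identity, whence $\overline{Q^\hks} = Q$. (Once the codimension estimate below is in hand one could argue density instead from the fact that $Q$ is a complex affine variety in which the complement of $Q^\hks$ is a finite union of locally closed subvarieties of strictly smaller dimension; the only point needing care is that $Q$ has no component disjoint from $\overline{Q^\hks}$, for which the closure relation is again the cleanest input.)

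The substance is the codimension bound, which I would prove by a dimension count. By Proposition~\ref{prop:mod-embed} the map $\Phi_{S,\delta}$ restricts to an embedding of the open set $\hat Q_S^\hks$, and a hyperkähler modification preserves dimension, so $\dim_\C Q_{(S,\delta)} = \dim_\C Q_S^\hks$. By Lemmas~\ref{lem:SL-stabiliser} and~\ref{lem:GITs} the group $H_S$ acts freely on the hyperkähler-stable locus inside the zero level set of its moment map (as recorded before Lemma~\ref{lem:QHKS}), so $\dim_\C Q_S^\hks = \dim_\C M(\mathbf m) - 2\dim_\R H_S$, and likewise $\dim_\C Q = \dim_\C M(\mathbf n) - 2\dim_\R H$. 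We are in the full flag case $r = n$, $n_k = k$; writing $d_k = \sum_{(i,j)\in S,\ i\leqslant k\leqslant j}\delta(i)$ so that $m_k = k - d_k$, and setting $d_0 = d_n = 0$ (which holds automatically), I would substitute $\dim_\C M(\mathbf v) = 2\sum_k v_k v_{k+1}$, $\dim_\R\SU(v_k) = v_k^2 - 1$ for $v_k\geqslant 1$ and $0$ for $v_k = 0$, and $\dim_\R H_S = \dim_\R H(\mathbf m) + \dim\bigl(\lie t_S\cap\Span\{e_k : m_k\geqslant 1\}\bigr)$ — the last from the exact sequence of Definition~\ref{def:Hs} together with the fact that the determinant map $\varphi\colon\tH(\mathbf m)\to\tT$ has image $\exp\Span\{e_k : m_k\geqslant 1\}$. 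The terms linear in the $d_k$ should cancel, precisely because $n_{k+1}-n_k\equiv 1$, leaving
\[
  \operatorname{codim}_\C Q_{(S,\delta)} = \sum_{k=0}^{n-1}(d_k-d_{k+1})^2 + 2E + 2\dim\bigl(\lie t_S\cap\Span\{e_k : m_k\geqslant 1\}\bigr),
\]
where $E = \#\{k : 1\leqslant k\leqslant n-1,\ m_k = 0\}$. I expect this simplification — keeping the $m_k = 0$ corrections and the effective torus dimension straight — to be the fiddliest step.

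Granting the identity, all three summands are non-negative integers, so it suffices to see that the first is at least $2$ when $(S,\delta)\neq(\varnothing,\varnothing)$. In that case $\mathbf d\neq 0$, so the integer sequence $0 = d_0, d_1, \dots, d_{n-1}, d_n = 0$ is non-constant; at an index where it attains its positive maximum there must be a strict increase somewhere to the left and a strict decrease somewhere to the right, so at least two of the consecutive differences $d_k - d_{k+1}$ are non-zero, each contributing at least $1$ to the sum. Hence $\operatorname{codim}_\C Q_{(S,\delta)}\geqslant 2$, as claimed.
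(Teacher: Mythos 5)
Your codimension count is essentially right, and it is a genuinely different route from the paper's: the paper gets the bound for free from parity (the complement of \( Q^\hks \) is a union of strata, each a hyperk\"ahler manifold and hence of even complex dimension, and each of dimension strictly less than \( \dim_\C Q \) once density is known, so the drop is at least \( 2 \)), whereas you compute the drop explicitly. I checked the algebra: in the full flag case the linear terms in the \( d_k \) do cancel and one gets \( \operatorname{codim}_\C Q_{(S,\delta)} = 2\sum_{k=1}^{n-1}(d_k^2 - d_kd_{k+1}) + 2E + 2t = \sum_{k=0}^{n-1}(d_k-d_{k+1})^2 + 2E + 2t \) with \( t = \dim\bigl(\lie t_S \cap \Span\{e_k : m_k\geqslant 1\}\bigr) \), and your final observation that a non-zero non-negative integer sequence with \( d_0 = d_n = 0 \) must both strictly increase and strictly decrease does give the first sum \( \geqslant 2 \). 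This is more precise than the paper's parity argument, but it only delivers the codimension half of the corollary.

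The genuine gap is in the density step, which as written is circular. The relation \( \overline{Q_{(S,\delta)}} = \Phi_{S,\delta}(\hat Q_S) \) is only asserted, not proved, inside the proof of Theorem~\ref{thm:strat}, and specialised to \( (S,\delta) = (\varnothing,\varnothing) \) it is literally the statement \( \overline{Q^\hks} = Q \) that you are trying to prove; moreover any honest justification of that relation for general \( (S,\delta) \) already needs to know that the hyperk\"ahler stable locus is dense (that is what makes \( \Phi_{S,\delta}(\hat Q_S^\hks) \) dense in \( \Phi_{S,\delta}(\hat Q_S) \)). Your fallback suffers from the same problem, since excluding a component of \( Q \) lying entirely in the complement of \( Q^\hks \) again appeals to the closure relation, and nothing in your dimension count rules such a component out (the count bounds the dimension of the other strata, not their disposition inside \( Q \)). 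The paper supplies exactly this missing ingredient by a perturbation argument upstairs: the \( H_\C \)-invariant functions \( \Phi_i = \det(\alpha_{i-1}\beta_{i-1} - \lambda_i^\C I) \) vanish wherever \( \alpha_i \) or \( \beta_i \) drops rank, and because \( (\alpha,\beta)\mapsto \beta\alpha \) is surjective one can vary \( \lambda_i^\C \) freely while remaining in \( \mu_\C^{-1}(0) \), so no \( \Phi_i \) vanishes identically on a non-empty open subset of \( \mu_\C^{-1}(0) \); hence the locus with all \( \alpha_i \) injective and all \( \beta_i \) surjective is open and dense upstairs and maps into \( Q^\hks \), giving density in \( Q \). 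Some argument of this kind (or an irreducibility statement for \( \mu_\C^{-1}(0) \)) must be added to your proposal before it proves the corollary.
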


\begin{proof}
  We observe using the complex equations that \( \rank (\alpha_{i-1}
  \beta_{i-1} - \lambda_i^\C I) = \rank (\beta_i \alpha_i) < \min (\rank
  \alpha_i, \rank \beta_i) \), so if \( \alpha_i \) or \( \beta_i \)
  has rank less than \( i \) then \( \lambda_i^\C \) is an eigenvalue of
  \( \alpha_{i-1} \beta_{i-1} \).

  Now, as the map \( (\alpha, \beta) \mapsto \beta \alpha \) is a
  surjection from \( \Hom(\C^j,\C^{j+1}) \oplus \Hom(\C^{j+1},\C^j)
  \) onto \( \Hom (\C^j,\C^j) \) for all \( j \) we may vary the
  \( \lambda_i^\C \) arbitrarily at each stage (without changing \(
  \alpha_j, \beta_j \) for \( j < i \)), and still stay within \(
  \mu_\C^{-1}(0) \). In particular the \( H_\C \)-invariant function
  \( \Phi_i \coloneqq \det(\alpha_{i-1} \beta_{i-1} - \lambda_i^\C I) \)
  cannot vanish identically on a non-empty open set in \( \mu_\C=0 \).
  The complement of \( \Phi_i^{-1}(0) \) is now open and dense in the
  locus where \( \mu_\C=0 \) for each \( i \).

  This implies that \( Q^\hks \) is dense in \( Q \); the complement
  is a union of strata of even complex dimension, hence the
  codimension statement follows.
\end{proof}

\section{The structure of the strata}
\label{sec:structure-strata}

We shall now take a closer look at the structure of the strata \(
Q_{(S,\delta)} \) appearing in the stratification of the quiver space
\( Q = M \hkq H \) with dimension vector \( (n_1,\dots,n_r = n) \)
given by Theorem\nobreakspace \ref {thm:strat}. Recall that \( Q_{(S,\delta)} \) is an open subset of 
a hyperk\"ahler modification of a quotient of the
form
\begin{equation*}
  M^\hks \hkq H_S = (M^\hks \hkq H) \hkq T_S = Q^\hks \hkq T_S 
\end{equation*}
for a different dimension vector \( (m_1,\dots,m_r) \) and a subtorus
\( T_S \) of \( \tT = T^{r-1} \). Thus we shall first study
the open stratum \( Q^\hks = Q_{(\varnothing, \varnothing)} = M^\hks
\hkq H\) consisting of the hyperk\"ahler stable quivers.

\bigbreak We first look at quivers
\begin{equation*}
  0 \stackrel[\beta_0]{\alpha_0}{\rightleftarrows}
  \C^{n_1}\stackrel[\beta_1]{\alpha_1}{\rightleftarrows}
  \C^{n_2}\stackrel[\beta_2]{\alpha_2}{\rightleftarrows}\dots
  \stackrel[\beta_{r-2}]{\alpha_{r-2}}{\rightleftarrows} \C^{n_{r-1}}
  \stackrel[\beta_{r-1}]{\alpha_{r-1}}{\rightleftarrows} \C^{n_r} = \C^n
\end{equation*}
satisfying the complex moment map equations
\begin{equation*}
  \alpha_{i-1} \beta_{i-1} = \beta_i \alpha_i + \lambda_i^\C I_{n_i}
\end{equation*}
for \( H \) where \( 0 = n_0 \leqslant n_1 \leqslant \dots \leqslant
n_r = n \) and all the \( \beta_i \) are surjective. Lemma\nobreakspace \ref {lem:GITs}
shows that the \( H_\C \)-orbits of all such quivers are closed and
hence represent points in \( Q^\hks \) by Lemma\nobreakspace \ref {lem:hks}.

As in the proof of Lemma\nobreakspace \ref {lem:sympstrata} we may choose bases for the
vector spaces \( V_i = \C^{n_i} \) so that
\begin{equation*}
  \beta_i = \left( 0_{n_i \times k_i} \mid I_{n_i \times n_i} \right)
\end{equation*}
where \( k_i = n_{i+1} - n_i \) is the dimension of the kernel of \(
\beta_i \).  This amounts to using the action of \( \tH_\C \times
\SL(n,\C) = \prod_{i=1}^r \GL(n_i,\C) \) to standardise the \( \beta_i
\), and we can replace \( \GL(n_i,\C) \) with \( \SL(n_i,\C) \) for
each \( i \) such that \( n_{i-1} < n_i \), so that if the dimension
vector is strictly ordered then it amounts to using the action of \(
H_\C \times \SL(n,\C) = \prod_{i=1}^r \SL(n_i,\C) \).

Let us now assume we are in the strictly ordered case.  As we saw in
Lemma\nobreakspace \ref {lem:sympstrata}, when the dimension vector is strictly ordered the
remaining freedom in the group action is the commutator of the
parabolic group \( P \) in \( \SL(n,\C) \) associated to the flag of
dimensions \( (n_1, n_2, \dots, n_r = n) \) in \( \C^n \).  In the
particular case when \( n_i =i \) for all \( i \) (that is, all \( k_i
\) equal 1), this freedom is exactly the maximal unipotent group \( N
\); that is, the commutator subgroup of the Borel group \( B \).

Now let us investigate what \( X =\alpha_{r-1} \beta_{r-1} \) tells us
when the dimension vector is strictly ordered once the \( \beta_i \)
have been standardised as above.  With respect to bases chosen as
above, the matrix of \( \alpha_i \beta_i \) is
\begin{equation}
  \label{eq:abform}
  \begin{pmatrix}
    0_{k_i \times k_i} & D_{k_i \times n_i} \\
    0_{n_i \times k_i} & -\lambda_i^\C I_{n_i} + \alpha_{i-1}
    \beta_{i-1}
  \end{pmatrix}
\end{equation}
for some \( D \).

Inductively it is now easy to show that \( \alpha_i \beta_i \) has
scalar blocks of size \( k_j \times k_j \) (\( j=i, i-1, \dots, 0 \))
down the diagonal, where the scalars (from top left going down) are \(
0, -\lambda_i^\C, -(\lambda_i^\C + \lambda_{i-1}^\C), \dots,
-(\lambda_i^\C + \dots + \lambda_1^\C) \).

In particular \( X = \alpha_{r-1} \beta_{r-1} \) lies in the
annihilator of the Lie algebra of the commutator \( [P,P] \) of the
parabolic determined by the integers \( k_j \).  Again, in the case \(
n_i=i \) we have that \( X \) lies in the Borel subalgebra \( \bmf =
\n^\circ \). Notice also that the diagonal entries of \( X \) are \( 0
\) \( (k_{r-1} \) times), \( -\lambda_{r-1}^\C \) \( (k_{r-2} \)
times), \( \dots, -(\lambda_{r-1}^\C + \dots + \lambda_1^\C) \) \(
(k_0=n_1 \) times).
 
Moreover any such \( X \) comes from a solution to our equations,
because we have that \( X \) kills \( \ker \beta_{r-1} \), and \(
\lambda_i^\C + \beta_i \alpha_i \) kills \( \ker \beta_{i-1} \) for \(
i < r-1 \).

Observe that for each \( i \), knowledge of \( \alpha_i \beta_i \)
determines \( \alpha_i \) (since \( \beta_i \) is surjective and
standardised), hence determines \( \beta_i \alpha_i \) (since \(
\beta_i \) is standardised), and hence, together with knowledge of \(
\lambda_i^\C \), determines \( \alpha_{i-1} \beta_{i-1} \) by the
equations.

As we can read off the \( \lambda_i^\C \) by looking at the diagonal
entries of \( X \), we see that knowledge of \( X \) determines all
the \( \alpha_i \) and hence the whole diagram.

In addition \( X \) is determined by its trace-free part, as its
leading entry \( X_{11}=0 \).  So, in summary, we have shown, in the
strictly ordered case, that if the \( \beta_i \) are surjective they
may be standardised (modulo \( H_\C \)) by an element of \( \SL(n,\C)
\), unique up to an element of the commutator of the parabolic
subgroup, and now the whole diagram is determined by \( X \) (or its
trace-free part) in the annihilator of the Lie algebra of this
commutator. Moreover any such \( X \) arises from such a diagram. We
summarise our results as follows.

\begin{proposition}
  \label{prop:betasurj}
  Consider a quiver diagram with strictly ordered dimension vector \(
  (n_1, \dots, n_r=n) \). Then the set of solutions to the complex
  moment map equations for \( H \) with \( \beta_i \) surjective,
  modulo the action of \( H_\C=\prod_{i=1}^{r-1} \SL(n_i,\C) \), may
  be identified with
  \begin{equation*}
    \SL(n,\C) \times_{[P,P]} [\p, \p]^\circ
  \end{equation*}
  where \( P \) is the parabolic subgroup associated to the flag \(
  (n_1, \dots, n_r=n) \), and \( [\p, \p]^\circ \) is the annihilator
  of the Lie algebra of the commutator subgroup of \( P \).

  In the special (full flag) case where \( n_i =i \) for all \( i \),
  we obtain the space
  \begin{equation*}
    \SL(n,\C) \times_N \bmf
  \end{equation*}
  where \( N \) is a maximal unipotent subgroup of \( \SL(n,\C) \) and
  \( \bmf = \n^\circ \) is a Borel subalgebra.
\end{proposition}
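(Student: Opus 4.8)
The plan is to read off the identification from the block analysis just completed, so the proof amounts to packaging that discussion while keeping track of the associated-bundle structure. Write \( Y \subset M(\mathbf n) \) for the solution set of the complex moment map equations for \( H \) with every \( \beta_i \) surjective. Two facts from the discussion above do the work. First, exactly as in Lemma~\ref{lem:sympstrata}, modulo the \( H_\C \)-action every point of \( Y \) may be moved by a suitable \( g \in \SL(n,\C) \) (acting by \( \alpha_{r-1}\mapsto g\alpha_{r-1} \), \( \beta_{r-1}\mapsto\beta_{r-1}g^{-1} \)) to a quiver whose maps are in the standard form \( \beta_i = \left(0_{n_i\times k_i}\mid I_{n_i\times n_i}\right) \) with \( k_i = n_{i+1}-n_i \), and the standardising element \( g \) is unique up to multiplication by the commutator \( [P,P] \) of the parabolic \( P \) attached to the flag \( (n_1,\dots,n_r=n) \). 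Second, once the \( \beta_i \) are standardised, the block computation of~\eqref{eq:abform} shows that \( (X)_0 = (\alpha_{r-1}\beta_{r-1})_0 \) lies in \( [\p,\p]^\circ \), that the whole quiver is recovered from \( (X)_0 \), and that every element of \( [\p,\p]^\circ \) occurs; so, writing \( Y^{\textup{std}} \) for the locus of quivers in \( Y \) with standardised \( \beta_i \), the assignment \( (X)_0 \mapsto (\alpha,\beta) \) is a bijection \( \Psi\colon[\p,\p]^\circ \to Y^{\textup{std}} \).

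Next I would assemble these into the map \( \Theta\colon\SL(n,\C)\times[\p,\p]^\circ \to Y \), \( \Theta(g,\Xi) = g\cdot\Psi(\Xi) \). The first fact says \( H_\C\cdot\Theta(\SL(n,\C)\times[\p,\p]^\circ) = Y \), so \( \Theta \) is surjective modulo \( H_\C \). For the fibres I would show that \( \Theta(g_1,\Xi_1) \) and \( \Theta(g_2,\Xi_2) \) lie in the same \( H_\C \)-orbit if and only if there is \( p\in[P,P] \) with \( g_2 = g_1 p^{-1} \) and \( \Xi_2 = p\Xi_1 p^{-1} \); here conjugation is the coadjoint action on \( [\p,\p]^\circ \), which does preserve this subspace because \( [P,P] \subset P \) normalises \( [\p,\p] \). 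The \enquote{if} direction uses that an element of \( [P,P] \) preserves the standard \( \beta_i \) up to \( H_\C \) and acts on \( (X)_0 \) by conjugation while \( H_\C \) fixes \( (X)_0 \); the \enquote{only if} direction runs the same computation backwards, recognising the element of \( H_\C\times\SL(n,\C) \) that carries \( \Psi(\Xi_1) \) to \( \Psi(\Xi_2) \) as a stabiliser of the standard \( \beta \)-configuration, which by Lemma~\ref{lem:sympstrata} projects isomorphically onto \( [P,P] \) through its \( \SL(n,\C) \)-factor. Hence \( \Theta \) descends to a bijection \( \SL(n,\C)\times_{[P,P]}[\p,\p]^\circ \to Y/H_\C \), equivariant for the residual actions of \( \SL(n,\C) \) and of \( T_\C^{r-1} = P/[P,P] \); since \( \Theta \), and locally its inverse (solve algebraically for a standardising \( g \)), are morphisms, this is an isomorphism of varieties, smooth where the quivers are hyperk\"ahler stable.

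Specialising to the full flag \( n_i = i \) gives \( P = B \) and \( [P,P] = N \), and under the trace-form identification \( [\p,\p]^\circ = \n^\circ = \bmf \), so the quotient becomes \( \SL(n,\C)\times_N\bmf \), matching the complex-symplectic quotient \( K_\C\times_N\n^\circ \) of \S\ref{sec:towards-hyperk-impl}. I expect the only delicate point to be the fibre computation in the second paragraph: getting the side on which \( [P,P] \) acts right, and correctly combining the \( \SL(n,\C) \)-conjugation on \( (X)_0 \), the triviality of the \( H_\C \)-action on \( (X)_0 \), and the precise description of the stabiliser of the standard \( \beta_i \) coming from Lemma~\ref{lem:sympstrata}.
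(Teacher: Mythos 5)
Your proposal is correct and follows essentially the same route as the paper: standardise the \( \beta_i \) using \( H_\C \times \SL(n,\C) \) as in Lemma~\ref{lem:sympstrata} with residual freedom \( [P,P] \), then use the block computation~\eqref{eq:abform} to see that the standardised quiver is equivalent to the datum of \( (X)_0 \in [\p,\p]^\circ \), every element of which occurs. The only difference is presentational: you make explicit the fibre check (that the stabiliser of the standard \( \beta \)-configuration projects isomorphically onto \( [P,P] \), acting by conjugation on \( (X)_0 \)) which the paper leaves implicit in its \enquote{unique up to an element of the commutator} statement.
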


\begin{remark}
  The space \( \SL(n,\C) \times_N \bmf \) has also occurred in work
  of Bielawski \cite{Bielawski:hyper-kaehler,Bielawski:GM}.
\end{remark}

We also have:

\begin{proposition}
  \label{prop:betasurj2}
  Consider a quiver diagram with strictly ordered dimension vector \(
  (n_1, \dots, n_r=n) \). Then the set of solutions to the complex
  moment map equations for \( H \) with \( \alpha_i \) injective and
  \( \beta_i \) surjective, modulo the action of \( H_\C =
  \prod_{i=1}^{r-1} \SL(n_i,\C) \), may be identified with
  \begin{equation*}
    \SL(n,\C) \times_{[P,P]} [\p, \p]^\circ_*
  \end{equation*}
  where \( P \) is the parabolic subgroup associated to the flag \(
  (n_1, \dots, n_r=n) \), and \( [\p, \p]^\circ_* \) is an open dense
  subset of \( [\p,\p]^\circ \). Moreover \( [\p, \p]^\circ_* \) is
  contained in the complement of the union over all parabolic
  subgroups \( P^{\prime} \) strictly containing \( P \) of the
  annihilator \( [\p^{\prime},\p^{\prime}]^\circ \) of the Lie algebra
  of the commutator subgroup of \( P^{\prime} \).

  In the full flag case where \( n_i =i \) for all \( i \), we obtain
  a space
  \begin{equation*}
    \SL(n,\C) \times_N \bmf_*
  \end{equation*}
  where \( N \) is a maximal unipotent subgroup of \( \SL(n,\C) \) and
  \( \bmf = \n^\circ \) is a Borel subalgebra.
\end{proposition}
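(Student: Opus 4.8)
The plan is to leverage Proposition~\ref{prop:betasurj}. That proposition, together with the explicit description preceding it, identifies the solutions of the complex moment map equations for \( H \) with all \( \beta_i \) surjective, modulo \( H_\C \), with \( \SL(n,\C)\times_{[P,P]}[\p,\p]^\circ \), the \( [\p,\p]^\circ \)-coordinate being (the trace-free part of) \( X=\alpha_{r-1}\beta_{r-1} \); moreover, once the \( \beta_i \) are standardised as \( \beta_i=(0_{n_i\times k_i}\mid I_{n_i}) \), the blocks of \( \alpha_i\beta_i \) — and hence each \( \alpha_i \), which is the last \( n_i \) columns of \( \alpha_i\beta_i \) — are recovered affine-linearly from \( X \). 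First I would note that the further condition ``every \( \alpha_i \) is injective'' is invariant under \( H_\C \) and under the residual \( \SL(n,\C)=\SL(n_r,\C) \) (which only replaces \( \alpha_{r-1} \) by \( g_r\alpha_{r-1} \)), so it cuts out a subset of the form \( \SL(n,\C)\times_{[P,P]}[\p,\p]^\circ_* \), where \( [\p,\p]^\circ_*\subseteq[\p,\p]^\circ \) is the \( [P,P] \)-stable set of those \( X \) for which the standardised quiver has all \( \alpha_i \) injective. Specialising \( n_i=i \) gives \( \SL(n,\C)\times_N\bmf_* \) with \( \bmf=\n^\circ \).

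Next I would establish that \( [\p,\p]^\circ_* \) is open and dense in \( [\p,\p]^\circ \). Openness is immediate: \( \rank\alpha_i=n_i \) for all \( i \) is a Zariski-open condition on \( X \), since the \( \alpha_i \) are affine-linear in \( X \). For density, note that \( [\p,\p]^\circ \) is a linear space, so it is enough to exhibit one \( X\in[\p,\p]^\circ_* \). Using the block-triangular structure recorded before Proposition~\ref{prop:betasurj}, the eigenvalues of \( \alpha_{i-1}\beta_{i-1} \) are the partial sums \( -(\lambda_{i-1}^\C+\dots+\lambda_j^\C) \), so if the \( \lambda_k^\C \) satisfy \( \sum_{k=j}^i\lambda_k^\C\ne0 \) for all \( 1\leqslant j\leqslant i\leqslant r-1 \), then \( -\lambda_i^\C I+\alpha_{i-1}\beta_{i-1} \) is invertible for each \( i \); as this matrix is the lower \( n_i\times n_i \) block of \( \alpha_i \), every \( \alpha_i \) is then injective. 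Such \( \lambda^\C \) occur, because the \( \lambda_k^\C \) are, up to the trace normalisation, the successive differences of the diagonal entries of \( X \) and may be prescribed freely within \( [\p,\p]^\circ \).

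The real work is to show \( [\p,\p]^\circ_*\cap[\p',\p']^\circ=\varnothing \) for every parabolic \( P'\supsetneq P \). Since \( P''\subseteq P' \) gives \( [\p'',\p'']\subseteq[\p',\p'] \), hence \( [\p'',\p'']^\circ\supseteq[\p',\p']^\circ \), and since every \( P'\supsetneq P \) contains a parabolic \( P'' \) with \( P\subseteq P''\subseteq P' \) obtained from \( P \) by merging a single pair of adjacent flag blocks \( B_a,B_{a+1} \), it suffices to treat that case. Writing \( \C^n \) in flag-adapted blocks, an element \( X \) of \( [\p,\p]^\circ \) is block upper triangular with scalar diagonal blocks \( c_1I,c_2I,\dots \), and a direct computation shows that \( X\in[\p'',\p'']^\circ \) precisely when \( c_a=c_{a+1} \) and the off-diagonal block \( X_{B_aB_{a+1}} \) vanishes. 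Now \( c_a=c_{a+1} \) says \( \lambda_i^\C=0 \) for \( i=r-a \), so \( \alpha_i=\begin{pmatrix}D_i\\\alpha_{i-1}\beta_{i-1}\end{pmatrix} \); the vanishing of \( X_{B_aB_{a+1}} \) is the vanishing of the first \( k_{i-1} \) columns of \( D_i \); and the first \( k_{i-1} \) columns of \( \alpha_{i-1}\beta_{i-1} \) are automatically zero, being \( \alpha_{i-1}\beta_{i-1} \) applied to \( \ker\beta_{i-1} \). Hence \( \alpha_i \) has \( k_{i-1}\geqslant1 \) zero columns and is not injective, so \( X\notin[\p,\p]^\circ_* \). (The extreme case \( i=1 \), merging \( B_{r-1} \) and \( B_r \), is the same computation with \( \alpha_0\beta_0=0 \), which forces \( \alpha_1=0 \).) The full flag assertion then follows on taking \( P=B \), \( [P,P]=N \).

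I expect this last step to be the main obstacle: the content is precisely the translation of the intrinsic membership \( X\in[\p',\p']^\circ \) into the concrete block vanishing that destroys injectivity of the appropriate \( \alpha_i \), after which everything is bookkeeping. One must be careful to prove only the stated inclusion \( [\p,\p]^\circ_*\subseteq[\p,\p]^\circ\setminus\bigcup_{P'\supsetneq P}[\p',\p']^\circ \) and not equality, which already fails for \( n=3 \): there are solutions with some \( \alpha_i \) non-injective whose \( X \) lies in no \( [\p',\p']^\circ \) with \( P'\supsetneq P \).
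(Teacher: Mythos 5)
Your proposal is correct and follows essentially the same route as the paper: starting from Proposition~\ref{prop:betasurj}, using the standardised form of the \( \beta_i \) and the block formula~\eqref{eq:abform} to recover the \( \alpha_i \) linearly from \( X \), and then an inductive block computation (reduction to merging one adjacent pair of flag blocks) to show that \( X \in [\p',\p']^\circ \) forces some \( \alpha_i \) to have zero columns. Your explicit openness\slash density argument and the caution that only an inclusion (not equality) is claimed are correct elaborations of points the paper leaves implicit.
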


\begin{proof}
  The statement that \( X \) has to lie in the complement of \(
  [\p^{\prime},\p^{\prime}]^\circ \) follows by an induction, using
  \eqref{eq:abform} and the given form of \( \beta_i \).
\end{proof}

How does the argument which gave us Propositions\nobreakspace \ref {prop:betasurj} and\nobreakspace  \ref {prop:betasurj2} need to
be modified if the dimension vector \( (n_1, \dots, n_r=n) \) is
ordered but not strictly ordered? We may still choose bases for the
vector spaces \( V_i = \C^{n_i} \) so that
\begin{equation*}
  \beta_i = \left( 0_{n_i \times k_i} \mid I_{n_i \times n_i} \right)
\end{equation*}
where \( k_i = n_{i+1} - n_i \) is the dimension of the kernel of \(
\beta_i \), but to do this we may need to use the action of a larger
group than \( H_\C \times \SL(n,\C) = \prod_{i=1}^r \SL(n_i,\C) \).
It suffices to use the action of \( \tH_\C \times \SL(n,\C) =
\bigl(\prod_{i=1}^{r-1}\GL(n_i,\C)\bigr) \times \SL(n_r,\C) \), and
then the remaining freedom is \( P \) itself, embedded in \( \tH_\C
\times \SL(n,\C) \) so that the projection of \( g \in P \) in \(
\GL(n_i,\C) \) is the bottom right hand \( n_i \times n_i \) block of
\( g \). Let
\begin{equation*}
  \tT_\C = \tH_\C /H_\C =  \prod_{i=1}^{r-1}
  \GL(n_i,\C)/\SL(n_i,\C) =  (\C^*)^{r-1}.  
\end{equation*}
Once we have quotiented by the action of \( H_\C \) we are using the
residual action of \( \tT_\C \times \SL(n,\C) \) to put the maps
\( \beta_i \) into standard form, and the remaining freedom is the
action of \( P \) embedded in \( \tT_\C \times \SL(n,\C) \) via
the inclusion in \( \SL(n,\C) \) and the homomorphism
\begin{equation}
  \label{eq:chi}
  \chi = (\chi_1, \dots, \chi_{r-1}) \colon P  \to \tT_\C =
  (\C^*)^{r-1}, 
\end{equation}
where \( \chi_i\colon P \to \C^* \) is the character given by the
determinant of the bottom right hand block of size \( n_i \times n_i
\).  Note that the kernel of \( \chi \) is the commutator subgroup \(
[P,P] \) of \( P \), but that \( \chi \) is not surjective if \( k_i =
0 \) (i.e.\ if \( n_{i+1} = n_i \)) for some \( i \).

The complex moment map equations
\begin{equation*}
  \alpha_{i-1} \beta_{i-1} = \beta_i \alpha_i + \lambda_i^\C I_{n_i}
\end{equation*}
tell us that
\begin{equation*}
  \alpha_i =
  \begin{pmatrix}
    \alpha_i^{(1)} \\
    \alpha_i^{(2)}
  \end{pmatrix}
  , 
\end{equation*}
where \( \alpha_i^{(1)} \) is \( k_i \times n_i \) and \(
\alpha_i^{(2)} \) is \( n_i \times n_i \) and
\begin{equation*}
  \alpha_{i+1}^{(2)} =  
  \begin{pmatrix}
    - \lambda^\C_{i+1}I_{k_i \times k_i} & \alpha_i^{(1)} \\
    0_{n_i \times k_i} & \alpha_i^{(2)} - \lambda^\C_{i+1}I_{n_i \times n_i}
  \end{pmatrix}
  . 
\end{equation*}
Inductively it follows that \( \alpha_i^{(2)} \) has scalar blocks of
size \( k_j \times k_j \) (for \( j= r-1, \dots, 0 \)) down the diagonal,
where the scalars (from top left going down) are 
\begin{equation*}
  -\lambda_i^\C,
  -(\lambda_i^\C + \lambda_{i-1}^\C), \dots, -(\lambda_i^\C + \dots +
  \lambda_1^\C). 
\end{equation*}
Furthermore the quiver is determined by knowledge of \( \lambda_1^\C,
\dots, \lambda_{r-1}^\C \) together with
\begin{equation*}
  X = \alpha_{r-1} \beta_{r-1} =
  \begin{pmatrix}
    0_{k_{r-1} \times k_{r-1}} & \alpha_{r-1}^{(1)} \\
    0_{n_{r-1} \times k_{r-1}} & \alpha_{r-1}^{(2)}
  \end{pmatrix}
\end{equation*} 
which is block triangular with scalar blocks of size \( k_j \times k_j
\) (\( j=  r-1, \dots, 0 \)) down the diagonal,  where the scalars are
\(  0,   -\lambda_{r-1}^\C,  -(\lambda_{r-1}^\C  +  \lambda_{r-2}^\C),
\dots, -(\lambda_{r-1}^\C  + \dots +  \lambda_1^\C) \).  Note  that we
can only  recover from  \( X  \) those \(  \lambda_{r-1}^\C +  \dots +
\lambda_i^\C  \)  for  which  \(  k_{i-1}  > 0  \);  however  from  \(
\lambda_1^\C,  \dots, \lambda_{r-1}^\C  \)  (which we  can regard  as
determining  an  element  of  the  dual  of  the  Lie  algebra  of  \(
\tT_\C \)) and the off-diagonal blocks  of \( X \) (that is, the
projection of \(  X \) to \( \p^\circ \)) we  can recover the quiver
with \( \beta_i \) in standardised form.

\begin{remark}
  Now let \( S \) be an injective subrelation of \( \leqslant \) on \(
  \{1,\dots,r-1\} \).  Let \( T_S \) be the subtorus of \( \tT =
  T^{r-1} \) as in Definition\nobreakspace \ref {def:Hs} and let \( H_S = \phi^{-1} (T_S)
  \) be the corresponding subgroup of~\( \tH \) containing~\( H \).
  Then the complex moment map equations for \( H_S \) are given by the
  complex moment map equations
  \begin{equation*}
    \alpha_{i-1} \beta_{i-1} = \beta_i \alpha_i + \lambda_i^\C I_{n_i}
  \end{equation*}
  for \( H \) together with the equations
  \begin{equation*}
    \lambda^\C_i + \lambda^\C_{i+1} + \dots + \lambda^\C_j = 0
    \qquad\text{for \( (i,j) \in S \),}
  \end{equation*}
  which say that \( (\lambda_1^\C,\dots, \lambda_{r-1}^\C) \in
  \LIE(\tT_\C)^* \) lies in the annihilator of \( \LIE({T_S})_\C
  \).  Furthermore the residual action of \( (H_S)_\C/H_\C = (T_S)_\C
  \) is given by its embedding as a subgroup of \( \tT_\C \) and
  thus is a subgroup of \( \tT_\C \times \SL(n,\C) \).
\end{remark}

Putting all this together gives us the following extension of
Proposition\nobreakspace \ref {prop:betasurj2}.  It is a generalisation, since when the
dimension vector is strictly ordered then the homomorphism \(
\chi\colon P \to \tT_\C \) defined at \eqref{eq:chi} above is
surjective with kernel \( [P,P] \), so that
\begin{equation*}
  (\tT_\C \times \SL(n,\C)) / P \cong \SL(n,\C)/[P,P].
\end{equation*}

\begin{proposition}
  \label{prop:betasurjgen2}
  Consider a quiver diagram with ordered dimension vector \( (n_1,
  \dots, n_r=n) \). Then the set of solutions to the complex moment
  map equations for \( H \) with \( \alpha_i \) injective and \(
  \beta_i \) surjective, modulo the action of \(
  H_\C=\prod_{i=1}^{r-1} \SL(n_i,\C) \), may be identified with an
  open subset of the cotangent bundle to
  \begin{equation*}
    (\tT_\C \times \SL(n,\C)) / P,
  \end{equation*} or equivalently with
  \begin{equation*}
    (\tT_\C \times \SL(n,\C)) \times_P (\LIE(\tT_\C)^*
    \oplus  \p^\circ)^* 
  \end{equation*}
  where \( P \) is the parabolic subgroup associated to the flag \(
  (n_1, \dots, n_r=n) \). Here
  \begin{equation*}
    \tT_\C = \tH_\C/H_\C = (\C^*)^{r-1}
  \end{equation*}
  with \( P \) acting on \( \SL(n,\C) \) by left multiplication and on
  \( \tT_\C \) via the characters given by the determinants of
  the bottom right hand blocks of size \( n_i \times n_i \), and \(
  (\LIE(\tT_\C)^* \oplus \p^\circ)^* \) is an open dense
  subset of \( \LIE(\tT_\C)^* \oplus \p^\circ \).  Moreover
  the set of solutions to the complex moment map equations for \(
  H_S=\phi^{-1}(T_S) \) (defined as in Definition\nobreakspace \ref {def:Hs}) with \( \alpha_i \)
  injective and \( \beta_i \) surjective, modulo the action of \(
  (H_S)_\C \), may be identified with an open subset of the cotangent
  bundle to
  \begin{equation*}
    \tT_\C \times \SL(n,\C) / {(T_S)_\C \times P}
  \end{equation*}
  where \( (T_S)_\C = (H_S)_\C/H_\C \) is a subgroup of \( \tT_\C \) and
\( (T_S)_\C \times P \) is embedded as a subgroup of \( \tT_\C \times
  \SL(n,\C) \) via
\begin{equation*}
(t,g) \mapsto (t\chi(g), g)
\end{equation*}
with \( \chi \colon P \to \tT_\C \) as defined at \eqref{eq:chi}.
\end{proposition}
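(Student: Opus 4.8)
The plan is to extract the identification directly from the standardisation of the maps $\beta_i$ carried out in the paragraphs immediately preceding the proposition, in exactly the way the (one-line) proof of Proposition~\ref{prop:betasurj2} does in the strictly ordered case. The only genuinely new ingredients are the bookkeeping of the torus factor $\tT_\C$ and of the character $\chi$ of~\eqref{eq:chi}. Throughout I write $G' = \tT_\C \times \SL(n,\C)$ for brevity.

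First I would assemble the coset picture. Given a solution $(\alpha,\beta)$ of the complex moment map equations for~$H$ with every $\beta_i$ surjective, the action of the larger group $\tH_\C \times \SL(n,\C) = \bigl(\prod_{i=1}^{r-1}\GL(n_i,\C)\bigr) \times \SL(n,\C)$ puts each $\beta_i$ into the standard form $(0\mid I)$, and, as recalled before the proposition, the subgroup preserving this standard form is $P$, embedded by sending $g \in P$ to the tuple of its bottom right $n_i\times n_i$ blocks together with~$g$. Since $H_\C$ is normal in $\tH_\C \times \SL(n,\C)$ with quotient $G'$, passing to the $H_\C$-quotient identifies the set of solutions for~$H$ with all $\beta_i$ surjective, modulo~$H_\C$, with $G' \times_P W$, where $W$ is the affine variety of solutions in standard form and the induced embedding $P \to G'$ is now $g \mapsto (\chi(g),g)$. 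Next I would identify $W$: the inductive computation of the blocks $\alpha_i^{(2)}$ recorded before the proposition shows that a solution in standard form is determined by, and freely parametrised by, the pair consisting of $\lambda = (\lambda^\C_1,\dots,\lambda^\C_{r-1}) \in \LIE(\tT_\C)^*$ and the projection of $X = \alpha_{r-1}\beta_{r-1}$ to $\p^\circ$, so $W \cong \LIE(\tT_\C)^* \oplus \p^\circ$ as a $P$-variety. Imposing that each $\alpha_i$ be injective (surjectivity of the $\beta_i$ being automatic once they are standardised) cuts out a Zariski-open subset $(\LIE(\tT_\C)^* \oplus \p^\circ)^*$; it is non-empty (cf.\ Proposition~\ref{prop:allorbits}), hence dense since the ambient space is irreducible, so its complement is a proper closed subvariety. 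This gives the identification of the solution set modulo $H_\C$ with $G' \times_P (\LIE(\tT_\C)^* \oplus \p^\circ)^*$.

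Then I would recognise this as an open subset of a cotangent bundle. For the image $P' \subset G'$ of $P$ under $g \mapsto (\chi(g),g)$ one has $T^*(G'/P') = G' \times_{P'} \mathfrak a$, where $\mathfrak a \subset \LIE(G')^*$ is the annihilator of $\LIE(P')$. Identifying $\sln(n,\C)^* \cong \sln(n,\C)$ by the trace form, $\mathfrak a = \{(\eta,\zeta) \in \LIE(\tT_\C)^* \oplus \sln(n,\C) : \zeta|_\p = -(d\chi)^*\eta\}$; the projection $(\eta,\zeta)\mapsto\eta$ is a $P$-equivariant surjection onto $\LIE(\tT_\C)^*$ with kernel $\{0\}\oplus\p^\circ$, and since $\tT_\C$ is abelian $P$ acts trivially on $\LIE(\tT_\C)^*$, so this extension splits $P$-equivariantly — indeed canonically, $\zeta$ being taken to be the unique element of the $\mathrm{Ad}(P)$-fixed subspace $\sln(n,\C)^P$ with $\zeta|_\p = -(d\chi)^*\eta$. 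Hence $\mathfrak a \cong \LIE(\tT_\C)^* \oplus \p^\circ$ as $P$-modules, and matching fibres identifies $G' \times_P (\LIE(\tT_\C)^* \oplus \p^\circ)^*$ with an open subset of $T^*(G'/P') = T^*\bigl((\tT_\C \times \SL(n,\C))/P\bigr)$. (When $\mathbf n$ is strictly ordered, $\chi$ is onto with kernel $[P,P]$, so $G'/P' \cong \SL(n,\C)/[P,P]$ and $\mathfrak a \cong [\p,\p]^\circ$, recovering Proposition~\ref{prop:betasurj2}.)

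Finally, for the $H_S$ version I would note that replacing $H_\C$ by $(H_S)_\C$ does two things: it imposes the extra equations $\sum_{k=i}^j \lambda^\C_k = 0$ for $(i,j)\in S$, i.e.\ it constrains $\lambda$ to lie in the annihilator of $\LIE((T_S)_\C)$ in $\LIE(\tT_\C)^*$ (as recorded in the Remark preceding the proposition), and it introduces a further quotient by $(T_S)_\C = (H_S)_\C/H_\C \subset \tT_\C$. Re-running the three steps above with these changes, the residual group acting on the standardised solutions is $(T_S)_\C \times P$, embedded in $G'$ by $(t,g)\mapsto(t\chi(g),g)$, and one obtains an open subset of $T^*\bigl((\tT_\C \times \SL(n,\C))/((T_S)_\C \times P)\bigr)$, as claimed. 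I expect the $P$-equivariant identification of the cotangent fibre $\mathfrak a$ with $\LIE(\tT_\C)^* \oplus \p^\circ$ — keeping the twist by $\chi$ correctly in hand — to be the only point needing genuine care; the rest is the bookkeeping already set up in the text before the statement.
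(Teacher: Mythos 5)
Your overall route is the paper's: the paper gives no formal proof, only the discussion immediately preceding the statement (standardise the \( \beta_i \) using \( \tH_\C \times \SL(n,\C) \), observe that the residual freedom is \( P \) embedded via \( g \mapsto (\chi(g),g) \), and note that a standardised solution is recorded exactly by \( \lambda \in \LIE(\tT_\C)^* \) together with the projection of \( X \) to \( \p^\circ \)), plus a remark afterwards deducing the statement from Proposition\nobreakspace \ref {prop:betasurj2} by contracting the quiver. Your steps (1), (2) and the \( H_S \) bookkeeping reproduce this faithfully. However, the step you yourself flag as the one "needing genuine care" — the identification of the cotangent fibre — contains a genuine error. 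You claim the extension \( 0 \to \p^\circ \to \mathfrak a \to \LIE(\tT_\C)^* \to 0 \) (with \( \mathfrak a \) the annihilator of \( \LIE(P') \)) splits \( P \)-equivariantly, "canonically" via the \( \mathrm{Ad}(P) \)-fixed subspace \( \sln(n,\C)^P \). But \( P \) contains a Borel subgroup, so \( \sln(n,\C)^P = 0 \): there is no such \( \zeta \) unless \( (d\chi)^*\eta = 0 \). Worse, the extension genuinely does not split whenever \( \chi \) is nontrivial. For \( n=2 \), \( P=B \), one computes \( \mathfrak a = \bigl\{\bigl(\eta, \begin{pmatrix} \eta/2 & z \\ 0 & -\eta/2 \end{pmatrix}\bigr)\bigr\} \) with \( \begin{pmatrix} a & c \\ 0 & a^{-1}\end{pmatrix} \) acting by \( (\eta,z) \mapsto (\eta, a^2 z - ac\,\eta) \); the fixed set is \( \{0\} \), so no \( P \)-invariant complement to \( \{\eta = 0\} \) exists. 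Triviality of the quotient module (your "\( \tT_\C \) abelian" remark) does not force a splitting. The companion assertion in your step (2), that \( W \cong \LIE(\tT_\C)^* \oplus \p^\circ \) \emph{as a \( P \)-variety} with the direct-sum action, is equally false: conjugating \( X = D(\lambda) + X_{\p^\circ} \) (with \( D(\lambda) \) the block-scalar diagonal part) by \( p \in P \) shifts the \( \p^\circ \)-component by the \( \lambda \)-dependent term \( pD(\lambda)p^{-1} - D(\lambda) \).

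The repair is that these two twists are the same twist, and one should never pass through a direct sum of \( P \)-modules. The induced \( P \)-action on the standardised data \( (\lambda, X) \) is \( (\lambda, X) \mapsto (\lambda, pXp^{-1}) \), and the map sending \( (\lambda, X) \) to the linear functional \( (\eta, Y) \mapsto \langle \lambda,\eta\rangle + \tr(XY) \) identifies \( W \) \( P \)-equivariantly with \( \mathfrak a \) itself (one checks directly that \( X \) block upper triangular with diagonal blocks determined by \( \lambda \) is exactly the condition \( \zeta|_{\LIE(P')} = 0 \) after the sign/normalisation bookkeeping), so \( G' \times_P W \) is an open subset of \( G' \times_{P'} \mathfrak a = T^*(G'/P') \) with no splitting required; the expression \( \LIE(\tT_\C)^* \oplus \p^\circ \) in the statement is then only a vector-space description of this fibre, the \( P \)-action being the transported (twisted) one. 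The same applies verbatim in the \( H_S \) case after restricting \( \lambda \) to the annihilator of \( \LIE((T_S)_\C) \) and enlarging the group to \( (T_S)_\C \times P \). Alternatively, you could avoid the issue entirely by the paper's own alternative: contract the quiver at each index with \( n_i = n_{i+1} \) (losing one \( \lambda_i^\C \in \C \) and one \( \C^* \) factor per contraction) and quote Proposition\nobreakspace \ref {prop:betasurj2} for the resulting strictly ordered quiver.
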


\begin{remark}
  An alternative argument notes that since the \( \alpha_i \) are
  injective and the \( \beta_i \) are surjective then \( \alpha_i \)
  and \( \beta_i \) are isomorphisms whenever \( n_i= n_{i+1} \) and
  so we can contract the quiver as in Proposition\nobreakspace \ref {prop:gl-decomp} until we obtain
  a quiver of the same form but with strictly ordered dimension
  vector.  For each such contraction the information lost is \(
  \lambda_i^\C \in \C \) and the difference between the actions of \(
  \GL(V_{i+1}) \) and \( \SL(V_{i+1}) \) is \(
  \GL(V_{i+1})/{\SL(V_{i+1})} \cong \C^* \), so we can use this point of
  view to deduce Proposition\nobreakspace \ref {prop:betasurjgen2} from Proposition\nobreakspace \ref {prop:betasurj2}.
\end{remark}

\begin{remark}
  \label{rem:lambdaconstr}
  In fact for certain values of \( \lambda^\C \) we get restrictions
  on which parabolics can occur with a non-empty solution set in
  Proposition\nobreakspace \ref {prop:betasurjgen2}. More precisely, we observe that if \(
  \lambda_{i+1}^\C=0 \) then the complex moment map equations imply \(
  \alpha_{i+1} \) maps \( \ker \beta_i \) into \( \ker \beta_{i+1} \).
  If all \( \beta_j \) are surjective and \( \alpha_j \) are
  injective, this means that \( k_i \leqslant k_{i+1} \).  In
  particular if all \( \lambda_i^\C \) are zero, then the \( k_i \)
  form a \emph{non-decreasing} partition of \( n \). This is to be
  expected, as such partitions count the number of \emph{unordered}
  partitions, that is, the strata of the nilpotent variety.
\end{remark}

\begin{remark}
  Now suppose that we are in the situation of Theorem\nobreakspace \ref {thm:strat}.
  Recall that then the stratum \( Q_{(S,\delta)} \) is the image of
  the hyperk\"ahler embedding into \( Q = M \hkq H \) defined in
  Proposition\nobreakspace \ref {prop:mod-embed} with \( S_1 = S \) and \( S_2 = \varnothing \)
  of the hyperk\"ahler modification \( \hat Q_1^\hks \) of \( Q_1^\hks \)
  as in~\eqref{eq:Q1hks}.

  Observe that \( \hat Q_1^\hks = \hat M(\mathbf m)^\hks \hkq H_S \), where
  \begin{equation}
    \label{eq:hMell}
    \hat M(\mathbf m)^\hks = (M(\mathbf m)^\hks \times
    (\HH\setminus\{0\})^\ell) \hkq T^\ell.
  \end{equation}
  Let \( h = h_M-h_\ell \) be the hyperk\"ahler moment map used
  in~\eqref{eq:hMell}; this takes values in \( \R^\ell \oplus \C^\ell
  \).  We have that \( \hat M(\mathbf m)^\hks = (h^\C)^{-1}(0) \sslash
  T_\C^\ell \).  Put \( M_0 = (h_M^\C)^{-1}((\C\setminus\{0\})^\ell)
  \).  Now the \( i \)th component of \( h_\ell^\C \) is just \(
  a^{(i)}+\jj b^{(i)} \mapsto a^{(i)}b^{(i)} \), so for \( m\in M_0
  \), \( h_M^\C(m) = h_\ell^\C((a^{(i)}+\jj b^{(i)})) \) implies that
  each \( a^{(i)} \) and \( b^{(i)} \) is non-zero, and hence the \(
  T_\C^\ell \) orbit through the point is closed.  In particular, the
  holomorphic map \( m \mapsto (m,h_M^\C(m) - \jj\mathbf 1) \), \(
  \mathbf 1 = (1,\dots,1) \in \R^\ell \), realises \( M_0 \) as an
  open subset of \( h_\C^{-1}(0)\sslash T_\C^\ell = \hat M(\mathbf
  m)^\hks \).  This map is equivariant for both \( H_S^\C \) and \(
  \SL(n,\C) \), and so descends to a holomorphic map on an open dense
  set of \( Q_1 \) to~\( \hat Q_1^\hks\).  Exploiting the action of \(
  \SUr \), we can cover \( \hat Q_1^\hks \) by such open sets.

  Now note that if \( m_{j+1} = m_j \) for some \( j \), then, since
  \( n_{j+1} > n_j \) there exists some \( i \leqslant j \) such that
  \( (i,j) \in S \). This implies that the homomorphism
  \begin{equation*}
    (T_S)_\C \times P \to \tT_\C
  \end{equation*}
  given by \( (t,g) \mapsto t\chi(g) \) is surjective, and its kernel
  is isomorphic to the subgroup
  \begin{equation*}
    P_S = \{ g \in P:\chi(g) \in (T_S)_\C \}
  \end{equation*}
  of \( P \) containing \( [P,P] \). Thus
  \begin{equation*}
    \tT_\C \times \SL(n,\C) /(T_S)_\C \times P
  \end{equation*}
  can be identified with \( \SL(n,\C)/P_S \), and its cotangent bundle
  can be identified with
  \begin{equation*}
    \SL(n,\C) \times_{P_S} \p_S^\circ
  \end{equation*}
  where \( \p^\circ \) is the annihilator in the dual of the Lie
  algebra of \( \SL(n,\C) \) of the Lie algebra \( \p_S \) of \( P_S
  \).
\end{remark}

Thus we have

\begin{theorem}
  \label{thm:str-struct}
  In the situation of Theorem\nobreakspace \ref {thm:strat} each stratum \( Q_{(S,\delta)}
  \) of \( Q \) is a union of open subsets, one for each element of \(
  \SUr \), each of which can be identified with
  \begin{equation*}
    \SL(n,\C) \times_{P_S} (\p_S)^\circ_*.
  \end{equation*}
  Here \( (\p_S)^\circ_* \) is an open subset of the annihilator \(
  (\p_S)^\circ \) in \( \mathrm{Lie}(\SL(n,\C))^* \) of the Lie
  algebra \( \p_S \) of a subgroup \( P_S \) of the standard parabolic
  subgroup \( P \) of \( \SL(n,\C) \) associated to the flag \(
  \C^{m_1} \leqslant \dots \leqslant \C^{m_r}=\C^n \) where
  \begin{equation*}
    m_k = k - d_k,
  \end{equation*}
  \( d_k \) the \( k \)th component of \( \mathbf d = \sum_{(i,j)\in
  S} \delta(i)e_{ij} \).  More precisely, \( P_S = \{ g \in P:\chi(g)
  \in (T_S)_\C \} \) where \( (T_S)_\C = (H_S)_\C/H_\C \) is the
  subgroup of \( \tT_\C = \tH_\C/H_\C \) defined as in
  Definition\nobreakspace \ref {def:Hs} and \( \chi \colon P \to \tT_\C \) is defined
  at~\eqref{eq:chi}.
\end{theorem}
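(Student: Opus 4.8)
The plan is to assemble the statement from Theorem~\ref{thm:strat}, Proposition~\ref{prop:mod-embed}, Proposition~\ref{prop:betasurjgen2} and the discussion immediately preceding the theorem. By Theorem~\ref{thm:strat} the stratum \( Q_{(S,\delta)} \) is the image under the augmentation map \( \Phi_{S,\delta} \) of the open set \( \hat Q_S^\hks \) inside the hyperk\"ahler modification \( \hat Q_S \) of \( Q_S = M(\mathbf m)\hkq H_S \), where \( \mathbf m = \mathbf n - \mathbf d \) and \( \mathbf d = \sum_{(i,j)\in S}\delta(i)e_{ij} \) (so \( m_k = k - d_k \) in the full flag case); and by Proposition~\ref{prop:mod-embed}, taken with \( S_1 = S \) and \( S_2 = \varnothing \), the map \( \Phi_{S,\delta} \) is an \( \SUr \)-equivariant embedding on \( \hat Q_S^\hks \). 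Thus it suffices to produce, for the standard complex structure, a holomorphic model of the locus of \( \hat Q_S^\hks \) where all quiver maps have maximal rank and all augmentation scalars are non-zero; since every point of \( \hat Q_S^\hks \) can be carried into this locus by an element of \( \SUr \), translating the model by the elements of \( \SUr \) produces the open cover claimed in the theorem.

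First I would pass from the modification \( \hat Q_S^\hks \) down to the plain quotient \( Q_S \). The discussion preceding the theorem shows that the holomorphic map \( m\mapsto(m, h_M^\C(m) - \jj\mathbf 1) \) realises \( M_0 = (h_M^\C)^{-1}((\C\setminus\{0\})^\ell) \) as an open subset of \( \hat M(\mathbf m)^\hks = h_\C^{-1}(0)\symp T_\C^\ell \), compatibly with the \( (H_S)_\C \)-action and with the residual \( \SL(n,\C) \)-action; hence it descends to an open holomorphic embedding of a dense open subset of \( Q_S \) into \( \hat Q_S^\hks \), and these images cover \( \hat Q_S^\hks \) as we run over \( \SUr \). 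So it is enough to describe the locus in \( Q_S \) of points admitting a representative with every \( \alpha_i \) injective and every \( \beta_i \) surjective. That locus is precisely the situation of Proposition~\ref{prop:betasurjgen2} applied with dimension vector \( \mathbf m \) and group \( H_S = \varphi^{-1}(T_S) \): the solutions of the complex moment map equations for \( H_S \) with \( \alpha_i \) injective and \( \beta_i \) surjective, modulo \( (H_S)_\C \), are identified with an open subset of the cotangent bundle of \( \tT_\C\times\SL(n,\C)/(T_S)_\C\times P \), where \( P \) is the standard parabolic of the flag \( \C^{m_1}\leqslant\dots\leqslant\C^{m_r}=\C^n \).

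It remains to carry out the identification of the homogeneous base. Because \( \mathbf n \) is strictly ordered we have \( n_{j+1}>n_j \) for every \( j \), so whenever \( m_{j+1}=m_j \) the positive defect \( d_j = n_j - m_j \) must be supplied by some \( (i,j)\in S \); consequently the homomorphism \( (t,g)\mapsto t\chi(g) \), with \( \chi \) as in~\eqref{eq:chi}, maps \( (T_S)_\C\times P \) onto \( \tT_\C \) with kernel the subgroup \( P_S = \{\,g\in P:\chi(g)\in(T_S)_\C\,\} \) of \( P \), which contains \( [P,P] \). This gives \( \tT_\C\times\SL(n,\C)/(T_S)_\C\times P\cong\SL(n,\C)/P_S \) and identifies its cotangent bundle with \( \SL(n,\C)\times_{P_S}\p_S^\circ \); transporting the open dense subset \( (\LIE(\tT_\C)^*\oplus\p^\circ)^* \) of Proposition~\ref{prop:betasurjgen2} across this isomorphism yields the open subset \( (\p_S)^\circ_* \) of \( \p_S^\circ \). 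Feeding this back through \( \Phi_{S,\delta} \) and assembling over the elements of \( \SUr \) then gives the theorem, with the formula \( m_k = k - d_k \) read off from \( \mathbf m = \mathbf n - \mathbf d \).

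The main obstacle I expect is making the passage from the hyperk\"ahler modification \( \hat Q_S \) to the quotient \( Q_S \) genuinely rigorous at the level of complex structures: one must check carefully that the holomorphic map of the preceding discussion really is equivariant for \( (H_S)_\C \) and for the residual \( \SL(n,\C) \), and that its translates under \( \SUr \) really exhaust \( \hat Q_S^\hks \), so that Proposition~\ref{prop:betasurjgen2}---which is formulated for \( M(\mathbf m) \) and not for its modification---may legitimately be applied and the various open dense loci match up under all the identifications in play. The bookkeeping of which augmentation scalars \( a^{(i)},b^{(i)} \) are forced to be non-zero, and of how the torus \( T^\ell \) is absorbed into \( (H_S)_\C \)-orbits as in Remark~\ref{rem:residual}, is the only delicate point; the remainder is routine assembly of results already established.
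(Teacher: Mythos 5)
Your assembly — Theorem\nobreakspace\ref{thm:strat} and Proposition\nobreakspace\ref{prop:mod-embed} to reduce to \( \hat Q_S^\hks \), the holomorphic map \( m\mapsto(m,h_M^\C(m)-\jj\mathbf 1) \) together with the \( \SUr \) action to pass between the modification and \( Q_S \), Proposition\nobreakspace\ref{prop:betasurjgen2} for the cotangent-bundle model, and the surjectivity of \( (t,g)\mapsto t\chi(g) \) (forced by strict ordering of \( \mathbf n \) when \( m_{j+1}=m_j \)) to identify the base with \( \SL(n,\C)/P_S \) — is exactly the paper's own argument, which is carried out in the remark immediately preceding the theorem. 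So the proposal is correct and takes essentially the same route; the "delicate points" you flag are precisely what that remark supplies.
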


\begin{corollary}
  \label{cor:full-struct}
  In the case of a full flag, when \( r=n \) and \( n_j = j \) for \(
  j=0,\dots , n \), this description applies to each stratum \(
  Q_{(S,\delta)} \) of the stratification described in Theorem\nobreakspace \ref {thm:strat}
  of the universal hyperk\"ahler implosion \( Q = M \hkq H \) for \(
  K=\SU(n) \).
\end{corollary}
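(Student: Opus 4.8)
The plan is to recognise that the full flag case is simply the specialisation of Theorems~\ref{thm:strat} and~\ref{thm:str-struct} obtained by setting $r = n$ and $n_j = j$ for $j = 0,\dots,n$, and then to check that the hypotheses of those theorems are met in this case. First I would note that with this choice the dimension vector $\mathbf n = (1,2,\dots,n)$ is strictly ordered, so Theorem~\ref{thm:strat} applies directly: it produces the decomposition $Q = \coprod_{S,\delta} Q_{(S,\delta)}$, where by Definition~\ref{def:Q} the space $Q = M(\mathbf n)\hkq H$ is exactly the universal hyperk\"ahler implosion for $\SU(n)$, the index $S$ runs over injective subrelations of $\leqslant$ on $\{1,\dots,n-1\}$, and $\delta\colon\dom S \to \Z_{>0}$ runs over functions for which $\mathbf m = \mathbf n - \mathbf d$, with $\mathbf d = \sum_{(i,j)\in S}\delta(i)e_{ij}$, is still an ordered dimension vector.

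Since this is literally ``the situation of Theorem~\ref{thm:strat}'', Theorem~\ref{thm:str-struct} applies without any modification: each stratum $Q_{(S,\delta)}$ is a union of open subsets indexed by $\SUr$, each of which is identified with $\SL(n,\C)\times_{P_S}(\p_S)^\circ_*$, where $P$ is the standard parabolic of $\SL(n,\C)$ attached to the flag $\C^{m_1}\leqslant\dots\leqslant\C^{m_r}=\C^n$ with $m_k = k - d_k$, and $P_S = \{g\in P : \chi(g)\in (T_S)_\C\}$ with $\chi$ as in~\eqref{eq:chi}. It is worth recording that when $S=\varnothing$ (so $\delta$ is empty) one has $\mathbf m = \mathbf n$, $P = B$, $P_S = B$ and $\p_S = \bmf$, so the open stratum $Q^\hks = Q_{(\varnothing,\varnothing)}$ is covered by copies of $\SL(n,\C)\times_N\bmf_*$, in agreement with Proposition~\ref{prop:betasurj2}.

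I do not expect any genuine obstacle here: the content of the corollary is entirely contained in Theorem~\ref{thm:str-struct}, which was already established for an arbitrary strictly ordered $\mathbf n$, so the only ``step'' is to verify that the full flag dimension vector is strictly ordered and that $Q = M\hkq H$ for it is precisely the space singled out in Definition~\ref{def:Q}. If anything requires care, it is only the bookkeeping identifying the parabolic $P$ and its subgroup $P_S$ corresponding to a given pair $(S,\delta)$ through the integers $m_k = k - d_k$, and this is exactly the bookkeeping already carried out in the proof of Theorem~\ref{thm:str-struct} and in the remark preceding it.
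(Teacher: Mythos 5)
Your proposal is correct and matches the paper's treatment: the corollary is stated there without a separate proof, precisely because it is the immediate specialisation of Theorem~\ref{thm:str-struct} to the strictly ordered dimension vector $(1,2,\dots,n)$, for which $Q = M\hkq H$ is the universal hyperk\"ahler implosion by Definition~\ref{def:Q}. Your verification of the hypotheses and the bookkeeping for $P$, $P_S$ and $m_k = k - d_k$ is exactly the content the paper relies on.
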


\begin{remark}
  \label{rem:cs-strata}
  \( \SL(n,\C) \times_{P_S} \p_S^\circ \) can be regarded as the
  complex-symplectic quotient of \( T^*K_\C=K_\C \times \kf_\C \) by
  \( P_S \), where \( K = \SU(n) \) and \( [P,P] \leqslant P_S
  \leqslant P \).  We may compare this, as in
  \S \ref {sec:towards-hyperk-impl}, with symplectic implosion, where the
  universal implosion \( (T^*K)_\impl = K_\C \symp N \) is stratified
  by the ordinary quotients of \( K_\C \) by commutators of
  parabolics.
\end{remark}

\begin{remark}
  \label{rem:Feix}
  We have noted that for general compact \( K \), the space \( K_\C
  \times_{[P,P]} [\p,\p]^\circ \) may be viewed as the cotangent
  bundle of \( K_\C/[P,P] \). Now, from~\cite{Guillemin-JS:implosion}
  the latter quotient is just a K\"ahler stratum of the symplectic
  implosion \( K_\C \symp N \) (cf.~Remark\nobreakspace \ref {rem:cs-strata}).  A theorem of
  Feix~\cite{Feix}, now shows that there is a hyperk\"ahler metric on
  some open neighbourhood of the zero section in the cotangent bundle
  \( K_\C \times_{[P,P]} [\p,\p]^\circ \).  Proposition\nobreakspace \ref {prop:betasurj} gives us a
  hyperk\"ahler structure on the full set \( K_\C \times_{[P,P]}
  [\p,\p]^\circ \) for \( K = \SU(n) \)
\end{remark}

\begin{remark}
  \label{rem:ident}
  Note that in the full flag case the homomorphism \( \chi\colon B \to
  \tT_\C \) defined at~\eqref{eq:chi} is surjective with
  kernel \( N=[B,B] \) and thus allows us to identify \( \tT_\C
  \) naturally with the maximal torus \( T_\C \) of \( K_\C \).
\end{remark}

\bigbreak We would also like to relate the quiver space \( Q = M \hkq
H \) in the full flag case to the non-reductive GIT quotient \(
(\SL(n,\C) \times \bmf) \symp N \), which as discussed in
\S \ref {sec:towards-hyperk-impl} could be interpreted as a
complex-symplectic quotient in the GIT sense of the cotangent bundle
\( T^* \SL(n,\C) \) by the maximal unipotent \( N \).

\begin{lemma}
  \label{lem:codtwo}
  When \( r=n \) and \( n_j = j \) for \( j=0,\dots , n \) the
  complement in the variety defined by the complex moment map
  equations \( \mu_\C=0 \) of the locus of full flag quivers with all
  \( \alpha_i \) injective and \( \beta_i \) surjective has complex
  codimension at least \( 2 \).
\end{lemma}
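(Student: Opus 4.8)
The plan is to bound $\mu_\C^{-1}(0)\setminus U$ by splitting it into two pieces and treating them quite differently. Throughout, $M=M(1,2,\dots,n)$, $H_\C=\prod_{i=1}^{n-1}\SL(i,\C)$, and $U\subset\mu_\C^{-1}(0)$ is the open set where every $\alpha_i$ is injective and every $\beta_i$ surjective. Since $\rank(\beta_j\alpha_j)<j$ whenever $\alpha_j$ fails to be injective or $\beta_j$ to be surjective, we have $\mu_\C^{-1}(0)\setminus U=Z^\alpha\cup Z^\beta$, where $Z^\beta$ is the locus where some $\beta_j$ is not surjective and $Z^\alpha$ the locus where all $\beta_i$ are surjective but some $\alpha_j$ is not injective. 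As a preliminary, $\mu_\C^{-1}(0)$ is a complete intersection in $M$ (one checks $\dim M-\sum_{i=1}^{n-1}(i^2-1)=\sum_{l=2}^{n}l^2$) and, by Proposition~\ref{prop:betasurj2} together with the argument in the proof of Corollary~\ref{cor:dense}, $U$ is a dense smooth open subset; so $\mu_\C^{-1}(0)$ is irreducible of dimension $\sum_{l=2}^{n}l^2=\dim H_\C+\dim\bigl(\SL(n,\C)\times_N\bmf\bigr)$.

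For $Z^\alpha$ there is no difficulty. It lies in the open set $V\subset\mu_\C^{-1}(0)$ where all $\beta_i$ are surjective, on which $H_\C$ acts freely with $V/H_\C\cong\SL(n,\C)\times_N\bmf$ by Proposition~\ref{prop:betasurj}; hence $Z^\alpha/H_\C\cong\SL(n,\C)\times_N(\bmf\setminus\bmf_*)$ with $\bmf_*$ as in Proposition~\ref{prop:betasurj2}, and it is enough to show $\bmf\setminus\bmf_*$ has codimension $\geqslant 2$ in $\bmf$. A refinement of the computation in the proof of Proposition~\ref{prop:betasurj2} identifies $\bmf\setminus\bmf_*$ with $\bigcup_{P'\supsetneq B}[\p',\p']^\circ$, and for a standard parabolic $P'\supsetneq B$ with Levi block sizes $a_1,\dots,a_k$ (so $k\leqslant n-1$ and some $a_l\geqslant 2$, whence $\sum_l a_l^2\geqslant n+2$) one computes $\dim\bmf-\dim[\p',\p']^\circ=\tfrac12\bigl(n+\sum_l a_l^2-2k\bigr)\geqslant\tfrac12\bigl(n+(n+2)-2(n-1)\bigr)=2$. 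So $\dim Z^\alpha\leqslant\dim\mu_\C^{-1}(0)-2$.

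For $Z^\beta$ I would build $\mu_\C^{-1}(0)$ as a tower $\mu_\C^{-1}(0)=M_{\leqslant n-1}\to M_{\leqslant n-2}\to\cdots\to M_{\leqslant 1}$, where $M_{\leqslant j}$ is cut out by the moment map equations involving only $\alpha_1,\beta_1,\dots,\alpha_j,\beta_j$, and the fibre of $M_{\leqslant j}\to M_{\leqslant j-1}$ is the preimage of the affine line $\{\alpha_{j-1}\beta_{j-1}-\lambda I_j:\lambda\in\C\}$ under the multiplication map $(\alpha_j,\beta_j)\mapsto\beta_j\alpha_j$ (generic fibre dimension $j(j+2)$). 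If $\beta_j$ is not surjective then $C_j:=\alpha_{j-1}\beta_{j-1}-\lambda_j^\C I_j$ is singular; over the dense locus in $M_{\leqslant j-1}$ where $\alpha_{j-1}\beta_{j-1}$ has simple spectrum this pins $\lambda_j^\C$ to a finite set (one codimension) and makes $C_j$ of corank $1$, and inside the multiplication fibre over a corank-$1$ matrix the sublocus $\{\beta_j\text{ not surjective}\}$ is a proper closed subset (one more codimension) — so $Z^\beta$ meets this part in codimension $\geqslant 2$. The troublesome case is when $\alpha_{j-1}\beta_{j-1}$ has an eigenvalue of geometric multiplicity $c\geqslant 2$: then $C_j$ can have corank $c$, and the multiplication fibre, and its ``$\beta_j$ not surjective'' sublocus, jump in dimension by order $(c-1)^2/4$. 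This must be outweighed by the codimension in $M_{\leqslant j-1}$ of the locus where $\alpha_{j-1}\beta_{j-1}$ has an eigenvalue of geometric multiplicity $\geqslant c$, which one expects to be of order $c^2$ (it is $c^2-1$ in $\gl(j,\C)$) — comfortably more than $1+(c-1)^2/4$, but establishing this rigorously on $M_{\leqslant j-1}$, where $(\alpha,\beta)\mapsto\alpha_{j-1}\beta_{j-1}$ is not equidimensional, together with the attendant bookkeeping (and an induction on $n$, using irreducibility of the $M_{\leqslant j}$ and the explicit quivers of Proposition~\ref{prop:allorbits} to exhibit simple-spectrum points), is the step I expect to be the main obstacle.

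Combining the two bounds, $\mu_\C^{-1}(0)\setminus U$ has complex codimension at least $2$.
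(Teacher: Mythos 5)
Your proposal has two genuine gaps. The larger one you acknowledge yourself: the whole piece \( Z^\beta \) (some \( \beta_j \) not surjective) is only sketched, and the step you flag as \enquote{the main obstacle} --- controlling the loci where \( \alpha_{j-1}\beta_{j-1} \) has an eigenvalue of geometric multiplicity \( \geqslant 2 \), over a base on which the multiplication map is not equidimensional --- is precisely the hard content of the lemma, so the proof is incomplete there. The paper avoids any stratification by eigenvalue multiplicities: it works with the \( H_\C \)-invariant functions \( \Phi_i=\det(\alpha_{i-1}\beta_{i-1}-\lambda_i^\C I) \), notes that a rank failure at index \( i \) forces \( \Phi_i=0 \) while the \( \lambda_i^\C \) can be varied freely without changing the earlier maps, so that rank failures at two distinct indices impose two independent codimension-one conditions, a rank deficit of two or more at a single index makes \( \Phi_i \) vanish to order at least two, and the remaining case of a single corank-one failure is dealt with by standardising the surjective map and observing that \emph{all} the \( i\times i \) minors of \( \alpha_i \), not just the one equal to \( \Phi_i \), must vanish. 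This one argument covers both your \( Z^\alpha \) and your \( Z^\beta \).

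The second gap is in the part you do claim to complete. Your bound for \( Z^\alpha \) rests on the identification \( \bmf\setminus\bmf_*=\bigcup_{P'\supsetneq B}[\p',\p']^\circ \), but Proposition~\ref{prop:betasurj2} gives only the inclusion \( \bigcup_{P'\supsetneq B}[\p',\p']^\circ\subseteq\bmf\setminus\bmf_* \), and the reverse inclusion --- the one you need --- is false. For \( n=3 \), in the standardised picture take \( \lambda_1^\C=1 \), \( \lambda_2^\C=-1 \) and all three free off-diagonal parameters equal to \( 1 \), so that \( X=\begin{psmallmatrix}0&1&1\\0&1&1\\0&0&0\end{psmallmatrix} \) and \( \alpha_2=\begin{psmallmatrix}1&1\\1&1\\0&0\end{psmallmatrix} \): then \( \alpha_2 \) has rank one, so this point lies in \( \bmf\setminus\bmf_* \), yet neither diagonal \( 2\times2 \) block of \( X \) (equivalently of its trace-free part) is scalar, so \( X \) lies in no \( [\p',\p']^\circ \) with \( P'\supsetneq B \). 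Hence your dimension count for \( \bigcup_{P'}[\p',\p']^\circ \) does not bound \( \bmf\setminus\bmf_* \); the correct codimension-two bound for this locus again comes from the simultaneous vanishing of several minors of \( \alpha_i \), as in the paper's proof. (Two smaller points: the claim that \( \mu_\C^{-1}(0) \) is an irreducible complete intersection is asserted rather than proved --- what the codimension statements need is equidimensionality, which follows from density of \( U \) together with smoothness of \( \mu_\C^{-1}(0) \) along \( U \), but this should be said --- and passing codimension through \( V\to V/H_\C \) needs the remark that the \( H_\C \)-orbits in \( V \) are closed with trivial stabiliser, so all fibres have dimension \( \dim H_\C \).)
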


\begin{proof}
  As in the proof of Corollary\nobreakspace \ref {cor:dense}, we observe that if some \(
  \alpha_i \) or \( \beta_i \) is of less than maximal rank, then the
  \( H_\C \)-invariant function \( \Phi_i \) given by \(
  \det(\alpha_{i-1} \beta_{i-1} - \lambda_i^\C I) \) is zero; moreover
  we may vary the \( \lambda_i^\C \) arbitrarily at each stage
  (without changing \( \alpha_j, \beta_j \) for \( j < i \)), and
  still stay within \( \mu_\C^{-1}(0) \) since the map \( (\alpha,
  \beta) \mapsto \beta \alpha \) is a surjection from \( \Hom(\C^j,
  \C^{j+1}) \oplus \Hom(\C^{j+1},\C^j) \) onto \( \Hom (\C^j,
  \C^j) \) for all \( j \). In particular the the zero locus of \(
  \Phi_i \) is a variety of codimension one. Furthermore, if this
  occurs for two indices \( i \) and \( j \), we can see in the same
  way that both \( \Phi_i \) and \( \Phi_j \) vanish, and this is a
  codimension two condition because \( \Phi_j \) does not vanish
  identically on a non-empty open set in the zero locus of \( \Phi_i
  \).

  Similarly, if for some \( i \) the rank of \( \alpha_i \) or \(
  \beta_i \) is less than \( i-1 \), we deduce that \( \Phi_i \)
  vanishes to order at least two, so we are in codimension two or
  higher.

  So we just have to consider the situation where there is only one
  index \( i \) where \( \alpha_i \) or \( \beta_i \) are of less than
  full rank, and for this index \( (\rank \alpha_i, \rank
  \beta_i)=(i,i-1), (i-1,i) \) or \( (i-1,i-1) \).

  If \( \beta_i \) is of maximal rank, we can put it in the standard
  form of Proposition\nobreakspace \ref {prop:betasurj}, and now \( \alpha_i \) is a \( (i+1)
  \times i \) matrix such that the bottom \( i \times i \) block is \(
  \alpha_{i-1} \beta_{i-1}- \lambda_i^\C I_{i \times i} \). The
  vanishing of \( \Phi_i \) just says that the associated minor
  determinant is zero.  The condition that \( \alpha_i \) is of rank
  \( i-1 \) means also that the other \( i \times i \) minors must
  vanish, so we get a subvariety of codimension at least two in \(
  \mu_\C^{-1}(0) \).  The case \( (i-1,i) \) follows by dualising, and
  the third case \( (i-1,i-1) \) proceeds by a similar calculation
  choosing a standard form for the rank \( i-1 \) map~\( \beta_i \).
\end{proof}

\begin{theorem}
  \label{thm:GITimp}
  The algebra of invariants \( \cO(\SL(n,\C) \times \bmf)^N \) is
  finitely generated, and the hyperk\"ahler quotient \( Q = M \hkq H
  \) of the space \( M \) of full flag quivers by \( H \) can be
  identified with the non-reductive GIT quotient
  \begin{equation*}
    (\SL(n,\C) \times \bmf) \symp N.
  \end{equation*}
\end{theorem}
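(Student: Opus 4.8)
The plan is to identify the coordinate ring of the hyperkähler quotient $Q = M \hkq H$ with the ring of $N$-invariants $\cO(\SL(n,\C) \times \bmf)^N$, and then invoke the codimension estimate to deduce finite generation and the isomorphism of affine varieties. First I would recall from the discussion in \S\ref{sec:hyperk-quiv-diagr} that $Q$ is the GIT quotient $\mu_\C^{-1}(0) \symp H_\C$, so its coordinate ring is $\cO(\mu_\C^{-1}(0))^{H_\C}$, where $H_\C = \prod_{i=1}^{n-1}\SL(i,\C)$. By Lemma~\ref{lem:GITs} and Lemma~\ref{lem:hks}, the full flag quivers with all $\alpha_i$ injective and all $\beta_i$ surjective are $H_\C$-stable and represent points of the open stratum $Q^\hks$. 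Proposition~\ref{prop:betasurj} identifies the locus of quivers with $\beta_i$ surjective, modulo $H_\C$, with $\SL(n,\C) \times_N \bmf$; combining this with Lemma~\ref{lem:codtwo}, which says the complement of the injective/surjective locus in $\mu_\C^{-1}(0)$ has complex codimension at least $2$, I would argue that the inclusion of $\SL(n,\C)\times_N\bmf$ (equivalently, of the $N$-quotient of $\SL(n,\C)\times\bmf$) into $Q^\hks$ is an isomorphism onto an open dense subset.

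Next I would make the ring identification precise. A regular function on $\SL(n,\C)\times\bmf$ that is $N$-invariant pulls back, via the standard form $\beta_i = (0 \mid I)$ and the bijection between such quivers and pairs $(g, X) \in \SL(n,\C)\times_N\bmf$ established around equation~\eqref{eq:abform}, to an $H_\C$-invariant regular function on the injective/surjective locus inside $\mu_\C^{-1}(0)$; conversely, restriction gives a map the other way. Since $Q^\hks$ is normal (it is a smooth hyperkähler manifold by Lemma~\ref{lem:QHKS}, and in fact by Corollary~\ref{cor:dense} it is dense with complement of codimension $\geqslant 2$ in $Q$, while $Q$ itself is an affine GIT quotient hence normal) and the two loci agree away from codimension $\geqslant 2$, Hartogs' extension theorem shows that $\cO(\mu_\C^{-1}(0))^{H_\C}$, $\cO(Q^\hks)$, and $\cO(\SL(n,\C)\times\bmf)^N$ are all naturally isomorphic. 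Finite generation of $\cO(\SL(n,\C)\times\bmf)^N$ then follows because it equals $\cO(Q)$, which is finitely generated: $Q$ is a GIT quotient of the affine variety $\mu_\C^{-1}(0)$ by the reductive group $H_\C$. Finally, both $(\SL(n,\C)\times\bmf)\symp N = \Spec(\cO(\SL(n,\C)\times\bmf)^N)$ and $Q$ are affine varieties with the same coordinate ring, so they are isomorphic.

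The main obstacle is the codimension bookkeeping and making sure the normality argument is airtight: one must check carefully that $Q^\hks$ really does sit inside $Q$ with complement of complex codimension at least $2$ (this is Corollary~\ref{cor:dense}, but one should confirm the strata of $Q$ are all of even real dimension as claimed), and that the open subset of $Q^\hks$ coming from $\SL(n,\C)\times_N\bmf$ via Proposition~\ref{prop:betasurj} also has complement of codimension $\geqslant 2$ — this is exactly the content of Lemma~\ref{lem:codtwo}. A secondary point requiring care is that the GIT quotient $\mu_\C^{-1}(0)\symp H_\C$ has coordinate ring $\cO(\mu_\C^{-1}(0))^{H_\C}$ even though $\mu_\C^{-1}(0)$ may be reducible or non-reduced; but since we only compare rings of regular functions and use normality of the target $Q^\hks$, the Hartogs argument still applies once we know the complements have the stated codimension. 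Everything else is the routine translation, already carried out in \S\ref{sec:structure-strata}, between quiver data in standard form and points of $\SL(n,\C)\times_N\bmf$.
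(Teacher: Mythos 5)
Your proposal uses the same essential ingredients as the paper's proof: Lemma~\ref{lem:codtwo} for the codimension-two estimate, (the proof of) Proposition~\ref{prop:betasurj} to identify the surjective locus with \( \SL(n,\C)\times_N\bmf \) (equivalently \( \mu_\C^{-1}(0)^\surj \cong (\SL(n,\C)\times H_\C)\times_N\bmf \) upstairs), reductivity of \( H_\C \) acting on the affine variety \( \mu_\C^{-1}(0) \) for finite generation, and finally the identification of two affine varieties with the same coordinate ring. The one genuine difference is where you run the extension argument: the paper compares \( H_\C \)-invariant regular functions on \( \mu_\C^{-1}(0) \) with those on its open subset \( \mu_\C^{-1}(0)^\surj \), so the only Hartogs-type step happens upstairs, in exactly the setting of Lemma~\ref{lem:codtwo}; you instead descend to \( Q \) and compare \( \cO(Q) \), \( \cO(Q^\hks) \) and \( \cO(\SL(n,\C)\times_N\bmf) \) via Corollary~\ref{cor:dense}.

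The weak point in your version is the normality bookkeeping on the quotient. The inference \enquote{\( Q \) is an affine GIT quotient hence normal} is not valid in general: a GIT quotient inherits normality from the variety upstairs, and normality of \( \mu_\C^{-1}(0) \) is not established (here or in the paper). Likewise, smoothness of the open piece \( Q^\hks \) gives you nothing about extending functions across \( Q\setminus Q^\hks \); for that you need normality (or at least the \( S_2 \) property) of the ambient space \( Q \), not of the open subset. You also quietly need that the codimension bound of Lemma~\ref{lem:codtwo}, which lives in \( \mu_\C^{-1}(0) \), descends to a codimension bound for the complement of the image of \( \SL(n,\C)\times_N\bmf \) inside \( Q^\hks \); this is plausible because \( H_\C \) acts with trivial stabilisers and closed orbits on the relevant loci (Lemmas~\ref{lem:SL-stabiliser} and~\ref{lem:GITs}), but it is an extra step. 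The paper's formulation sidesteps both issues by never leaving \( \mu_\C^{-1}(0) \): it identifies \( \cO(\mu_\C^{-1}(0))^{H_\C} \) with \( \cO((\SL(n,\C)\times H_\C)\times_N\bmf)^{H_\C} = \cO(\SL(n,\C)\times\bmf)^N \) directly, so the only extension needed is for invariant functions across a codimension-two subset upstairs. If you rephrase your middle step in that form (or supply normality of \( \mu_\C^{-1}(0) \), e.g.\ via Serre's criterion, to justify normality of \( Q \)), your argument coincides with the paper's.
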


\begin{proof}
  The set \( \mu_\C^{-1}(0)^\surj \) of full flag quivers satisfying
  the complex equations and with all \( \beta_i \) surjective is, by
  Lemma\nobreakspace \ref {lem:codtwo}, an open set in \( \mu_\C^{-1}(0) \) whose
  complement is of complex codimension at least two. Moreover, the
  proof of Proposition\nobreakspace \ref {prop:betasurj} shows that \( \mu_\C^{-1}(0)^\surj \)
  may be identified with \( (\SL(n,\C) \times H_\C) \times_N \bmf \).
 
  Therefore the coordinate algebra \( \cO(\mu_\C^{-1}(0))^{H_\C} \) is
  isomorphic to
  \begin{equation*}
    \cO((\SL(n,\C) \times H_\C) \times_N \bmf)^ {H_\C},
  \end{equation*}
  and hence to \( \cO(\SL(n,\C) \times \bmf)^N \).  As \(
  \mu_\C^{-1}(0) \) is an affine variety and \( H_\C \) is reductive,
  this algebra is finitely generated. Moreover
  \begin{equation*}
    Q = \mu_\C^{-1}(0) \symp H_\C 
    = \Spec\cO(\mu_\C^{-1}(0))^{H_\C} =
    \Spec\cO(\SL(n,\C) \times \bmf)^N,
  \end{equation*}
  and the last space is by definition the non-reductive quotient \(
  (\SL(n,\C) \times \bmf) \symp N \) (cf.\ the discussion in
  \S \ref {sec:towards-hyperk-impl}), so the result follows.
\end{proof}

Together with the results of \S \ref {sec:kostant-varieties}, this means
that the complex-symplectic quotients of the implosion will give the
Kostant varieties \( V_\chi \).

\begin{remark}
  \label{rem:scaling}
  The symplectic implosion \( (T^*K)_\impl \) has an action of \( \R^*
  \) induced from multiplication in the fibres of \( T^*K \).
  Similarly, we have a \( \C^* \) action on \( T^* K_\C \) given by \(
  (g, \xi) \mapsto (g, \tau \xi) \). This action commutes with the
  right \( K_\C \) action \eqref{eq:action}, and hence, for each \(
  P_S \), preserves the property of being in the zero level set for
  the complex-symplectic action of \( P_S \) and induces an action on
  the subsets \( K_\C \times_{P_S} \p_S^\circ \). Identifying these
  with the cotangent bundles of \( K_\C/P_S \), this is just scaling
  in the fibre.

  In fact, this extends to an action on the full implosion space \( Q
  \), given by just scaling the \( \beta_i \). Notice that this will
  also scale \( X = \alpha_{n-1} \beta_{n-1} \), so will induce the
  scaling in the fibre of the cotangent bundles above. We can also
  view it as the action on \( (\SL(n,\C) \times \bmf) \symp N \)
  induced by scaling the \( \bmf \) factor.
\end{remark}

\begin{theorem}
  The fixed points of the \( \C^* \) action on the universal
  hyperk\"ahler implosion \( Q \) given by scaling the \( \beta_i \)
  are represented by the quivers with \( \beta_i =0 \) for each \( i
  \).  The fixed point set may therefore be identified, by the
  discussion in \S \ref {sec:symplectic-quivers}, with the universal
  symplectic implosion for \( K = \SU(n) \).
\end{theorem}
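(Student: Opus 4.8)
The plan is to exploit the description \( Q = M \hkq H = \mu_\C^{-1}(0) \symp H_\C \) obtained in the previous sections (here \( M \), \( H \) are as in Definition~\ref{def:Q}, the full flag case \( \mathbf n = (1,2,\dots,n) \), and \( H_\C = \prod_{i=1}^{n-1}\SL(i,\C) \)), together with the identification of the universal symplectic implosion in Theorem~\ref{thm:sympimp}. Writing a quiver as \( (\alpha,\beta) \in M \), the \( \C^* \)-action is \( \tau\cdot(\alpha,\beta) = (\alpha,\tau\beta) \); it merely rescales \( \mu_\C \), hence preserves \( \mu_\C^{-1}(0) \), and it commutes with \( H_\C \), so it descends to~\( Q \).

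First I would show that every \( \C^* \)-fixed point of \( Q \) is represented by a quiver with all \( \beta_i = 0 \). Given \( (\alpha,\beta) \in \mu_\C^{-1}(0) \) representing a fixed point \( q \), form the morphism \( f\colon \C \to Q \) by composing \( \tau \mapsto (\alpha,\tau\beta) \in \mu_\C^{-1}(0) \) with the quotient map \( \mu_\C^{-1}(0) \to Q \); then \( f(0) = [\alpha,0] \). Since \( q \) is fixed, \( f \) agrees on the dense open set \( \C^* \subset \C \) with the constant morphism \( \tau \mapsto q \), so by separatedness of \( Q \) the two morphisms coincide and \( q = f(0) = [\alpha,0] \). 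Because \( \{\beta = 0\} \subset M \) is closed and \( H_\C \)-invariant, it contains the closure of the \( H_\C \)-orbit of \( (\alpha,0) \), and in particular the unique closed orbit therein; hence \( q \) is represented by a polystable quiver with all \( \beta_i = 0 \). The converse is immediate: any quiver with all \( \beta_i = 0 \) lies in \( \mu_\C^{-1}(0) \) (the complex moment map equations then hold trivially) and is plainly \( \C^* \)-fixed.

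Next I would identify the fixed-point set with the symplectic implosion. The subvariety \( \{\beta = 0\} \subset M \) is exactly the space \( R(\mathbf n) \) of full flag symplectic quivers of \S\ref{sec:symplectic-quivers}, and the restriction to it of the \( H_\C \)-action is the \( \SL = \prod_{i=2}^{n-1}\SL(i,\C) \)-action of \S\ref{sec:symplectic-quivers} (the \( i=1 \) factor being trivial), commuting with the residual \( \SL(n,\C) \)-action as before. Since \( H_\C \) is reductive and \( R(\mathbf n) \hookrightarrow \mu_\C^{-1}(0) \) is a closed invariant subvariety, the Reynolds operator shows that restriction of invariants \( \cO(\mu_\C^{-1}(0))^{H_\C} \to \cO(R(\mathbf n))^{H_\C} \) is surjective, so the induced map \( R(\mathbf n) \symp H_\C \to Q \) is a closed immersion. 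By the previous step its image is precisely the \( \C^* \)-fixed locus of \( Q \). Finally \( R(\mathbf n) \symp H_\C = R(\mathbf n) \symp \SL \) is the universal symplectic implosion for \( \SU(n) \) by Theorem~\ref{thm:sympimp}; tracking the residual \( \SL(n,\C) \)-action (and the torus action, cf.\ Remark~\ref{rem:ident}) shows the identification is equivariant.

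I expect the one genuinely delicate step to be the limit argument of the second paragraph: one must resist computing \( \lim_{\tau\to0}\tau\cdot q \) orbit-by-orbit and instead compute it through the explicit morphism \( f \) into the GIT quotient, and one then has to pass from the a priori non-closed representative \( (\alpha,0) \) to a polystable one without leaving \( \{\beta = 0\} \), which is where the closedness and \( H_\C \)-invariance of that locus is used. The remaining ingredients — surjectivity of the Reynolds operator onto a closed invariant subvariety, and the fact that a closed orbit inside a closed invariant set is still closed in the ambient variety — are standard facts of reductive geometric invariant theory.
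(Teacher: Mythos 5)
Your proposal is correct and follows essentially the paper's own route: the heart in both cases is scaling \( \beta \) to \( 0 \) and using closedness of orbits\slash polystability to conclude that a fixed point is represented by a quiver with all \( \beta_i = 0 \), your separatedness argument for the morphism \( \tau \mapsto [(\alpha,\tau\beta)] \) being a careful reformulation of the paper's \( g_\tau\cdot(\alpha,\beta) = (\alpha,\tau\beta) \), \( \tau \to 0 \) limit on a polystable representative. Your third paragraph (Reynolds operator and the closed immersion \( R(\mathbf n) \symp \SL \to Q \)) simply makes explicit the identification that the paper delegates to the discussion of \S\ref{sec:symplectic-quivers} and Theorem\nobreakspace\ref{thm:sympimp}.
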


\begin{proof}
  If \( (\alpha, \beta) \) is fixed by the action, then for all \(
  \tau \in \C^* \)  there exists \( g_\tau \in \SL \) with \(
  g_{\tau}. (\alpha, \beta) = (\alpha, \tau \beta) \). Letting \( \tau
  \rightarrow 0 \), we see that \( (\alpha, 0) \) is in the closure of
  the \( \SL \)-orbit of \( (\alpha, \beta) \), and hence in the same
  orbit by our polystability condition. It now follows that \( \beta=0
  \).
\end{proof}

\begin{proposition}
  If the universal hyperk\"ahler implosion \( Q \) for \( \SU(n) \) is
  smooth, then so is the universal symplectic implosion for \( \SU(n)
  \), and hence \( n \leqslant 2 \).
\end{proposition}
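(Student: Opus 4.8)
The plan is to read off the result from the theorem just proved, which identifies the fixed-point set of the $\C^*$-action on $Q$ given by scaling the $\beta_i$ --- equivalently, by scaling the $\bmf$-factor of $(\SL(n,\C)\times\bmf)\symp N$, using Remark~\ref{rem:scaling} and Theorem~\ref{thm:GITimp} --- with the universal symplectic implosion for $\SU(n)$. Granting this identification, it is enough to prove two things: first, that the fixed-point set of the $\C^*$-action is smooth whenever $Q$ is smooth, which yields the first assertion of the proposition; and second, to recall that the universal symplectic implosion for $\SU(n)$ is smooth only when $n\leqslant 2$, which then forces $n\leqslant 2$.

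For the first step I would argue algebraically. By Theorem~\ref{thm:GITimp} the space $Q=\Spec(\cO(\SL(n,\C)\times\bmf)^N)$ is an affine variety, and the $\C^*$-action is algebraic (it is linear on the $\bmf$-factor and descends to the GIT quotient). It is a standard fact that the fixed locus of a reductive group --- here the torus $\C^*$ --- acting on a smooth variety is smooth: near each fixed point $p$ the action is linearisable on an \'etale neighbourhood (Luna's slice theorem, or for a torus the Bialynicki--Birula decomposition), so such a neighbourhood of $p$ in $Q$ is $\C^*$-equivariantly \'etale over a neighbourhood of $0$ in $T_pQ$ with its induced linear action, whose fixed locus is the linear subspace $(T_pQ)^{\C^*}$; hence $Q^{\C^*}$ is smooth at $p$. (Alternatively one could observe that the unit circle $S^1\subset\C^*$ acts isometrically on the hyperk\"ahler space $Q$ and that the fixed set of an isometric action is totally geodesic, but this route also requires checking that $Q^{S^1}$ coincides with $Q^{\C^*}$, which the algebraic argument sidesteps.) Combining with the theorem just proved, smoothness of $Q$ implies that the universal symplectic implosion for $\SU(n)$ is smooth.

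It then remains to recall the structure of the symplectic implosion. For $n=2$ it is $\C^2$, hence smooth (Example~\ref{ex:SU2}). For $n\geqslant 3$ it is singular: using the Guillemin--Jeffrey--Sjamaar embedding $K_\C\symp N\subset E=\bigoplus_j\wedge^j\C^n$ recalled in \S\ref{sec:symplectic-quivers}, one finds for $n=3$ that the implosion is the affine quadric cone $\{(u,\phi)\in\C^3\oplus(\C^3)^*:\langle\phi,u\rangle=0\}$, which has an isolated singular point at the origin, and in general the symplectic implosion of $\SU(n)$ is singular for every $n\geqslant 3$ \cite{Guillemin-JS:implosion} (see also \S8). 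Therefore $n\leqslant 2$. The only step that is not either routine bookkeeping or an already-known fact about the symplectic implosion is the smoothness of $Q^{\C^*}$, and the main work is simply in quoting (or, if one prefers, reproving via local linearisation of the torus action) that the fixed locus of $\C^*$ on a smooth variety is smooth.
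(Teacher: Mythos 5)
Your argument is correct, and it reaches the conclusion by the same overall route as the paper (fixed-point set of the scaling action equals the symplectic implosion by the preceding theorem, smoothness of \( Q \) forces smoothness of that fixed set, then the Guillemin--Jeffrey--Sjamaar criterion rules out \( n \geqslant 3 \)); the only genuine difference is how the middle step is handled. The paper restricts to the maximal compact subgroup \( S^1 \subset \C^* \), uses the differential-geometric fact that the fixed set of a compact group acting on a smooth manifold is smooth, and then invokes general properties of reductive group actions to identify \( Q^{S^1} \) with \( Q^{\C^*} \); you instead stay with the full \( \C^* \)-action on the affine variety \( Q = \Spec(\cO(\SL(n,\C)\times\bmf)^N) \) of Theorem~\ref{thm:GITimp} and quote the standard fact (Luna slice theorem, or local linearisation of torus actions) that the fixed locus of a linearly reductive group on a smooth variety is smooth. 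Your version buys you exactly what you say: it sidesteps the comparison \( Q^{S^1} = Q^{\C^*} \), at the cost of reading the smoothness hypothesis on \( Q \) in the algebraic category via the GIT model, whereas the paper's version works directly with the hyperk\"ahler manifold structure and needs no algebraicity. The final step is the same in substance: the paper cites \cite[\S6]{Guillemin-JS:implosion} (smoothness iff \( [K,K] \) is a product of copies of \( \SU(2) \)), while you supplement this with the explicit quadric cone \( \{\langle\phi,u\rangle = 0\} \subset \C^3\oplus(\C^3)^* \) for \( n=3 \), which is a correct and pleasant concrete check but not needed beyond the citation.
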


\begin{proof}
  We consider the action of the maximal compact subgroup \( S^1 \) of
  \( \C^* \).  If the hyperk\"ahler implosion were smooth, then the
  fixed point set of this circle action would also be smooth. By
  general properties of reductive group actions, this set is also the
  fixed point set of the \( \C^* \) action, which from above is just
  the universal symplectic implosion.  The result now follows from
  \cite[\S6]{Guillemin-JS:implosion} which tells us that the universal
  symplectic implosion for \( K \) is smooth if and only if the
  commutator \( [K,K] \) is a product of copies of \( \SU(2) \).
\end{proof}

\begin{remark}
  We shall see in \protect \MakeUppercase {E}xample\nobreakspace \ref {ex:SU2} that the universal hyperk\"ahler
  implosion \emph{is} smooth if \( K = \SU(2) \).
\end{remark}

\section{Geometry of the strata and torus reductions}
\label{sec:geom-strata-torus}

Let us now further investigate the geometry of the strata described in
Theorems\nobreakspace \ref {thm:strat} and\nobreakspace  \ref {thm:str-struct} and consider some examples.  We
particularly focus on the stratification of the universal
hyperk\"ahler implosion \( Q \) of \( K = \SU(n) \), where the
original quiver is a full flag, that is, \( r=n \) and \( n_j = j \)
for each \( j \). Of course, the strata will involve quivers that are
not necessarily of full flag type.

For the purposes of considering torus reductions of the implosion, we
need to focus on the \( \tT_\C \)-polystable locus; that is, the locus
where the action of \( \tH_\C = \prod_{i=1}^{r-1} \GL(n_i,\C) \) on
the quivers is polystable.  (Recall from Remark\nobreakspace \ref {rem:ident} that in the
full flag case we have a canonical identification of \( \tT_\C \) with
the maximal torus \( T_\C \) of \( K_\C \)).  Using the results of
\S \ref {sec:hyperk-quiv-diagr} and \S \ref {sec:structure-strata} we can
relate these loci to open subsets of the cotangent bundles \(
\SL(n,\C) \times_{P_S} \p_S^\circ \) of \( \SL(n,\C)/P_S \) where \(
[P,P] \leqslant P_S \leqslant P \) for suitable parabolics \( P \).

In particular, we shall first take the parabolic \( P \) to be the
Borel subgroup \( B \) with \( P_S = [B,B] = N \), and look at the
subset \( \SL(n,\C) \times_N \bmf \) in the implosion, which as we
have seen corresponds to full flag quivers with all \( \beta_i \)
surjective.

In order to analyse the \( N \) action, let us write
\begin{equation*}
  \kf_\C = \tf_\C \oplus
  \bigoplus_{\alpha \in \Delta_{+}} \kf_{\alpha} \oplus \bigoplus_{\alpha
  \in \Delta_{+}} \kf_{-\alpha},
\end{equation*}
where \( \kf_{\alpha} \) are the root spaces and \( \Delta_{+} \) the
set of positive roots.  So \( [\kf_{\alpha}, \kf_{\beta}] \subset
\kf_{\alpha + \beta} \), taking \( \kf_0 \) to be \( \tf_\C \) and \(
\kf_{\gamma} \) to be zero if \( \gamma \) is not a root. Recall also
that \( \kf_{\alpha} \) and \( \kf_{\beta} \) are Killing-orthogonal
if \( \alpha + \beta \neq 0 \).  We are using the pairing \( (A,B)
\mapsto \tr(AB) \) to identify \( \kf \) and \( \kf_\C \) with their
duals.

We can take
\begin{equation*}
  \n = \bigoplus_{\alpha \in \Delta_{+}} \kf_{\alpha}
\end{equation*}
and
\begin{equation*}
  \n^\circ = \bmf = \tf_\C \oplus \bigoplus_{\alpha \in \Delta_{+}}
  \kf_{\alpha}. 
\end{equation*}
Let \( X_{(\alpha)} \) denote the component of \( X \) in \(
\kf_{\alpha} \).  So if \( n \in \n \) and \( X \in \n^\circ = \bmf \)
then \( [n,X]_{(\alpha)} \) is \( [n_{(\alpha)}, X_{(0)}] \) plus
terms \( [n_{(\beta)}, X_{(\gamma)}] \) where \( \beta + \gamma =
\alpha \) and \( \beta, \gamma >0 \).  Moreover \(
[n_{(\alpha)},X_{(0)}] \) is just \( -\alpha(X_{(0)}) n_{(\alpha)} \).
 
The adjoint action of \( \exp(n) \in N \) on \( X \in \kf_{\alpha} \) is
\begin{equation*}
  X \mapsto X + [n,X] + \text{terms in higher  iterated brackets}.
\end{equation*}
So
\begin{equation*}
  (\exp(n) X)_{(0)} = X_{(0)}
\end{equation*}
and
\begin{equation*}
  (\exp(n) X)_{(\alpha)} = X_{(\alpha)} - \alpha(X_{(0)}) n_{(\alpha)}
  + \dotsb 
\end{equation*}
where \( \dotsb \) denotes terms in \( n_{(\beta)}, X_{(\gamma)} \)
with \( \beta \) (respectively \( \gamma \)) ranging over positive
roots less than \( \alpha \) (respectively \( 0 \) or positive roots
less than \( \alpha \)).

This means that, provided \( \alpha \) does not vanish on the Cartan
component of \( X \), we may work up inductively through the root
spaces, starting with the lowest, finding \( n_{(\alpha)} \) such that
\( (\exp(n) X)_{(\alpha)} =0 \). Moreover these \( n_{(\alpha)} \) are
uniquely determined.

So if \( X_{(0)} \) lies in the complement of the union of the zero
loci of the roots then the \( N \)-orbit through \( X \) contains a
unique element in the chosen Cartan algebra \( \tf_\C \). (See
\protect \MakeUppercase {E}xample\nobreakspace \ref {ex:SU3} for a concrete example in the \( \SU(3) \) case).

We see that if the eigenvalues of \( X \in \bmf \) are all distinct,
then the corresponding part of \( \SL(n,\C) \times_N \bmf \) may be
identified with \( \SL(n,\C) \times \tf_\C^\reg \), where \(
\tf_\C^\reg \) is the complement of the union of the zero loci for the
roots in \( \tf_\C \); that is, the set of diagonal matrices in \(
\sln(n,\C) \) with distinct entries. Note that in the setup of
Proposition\nobreakspace \ref {prop:betasurj}, this amounts to standardising the \( \alpha_i \) so
that the only non-zero entries are in position \( (j+1,j) \) for \( j=1,
\dots, i \).  The fact that the eigenvalues of \( X \) are distinct
now implies that all \( \alpha_i \) are injective.

\begin{proposition}
  \label{prop:Cartanreg}
  The implosion \( Q \) contains \( \SL(n,\C) \times \tf_\C^\reg \)
  as an open dense subset.
\end{proposition}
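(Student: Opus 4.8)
The plan is to realize $\SL(n,\C)\times\tf_\C^\reg$ as an explicit locus inside the open stratum $\SL(n,\C)\times_N\bmf$ of $Q$ and then deduce openness and density from the codimension results already available. First I would use Proposition~\ref{prop:betasurj}: in the full flag case the set $\mu_\C^{-1}(0)^\surj$ of quivers satisfying the complex moment map equations for $H$ with all $\beta_i$ surjective is, modulo $H_\C=\prod_{i=1}^{n-1}\SL(i,\C)$, the space $\SL(n,\C)\times_N\bmf$. Since $\mu_\C^{-1}(0)^\surj$ is open in $\mu_\C^{-1}(0)$ and, by Lemma~\ref{lem:GITs}, consists of $H_\C$-stable points, its image in $Q=\mu_\C^{-1}(0)\symp H_\C$ is open; thus $\SL(n,\C)\times_N\bmf$ is an open subset of $Q$. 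Inside $\bmf$ let $V$ denote the $N$-invariant, Zariski-open, dense subset of elements with $n$ distinct eigenvalues, cut out by the non-vanishing of the discriminant of the characteristic polynomial.

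Next I would identify $\SL(n,\C)\times_N V$ with $\SL(n,\C)\times\tf_\C^\reg$, which is essentially the content of the discussion preceding the proposition. For $X\in V$ the $N$-orbit of $X$ under the adjoint (conjugation) action meets $\tf_\C$ in exactly one point, necessarily in $\tf_\C^\reg$; conjugating $X$ to that point by the unique, inductively constructed element of $N$ gives a trivialization $V\cong N\times\tf_\C^\reg$, and since the centralizer in $N$ of a regular diagonal matrix is trivial we obtain $\SL(n,\C)\times_N V\cong\SL(n,\C)\times_N(N\times\tf_\C^\reg)\cong\SL(n,\C)\times\tf_\C^\reg$. As $V$ is open and $N$-invariant in $\bmf$, the subset $\SL(n,\C)\times_N V$ is open in $\SL(n,\C)\times_N\bmf$, hence open in $Q$.

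For density I would argue in two steps. Because $V$ is dense in $\bmf$ and the quotient map $\SL(n,\C)\times\bmf\to\SL(n,\C)\times_N\bmf$ is an open surjection, $\SL(n,\C)\times_N V$ is dense in $\SL(n,\C)\times_N\bmf$. And $\SL(n,\C)\times_N\bmf$ is dense in $Q$, being the image under the surjective map $\mu_\C^{-1}(0)\to Q$ of $\mu_\C^{-1}(0)^\surj$, which contains the dense locus of quivers with all $\alpha_i$ injective and all $\beta_i$ surjective by Lemma~\ref{lem:codtwo}, and the image of a dense set under a surjective continuous map is dense. Transitivity of denseness then completes the proof.

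The step requiring the most care is the openness assertion of the first paragraph, namely that the $H_\C$-invariant open locus $\mu_\C^{-1}(0)^\surj$ has genuinely open --- not just locally closed --- image in the GIT quotient $Q$. This rests on $H_\C$-stability: on the stable locus the GIT quotient is a geometric quotient with open quotient map, and a $G$-invariant subset of the stable locus is automatically saturated, so its image is open. A minor secondary point is that the trivialization $V\cong N\times\tf_\C^\reg$ is an isomorphism of varieties rather than a mere bijection, which holds because both the Cartan component $X_{(0)}$ and the conjugating unipotent element of $N$ depend regularly on $X\in V$, the inductive procedure having denominators that do not vanish on $V$.
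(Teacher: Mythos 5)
Your proposal is correct and follows essentially the same route as the paper: identify the surjective-$\beta$ locus with $\SL(n,\C)\times_N\bmf$ via Proposition~\ref{prop:betasurj} (with Lemma~\ref{lem:GITs} guaranteeing stability, hence openness in $Q$), conjugate elements of $\bmf$ with regular Cartan part uniquely into $\tf_\C$ by the inductive $N$-action through the root spaces, and get density from the codimension statement of Lemma~\ref{lem:codtwo}. You merely make explicit the openness and density bookkeeping that the paper leaves to the surrounding discussion.
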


\begin{remark}
  Notice that (for \( K = \SU(n) \)) \( K_\C \times \tf_\C^\reg \) is
  in this sense a complex-symplectic analogue of the product of \( K
  \) with the interior of the Weyl chamber, which is the open stratum
  in the symplectic implosion of \( T^*K \). However in our case \(
  \SL(n,\C) \times \tf_\C^\reg \) is a proper subset of \( \SL(n,\C)
  \times_N \bmf_{*} \), and hence of the open hyperk\"ahler stratum,
  because configurations with equal eigenvalues may occur in \(
  Q^\hks \).
\end{remark}

Let us now look at strata corresponding to more general parabolics.
Our discussion of the diagonal entries of \( X \) in
Proposition\nobreakspace \ref {prop:betasurj} shows that the eigenvalue \( \kappa_i \) of the
trace-free part of \( X \) occurs at least \( k_{i-1} \) times (\(
i=1, \dots, r \) with the convention that \( \kappa_r =0 \)). We say
\enquote{at least} because if \( \lambda^\C_i + \lambda_{i+1}^\C +
\dots \lambda^\C_j \) is zero for \( i\leqslant j \) then \( \kappa_i
\) and \(\kappa_{j+1} \) will be equal.

Hence, given a collection of eigenvalues \( \kappa_j \) for~\( X \),
the maximum value of \( k_{i-1} \) compatible with this collection is
the multiplicity of the corresponding~\( \kappa_i \).

\begin{example}
  \label{ex:regss}
  In particular if the eigenvalues of~\( X \) are all distinct, then
  the \( k_i \) are all~\( 1 \) and we are in the full flag case where
  \( n_i = i \) for all~\( i \).  This means we are in \( \SL(n,\C)
  \times \tf_\C^\reg \), which is open and dense in \( \SL(n,\C)
  \times_N \bmf \).

  Performing hyperk\"ahler reduction by the maximal torus \( T \) is
  equivalent to fixing the \( \tf_\C \)-component of \( \bmf \), i.e.\
  fixing \( X_{(0)} \), and quotienting by \( T_\C \). If we reduce at
  a level in \( \tf_\C^\reg \) then the above discussion shows the
  resulting space is the semisimple orbit \( \SL(n,\C)/T_\C \).
\end{example}

Now let us consider reduction at a level \( X_{(0)} \) where a root
vanishes. If \( \alpha(X_{(0)}) \neq 0 \), then as above we can set \(
(\exp(n) X)_{(\alpha)} \) to zero. That is, we can use the \( N \)
action to move \( X \) into the subalgebra \( \tf_\C \oplus \n_1 \),
where
\begin{equation*}
  {\n_1} = \bigoplus_{\substack{\alpha \in \Delta_+ \\
  \alpha(X_{(0)})=0}} \kf_\alpha.
\end{equation*}
We have left a residual action of \( N_1 \), the unipotent group with Lie
algebra \( \n_1 \).

\begin{example}
  \label{ex:ss}
  In particular, suppose \( X \) has distinct eigenvalues \( \sigma_1,
  \dots, \sigma_s \) with multiplicities \( m_1, \dots, m_s \) and let
  us take the maximum possible \( k_i \) compatible with this: that
  is, we take \( r = s \) and \( k_i = m_{i+1} \), \( i = 0, \dots,
  r-1 \). Let \( P \) denote the associated parabolic, whose
  commutator contains the maximal unipotent \( N \). As we are
  concerned with the quotients of the strata by the full torus \(
  \tT_\C \) we shall assume in the following discussion that \(
  P_S = [P,P] \); this will not result in any loss of generality as
  regards the quotient, as \( P_S \) is an extension of \( [P,P] \) by
  a subtorus of \( \tT_\C \).

  We can now use part of the \( N \) action to kill the entries of \(
  X \) in positions \( (i,j) \) where \( X_{ii} \neq X_{jj} \).  But
  our choice of \( k_i \) means the other non-diagonal elements of \(
  X \) are zero, as \( X \) has to be in the annihilator \( [\p,
  \p]^\circ \). The remaining part of \( [P,P] \) that we have not
  used is just \( \prod_{i=1}^{r-1}\SL(k_i,\C) \).

  So we obtain \( \SL(n ,\C) \times \tf_\C^{(\sigma, m)} /
  \prod_{i=1}^{r-1}\SL(k_i,\C) \), where \( \tf_\C^{(\sigma, m)} \)
  denotes the subset of \( \tf_\C \) satisfying the above equalities
  of eigenvalues. Hyperk\"ahler reduction now fixes the diagonal
  entries and quotients by~\( T_\C \), hence we obtain the general
  semisimple orbit
  \begin{equation*}
    \SL(n,\C)/{\Lie S}\bigl(\prod_{i=1}^{r-1}\GL(k_i,\C)\bigr)
  \end{equation*}
  Thus we obtain the semisimple orbits (the closed stratum of the
  Kostant variety) by choosing the stratum where the \( k_i \) are the
  maximum possible given the eigenvalues. The case of distinct
  eigenvalues, where all \( k_i \) must be~\( 1 \), gives the regular
  semisimple orbit as in the preceding example.
\end{example}

In general we have that each eigenvalue multiplicity \( m_i \), \( i =
1,\dots,s \), is a sum of \( k_j \), say \( k_{i_1}+ \dots +k_{i_p}
\).  We can use part of the \( [P,P] \) action to reduce \( X \) to
block-diagonal form where we have one block for each distinct
eigenvalue \( \sigma_i \).  Moreover each block is upper triangular
with diagonal entries all equal to the eigenvalue \( \sigma_i \).

The remaining freedom in \( [P,P] \) is now also block-diagonal: each
block is the commutator of a parabolic \( P_{\sigma_i} \) in \(
\GL(m_i,\C) \).  Let us write \( P_{\sigma_i} \) as \( U_{\sigma_i}
L_{\sigma_i} \) where \( U_{\sigma_i} \) is the unipotent radical of
\( P_{\sigma_i} \) and \( L_{\sigma_i} \) is the corresponding Levi
subgroup.

The condition that \( X \) lies in \( [\p,\p]^\circ \) means that, for
each block, (transposing and dualising), the non-scalar part of that
block of \( X \) actually lies in \( \un_{\sigma_i} \). The scalar
part of the block is of course just \( \sigma_i I_{m_i \times m_i} \).

The previous example is the case when \( r=s \) and \( k_i = m_{i+1}
\), so we have one \( k_i \) for each distinct eigenvalue.  Now \(
P_{\sigma_i} = \GL(m_i,\C) \) so the unipotent \( U_{\sigma_i} \) is
scalar and hence \( X \) is scalar on each block. Moreover the
remaining freedom in \( [P,P] \) is just the product of the Levi
subgroups \( L_{\sigma_i} = \SL(k_{i-1},\C) \), in agreement with the
results of that example.

\bigbreak For some concrete low-dimensional examples consider the
following.

\begin{example}
  \label{ex:SU2}
  Let us take \( K = \SU(2) \).

  (i) The quivers we have to consider are now of the form
  \begin{equation*}
    \C \stackrel[\beta]{\alpha}{\rightleftarrows} \C^2
  \end{equation*}
  so we have \( M= \HH^2 = \Hom (\C,\C^2) \oplus \Hom(\C^2,\C)
  \). In the terminology of \S \ref {sec:hyperk-quiv-diagr} the group \(
  \tH \) is \( \Un(1) \) but its commutator \( H \) and the associated
  complex group \( \SL \) are the trivial groups \( \SU(1) \) and \(
  \SL(1,\C) \).  There is therefore no moment map equation and no
  stability condition for these groups, and the implosion \( Q \) is
  just \( \HH^2 \). We have a hyperk\"ahler action of the torus \( T =
  \Un(1) \).
  
  We can decompose the implosion into 4 subsets.  The smallest one \(
  {\mathcal S}_{\textup{bottom}} \) is when \( \alpha, \beta \) are
  both zero, so \( \C = \ker \alpha \oplus \im \beta \). At the other
  extreme, we let \( {\mathcal S}_0 \) be the set of quivers when \(
  \alpha \) is injective and \( \beta \) surjective, so again this
  direct sum condition holds.  This subset is \( (\C^2 \setminus \{ 0
  \}) \times (\C^2 \setminus \{ 0 \}) \).  We also have a subset \(
  {\mathcal S}_1 \) with \( \beta \) surjective but \( \alpha \) not
  injective (so zero), and a subset \( {\mathcal S}_2 \) with \(
  \alpha \) injective and \( \beta \) not surjective (so zero). Both
  these subsets are isomorphic to \( \C^2 \setminus \{ 0 \} \).

  We observe that \( {\mathcal S}_0 \) and \( {\mathcal
  S}_{\textup{bottom}} \) are sets of the form described in
  Proposition\nobreakspace \ref {prop:betasurj2}, corresponding to choosing the parabolic \( P
  \) to be the Borel \( B \) and the full group \( \SL(2,\C) \),
  respectively.  To see this, we can take the unipotent subgroup \( N
  \) of \( K_\C \) to be the group of upper triangular matrices
  \begin{equation*}
    N = \left\{
      \begin{pmatrix}
        1 & t \\
        0 & 1
      \end{pmatrix}
      : t \in \C \right\}.
  \end{equation*}
  We must consider \( \SL(2,\C) \times \n^\circ \), where
  \begin{equation*}
    \n^\circ = \bmf = \left\{
      \begin{pmatrix}
        a  &  b \\
        0 & -a
      \end{pmatrix}
      : a, b \in \C \right\}.
  \end{equation*}
  Writing an element of \( \SL(2,\C) \) as \(
  \begin{psmallmatrix}
    q_{11}  &  q_{12} \\
    q_{21} & q_{22}
  \end{psmallmatrix} \), the \( N \) action is
  \begin{align*}
    q_{12} & \mapsto  q_{12} - t q_{11} \\
    q_{22} & \mapsto  q_{22} -t q_{21}  \\
    b & \mapsto b - 2at
  \end{align*}
  while \( q_{11}, q_{21}, a \) are invariant. Also \( q_{11}q_{22} -
  q_{12} q_{21} =1 \).

  According to the discussion in \S \ref {sec:structure-strata}, \(
  {\mathcal S}_0 \) will be just the open dense set \( \SL(2,\C)
  \times_N \bmf_* \) in \( \SL(2,\C) \times_N \bmf \), where \( X \in
  \bmf \) is non-zero (that is, \( a,b \) are not both zero).

  If \( a \) is non-zero then taking the \( N \) quotient is
  equivalent to setting \( b =0 \). We obtain a set \( \SL(2,\C)
  \times (\C \setminus \{0\}) \), which may be identified with \(
  (\C^2 \setminus \{0 \}) \times \C \times (\C \setminus \{0\}) \)
  (viewing the \( \C^2 \setminus \{0 \} \) factor as the first column
  of the matrix in \( \SL(2,\C)) \).

  If \( a \) is zero then the \( N \) action on the \( \bmf \) factor
  is trivial, and we obtain \( (\SL(2,\C) / N) \times \{ X \in \bmf :
  a=0, \ b \neq 0 \} \), which is just \( (\C^2 \setminus \{0 \})
  \times (\C \setminus \{0 \}) \).  Identification of \( (\SL(2,\C) /
  N \) with \( \C^2 \setminus \{0 \} \) follows, for example, from the
  Iwasawa decomposition---this space is of course the open stratum of
  the symplectic implosion for \( \SU(2) \)).

  The whole of \( {\mathcal S}_0 \), then, may be viewed as \( (\C^2
  \setminus \{0 \}) \times (\C^2 \setminus \{0 \}) \) in accordance
  with the quiver picture. On the other hand \( {\mathcal
  S}_{\textup{bottom}} \) has \( X=0 \) so is just \(
  \SL(2,\C)/{\SL(2,\C)} \); that is, a point, again agreeing with the
  quiver description above.

  Note that if instead we took the whole of \( \SL(2,\C) \times_N
  \bmf \), rather than requiring \( X \neq 0 \), then we would obtain
  \( (\C^2 \setminus \{0\}) \times \C^2 \). This of course corresponds
  to the union of the two sets \( {\mathcal S}_0 \) and \( {\mathcal
  S}_1 \), that is, quivers with \( \beta \) surjective, in accordance
  with Proposition\nobreakspace \ref {prop:betasurj}.

  In terms of the hyperk\"ahler stratification in
  \S \ref {sec:strat-quiv-diagr}, the open stratum is the union \( \HH^2
  \setminus \{ 0 \} \) of \( {\mathcal S}_0,{\mathcal S}_1 \) and \(
  {\mathcal S}_2 \), which can also be viewed as the \( \SUr \) sweep
  of \( {\mathcal S}_0 \). The closed stratum is just \( {\mathcal
  S}_{\textup{bottom}} \), which is the origin. In the language of
  Proposition\nobreakspace \ref {prop:mod-embed}, the open stratum corresponds to taking \( S
  \) empty, so there is no \( \delta \), while the closed stratum
  corresponds to taking \( S = \{(1,1)\} \) with \( p = h = k = d_1 =
  1 \) and \( 0 = m_0 = m_1 < m_2 =2 \).

  From the point of view of reductions by the torus \( \Un(1) \) the
  relevant sets are \( {\mathcal S}_{\textup{bottom}} \) and \(
  {\mathcal S}_0 \) because these give the closed orbits for the
  complexified action on \( \HH^2 \), in agreement with
  Proposition\nobreakspace \ref {prop:KobS}. The union of these strata can be viewed as the set
  of pairs \( (z,w) \) in \( \C^2 \times \C^2 \) such that \( z,w \)
  are both zero or both non-zero.

  The hyperk\"ahler quotient by \( \Un(1) \) gives us the
  hyperk\"ahler structure on Kostant varieties of \( \SL(2,\C) \)
  (i.e.\ Eguchi-Hanson or the nilpotent variety) as explained below.

  \bigbreak (ii) It is also instructive, in the light of
  Theorem\nobreakspace \ref {thm:GITimp}, to consider the GIT quotient \( (\SL(2,\C)
  \times \bmf) \symp N \). Using the variables above, the invariant
  polynomials are generated by \( q_{11}, q_{21}, a \) and \( y_1 =
  2q_{22}a - q_{21}b, y_2= 2q_{12}a - q_{11}b \).  We have the
  relation
  \begin{equation}
    \label{eq:su2}
    q_{11} y_1 - q_{21} y_2 =2a,
  \end{equation}
  so \( (\SL(2,\C) \times \bmf ) \symp N \) is the affine
  hypersurface in \( \C^5 \) with equation \eqref{eq:su2}.  Now
  projection onto \( (q_{11}, q_{21}, y_1, y_2) \) gives an
  isomorphism with \( \C^4 = \HH^2 \).

  The \( \C^* \)-action of Remark\nobreakspace \ref {rem:scaling} is just scaling of \(
  a,b \), hence of \( a,y_1, y_2 \). Its fixed-point set on \( \C^4 =
  \HH^2 \) is just given by \( y=0 \) and hence is a copy of \( \C^2
  \), the symplectic implosion of \( T^*\SU(2) \).

  \medbreak The \( T_\C \) action is \( q_{i1} \mapsto s^{-1} q_{i1}
  \), \( q_{i2} \mapsto s q_{i2} \), \( b \mapsto s^2 b \) while \( a
  \) is invariant. So \( y_i \mapsto s y_i \), and equation
  \eqref{eq:su2} is preserved.

  Under the above identification with \( \C^4 \), the coordinates \(
  q_{11}, q_{21} \) scale by \( s^{-1} \) and \( y_1, y_2 \) by \( s
  \).  Note that the non-closed orbits for the \( T_\C \) action are
  therefore those lying in \( q_{11} = q_{12}=0 \) and \( y_1 = y_2=0
  \), apart from the origin which is a closed point orbit.

  Now, it is well-known that the hyperk\"ahler reduction of flat \(
  \HH^2 \) by \( \Un(1) \) at the generic level gives the
  Eguchi-Hanson metric on the semisimple orbit of \( \SL(2,\C) \). In
  terms of our variables, making the reduction is equivalent to fixing
  the value of \( a \) (i.e.\ the value of the complex-symplectic
  moment map for \( T_\C \)), and then quotienting by~\( T_\C
  \). Taking as invariant polynomials on \( \C^4 \) the expressions \(
  W = q_{11} y_1, Y = q_{11}y_2, Z= q_{21}y_1 \) and \( q_{21}y_2 \)
  (which equals \( W-2a \)), we obtain the affine surface \(
  W(W-2a)=YZ \), which is one of the complex structures for
  Eguchi-Hanson.  If we reduce \( \HH^2 \) by \( \Un(1) \) at level \(
  0 \), of course, we get the nilpotent variety \( W^2 = YZ \) of \(
  \SL(2,\C) \) with singularity at the origin.
\end{example}

\begin{example}
  \label{ex:SU3}
  Let us take \( K= \SU(3) \). From the quiver picture, we see the
  implosion is a hyperk\"ahler quotient of \( \HH^8 \) by \( \SU(2)
  \); in fact it may be realised as the Swann bundle (with origin
  adjoined) of the quaternionic K\"ahler manifold \(
  \widetilde{\operatorname{Gr}}_4 (\R^8) \) of oriented 4-planes in
  \( \R^8 \).  Note that this space is not smooth but has a conical
  singularity at the origin.

  The \( N \) action on \( \bmf = \n^\circ \) is:
  \begin{equation*}
    \begin{pmatrix}
      1 & r & s \\
      0 & 1 & t \\
      0 & 0 & 1
    \end{pmatrix}
    \colon
    \begin{pmatrix}
      a & b & c \\
      0 & d & e \\
      0 & 0 & f
    \end{pmatrix}\\
    \longmapsto
    \begin{pmatrix}
      a & b + r(d-a) & \left(\substack{c + rt(a-d) -tb \\
        + re + s(f-a)}\right) \\[1ex]
      0 & d          & e + t (f-d)                   \\
      0 & 0 & f
    \end{pmatrix}
  \end{equation*}
  where \( a+d+f=0 \), of course.

  Let us first take \( a,d,f \) distinct, so we must be in the open
  stratum.  As we are on the complement of the union of zero loci of
  the roots \( a-d, d-f, f-a \), taking the quotient by \( N \) is
  equivalent to setting \( b=c=e=0 \). So we obtain, as in
  Proposition\nobreakspace \ref {prop:Cartanreg}, a set which can be identified with \(
  \SL(3,\C) \times \tf_\C^\reg \).

  The hyperk\"ahler reduction at such a level by the action of the
  maximal torus \( T \) will be just the complex-symplectic quotient
  by the complex torus \( T_\C \).  This will be obtained by fixing
  the value of \( (a,d,f) \) and then factoring out by \( T_\C \), so
  we get \( \SL(3,\C)/T_\C \) which is a semisimple orbit for \(
  \SL(3,\C) \).

  \medbreak Now consider the case of eigenvalues \( (a,a,-2a) \) with
  \( a \neq 0 \), i.e.\ when \( a=d \neq f \).  We can obtain this
  configuration by reducing at level \( (\lambda_1^\C, \lambda_2^\C) =
  (3a,0) \) in the stratum \( Q_{(\{2,2 \},1)} \), which has \( k_0 =
  1, k_1 = 0, k_2 = 2 \), i.e.\ \( n_1 = n_2 = 1, n_3 = 3 \). So the
  parabolic \( P \) consists of matrices of the form
  \begin{equation*}
    \begin{psmallmatrix}
      * & *  & * \\
      * & *  & * \\
      0 & 0 & *
    \end{psmallmatrix}.
  \end{equation*}

  As above, we see that each \( N \)-orbit contains an element \( X
  \in \sln(3,\C) \) with \( c=e=0 \). Moreover \( b=0 \) because \( X
  \) must lie in \( [\p, \p]^\circ \) (after suitable dual
  identifications), so again we get diagonal \( X \). The remaining \(
  [P,P] \) action is that of \( \SL(2,\C) \times \{1\} \), and the
  hyperk\"ahler reduction is, as in \protect \MakeUppercase {E}xample\nobreakspace \ref {ex:ss}, the non-regular
  semisimple orbit \( \SL(3,\C)/{\Lie S(\GL(2,\C)} \times \GL(1,\C))
  \)---the closed stratum of the Kostant variety for \( (a,a-2a) \).

  The level \( (a,a,-2a) \) is also compatible with the stratum with
  \( k_0 = k_1 = k_2=1 \), and hence \( n_1=1, n_2=2, n_3=3 \): this
  is the open stratum.  Now we can make \( c=e=0 \) but \( b \) may be
  non-zero, and the residual freedom in \( N \) consists of
  block-diagonal matrices in \( N \) of block size 2 and 1.  This will
  be the open stratum of the Kostant variety for \( (a,a,-2a) \), and
  of course is not semi-simple.

  \medbreak Finally, let us consider the level \( (0,0,0) \). This is
  compatible (using Remark\nobreakspace \ref {rem:lambdaconstr}) with three strata:
  \begin{inparaenum}
  \item the open stratum with \( k_0 = k_1 = k_2 =1 \);
  \item the stratum with \( k_0 =0,k_1=1, k_2=2 \);
  \item the closed stratum with \( k_0 =k_1=0, k_2=3 \) corresponding
    to the point quiver.
  \end{inparaenum}

  On torus reduction, we will obtain the corresponding strata of the
  nilpotent variety: respectively, these are the regular stratum
  (minimum polynomial \( x^3 \)), the subregular stratum (minimum
  polynomial \( x^2 \)), and the zero element, i.e.\ the semisimple
  stratum.
\end{example}

\end{document}